\newcommand{\SSS}{\mathfrak{S}}
\newcommand{\BB}{\mathfrak{B}}
\newcommand{\DD}{\mathfrak{D}}
\newcommand{\AAA}{\mathcal{A}}
\newcommand{\id}{\mathrm{id}}
\newcommand{\bs}{\mathbf{s}}
\newcommand{\bp}{\mathbf{p}}
\newcommand{\lh}{\mathcal{LH}}
\newcommand{\var}{\mathrm{var}}
\newcommand{\bkt}[1]{\langle #1 \rangle}
\newcommand{\ol}[1]{\overline{#1}}
\newcommand{\blue}[1]{\textcolor{blue}{#1}}
\newcommand{\ZZ}{\mathbb{Z}}
\DeclareMathOperator{\drp}{drops}
\DeclareMathOperator{\mad}{MAD}
\DeclareMathOperator{\motz}{Motz}
\DeclareMathOperator{\drops}{drops}
\DeclareMathOperator{\dep}{depth}
\DeclareMathOperator{\Pet}{Pet-GP}
\DeclareMathOperator{\depth}{depth}
\DeclareMathOperator{\DescSet}{DescSet}
\DeclareMathOperator{\DescSetD}{DescSet_D}
\DeclareMathOperator{\ExcS}{ExcSet}
\DeclareMathOperator{\exc}{exc}
\DeclareMathOperator{\des}{des}
\DeclareMathOperator{\lexc}{ExcLen}
\DeclareMathOperator{\fix}{FIX}
\DeclareMathOperator{\rc}{RC}
\DeclareMathOperator{\inv}{inv}
\DeclareMathOperator{\SgnDrp}{SgnDrop}
\DeclareMathOperator{\SgnMult}{SgnExcDepDrop}
\DeclareMathOperator{\rd}{canon\_reduc}
\DeclareMathOperator{\asc}{AscSet}
\DeclareMathOperator{\ird}{indx\_canon\_reduc}
\DeclareMathOperator{\nml}{nml}
\DeclareMathOperator{\Negs}{Negs}
\DeclareMathOperator{\nsum}{NegSum}
\DeclareMathOperator{\inva}{inv_A}
\DeclareMathOperator{\invb}{inv_B}
\DeclareMathOperator{\invd}{inv_D}
\DeclareMathOperator{\DescSetB}{DescSet_B}
\DeclareMathOperator{\drpb}{drops_B}
\DeclareMathOperator{\SgnDrpB}{SgnDrop_B}
\DeclareMathOperator{\Sdropd}{SgnDrop_D}
\DeclareMathOperator{\zdrops}{dropz}
\DeclareMathOperator{\iexc}{iexc}
\theoremstyle{plain}
\newtheorem{theorem}{Theorem}
\newtheorem{corollary}[theorem]{Corollary}
\newtheorem{lemma}[theorem]{Lemma}
\newtheorem{proposition}[theorem]{Proposition}
\theoremstyle{definition}
\newtheorem{definition}[theorem]{Definition}
\newtheorem{example}[theorem]{Example}
\newtheorem{remark}[theorem]{Remark}
\title{Canonical reduced words and signed descent length enumeration in Coxeter groups}
\author[1]{Umesh Shankar\thanks{\tt{204093001@iitb.ac.in, umeshshankar@outlook.com}}}
\author[2]{Sivaramakrishnan Sivasubramanian\thanks{\tt{krishnan@math.iitb.ac.in}}}
\affil[1,2]{Department of Mathematics, Indian Institute of Technology, Bombay Mumbai 400076, India}
\date{\today}
\begin{document}
\maketitle
\begin{abstract}
Reifegerste and independently, Petersen and Tenner studied a statistic
$\drops$ on permutations in $\mathfrak{S}_n$.
Two other 
studied statistics on $\SSS_n$ are $\dep$ and $\exc$.  
Using descents in {\em canonical reduced words} of elements in $\mathfrak{S}_n$, 
we give an involution $f_A: \mathfrak{S}_n \mapsto \mathfrak{S}_n$ that leads to
a neat formula for the signed trivariate enumerator of 
$\drops, \dep, \exc$ in $\SSS_n$.  This gives 
a simple formula for the signed univariate drops enumerator in $\mathfrak{S}_n$.  
For the type-B Coxeter group $\mathfrak{B}_n$, using similar techniques, 
we show analogous univariate results.  For the type D Coxeter group, we get
analogous but inductive univariate results.  

Under the famous Foata-Zeilberger bijection $\phi_{FZ}$ which takes 
permutations to restricted Laguerre histories, we show that 
permutations $\pi$ and $f_A(\pi)$ map to the same Motzkin path, but 
have different history components. Using the Foata-Zeilberger 
bijection, we also get a continued fraction for the generating function
enumerating the pair of statistics $\mathrm{drops}$ and $\mathrm{MAD}$.
Graham and Diaconis determined the mean and the variance of  
the Spearman metric of disarray
$D(\pi)$ when one samples $\pi$ from $\mathfrak{S}_n$ at random.
As an application of our 
results, we get the
mean and variance  of the statistic $\mathrm{drops}(\pi)$ when we sample 
$\pi$ from $\mathcal{A}_n$ at random.
\end{abstract}

\section{Introduction}

Let $\SSS_n$ be the set of permutations on $[n] = \{1,2,\ldots,n\}$.  While studying
an optimization problem motivated by a sorting algorithm called {\it 
straight selection sort}, Petersen and Tenner in 
\cite{petersen-tenner-depth} came up with the following statistic 
they call {\it depth} for elements of $\SSS_n$ and $\BB_n$, the
hyperoctahedral group.  For the permutation 
group $\SSS_n$, they gave a simple alternate expression for 
depth.  Define the {\it excedance set} of 
$\pi = \pi_1, \pi_2, \ldots, \pi_n \in \SSS_n$ as 
$\ExcS(\pi) = \{ i \in [n-1]: \pi_i > i\}$.  Define 
$\depth(\pi) = \sum_{i \in \ExcS(\pi)} (\pi_i - i)$.  

 Bagno, Biagioli, Novick and Woo in 
\cite{bagno-biagioli-novick-woo-depth-coxeter-groups} extended the
notion of depth to all Coxeter groups.
A closely related statistic on $\SSS_n$ was defined by Graham and Diaconis
in \cite{graham-diaconis-spearman}.  For 
$\pi = \pi_1, \pi_2, \ldots, \pi_n \in \SSS_n$, 
Graham and Diaconis defined $s(\pi) = \sum_{i=1}^n |\pi_i-i|$
to be its Spearman disarray measure.  Knuth in 
\cite[Problem 5.1.1.28]{knuth-taocp2} 
calls $s(\pi)$ as the ``total 
displacement'' of $\pi$.  It is clear that for all $\pi \in \SSS_n$, 
$s(\pi)= 2 \cdot \depth(\pi)$.  Though a generating function 
for $\depth$ does not seem to have a simple form, Petersen and 
Guay-Paquet 
\cite{petersen-guay-paquet-displacement} expressed the 
generating function for $\depth$ as a continued fraction.  

Define a very similar 
statistic {\it descent drops}, denoted $\drp(\pi)$ as follows.  Firstly,
for $\pi = \pi_1, \pi_2, \ldots, \pi_n$,
define $\DescSet(\pi) = \{i \in [n-1]: \pi_i > \pi_{i+1} \}$.  Define 
\begin{equation}
  \label{eqn:drop_defn}
  \drp(\pi) = \sum_{i \in \DescSet(\pi)} (\pi_i - \pi_{i+1}).
\end{equation}

Both these statistics have been studied by Reifegerste 
in \cite{reifegerste-bi-incr}.
For $\pi \in \SSS_n$, define $\exc(\pi) = |\ExcS(\pi)|$ and 
$\des(\pi) = |\DescSet(\pi)|$.  
Reifegerste in \cite{reifegerste-bi-incr} and Petersen and Tenner 
independently in \cite{petersen-tenner-depth}, showed that the distribution of the 
ordered pairs $(\depth(\pi), \exc(\pi))$ and $(\drp(\pi), \des(\pi))$
coincide when summed over $\SSS_n$.  Their result 
\cite[Proposition 1.1]{reifegerste-bi-incr} and 
\cite[Theorem 1.3]{petersen-tenner-depth} is as follows.

\begin{theorem}[Reifegerste, Petersen and Tenner]
  \label{thm:biv_distrib}
  When $n \geq 1$, we have 
  $$\sum_{\pi \in \SSS_n} q^{\depth(\pi)}t^{\exc(\pi)} = 
  \sum_{\pi \in \SSS_n} q^{\drp(\pi)}t^{\des(\pi)}$$
\end{theorem}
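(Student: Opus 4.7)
The natural strategy is bijective: I would construct an explicit map $\Psi: \SSS_n \to \SSS_n$ that sends the pair $(\drp, \des)$ on $\sigma$ to the pair $(\depth, \exc)$ on $\Psi(\sigma)$. This immediately yields the theorem term by term. A variant of Foata's first fundamental transformation is the natural candidate, since it classically exchanges $\des$ and $\exc$; what is new beyond this classical fact is the size-preserving matching of drop sizes to excedance sizes.

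The map $\Psi$ I would use is the following. Given $\sigma \in \SSS_n$ in one-line notation, parse from left to right and open a new block whenever a left-to-right maximum occurs, writing $\sigma = B_1 \mid B_2 \mid \cdots \mid B_k$ with each block headed by its LR max. Reverse each block and read it as a cycle of $\pi := \Psi(\sigma)$: a block $(\sigma_a, \sigma_{a+1}, \ldots, \sigma_b)$ with $\sigma_a$ the leading LR max becomes the cycle defined by $\pi(\sigma_{l+1}) = \sigma_l$ for $a \le l < b$, together with $\pi(\sigma_a) = \sigma_b$. The inverse is transparent: write $\pi$ in cycle notation with each cycle started from its maximum, sort cycles in increasing order of these maxima, and concatenate.

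The core step is the following correspondence between consecutive positions $l, l+1$ of $\sigma$. If they lie in the same block, then $\pi(\sigma_{l+1}) = \sigma_l$, so position $\sigma_{l+1}$ is in $\ExcS(\pi)$ iff $\sigma_l > \sigma_{l+1}$, i.e.\ iff $l \in \DescSet(\sigma)$; crucially, the excedance size $\pi(\sigma_{l+1}) - \sigma_{l+1}$ equals the drop $\sigma_l - \sigma_{l+1}$. If $\sigma_{l+1}$ instead begins a new block, it is an LR max, so $\sigma_l < \sigma_{l+1}$ and no descent occurs there. The remaining cycle-edges are the wrap-arounds $\pi(\sigma_a) = \sigma_b$ at the end of each block; since $\sigma_a$ is the LR max of its block, $\sigma_a > \sigma_b$ and these are never excedances.

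Summing these matches over all blocks simultaneously gives $\des(\sigma) = \exc(\Psi(\sigma))$ and $\drp(\sigma) = \depth(\Psi(\sigma))$, completing the proof. The main obstacle is really conceptual rather than technical: one has to spot the right bijection, after which the position-by-position identification of drop sizes with excedance sizes is forced by the reverse-a-block construction. Any bijection that only tracks descent/excedance counts, such as a careless variant of Foata's map, will fail to preserve the finer drops/depth data, so the choice to reverse within blocks (rather than, say, rotating cycles to start at minima) is what makes the argument work.
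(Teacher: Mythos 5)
Your proposed bijection is correct, and I verified the key accounting: partition the positions of $\pi=\Psi(\sigma)$ into block heads $\sigma_a$ and non-heads $\sigma_{l+1}$; the heads satisfy $\pi(\sigma_a)=\sigma_b\le\sigma_a$ (a weak drop or fixed point, never an excedance), while each non-head satisfies $\pi(\sigma_{l+1})=\sigma_l$, so the excedances of $\pi$ are exactly the descent bottoms of $\sigma$ and each excedance gap $\pi(\sigma_{l+1})-\sigma_{l+1}$ equals the corresponding drop $\sigma_l-\sigma_{l+1}$. Note, however, that the paper does not prove this statement at all: Theorem \ref{thm:biv_distrib} is quoted from Reifegerste \cite{reifegerste-bi-incr} and Petersen--Tenner \cite{petersen-tenner-depth}, so there is no in-paper argument to compare against; your write-up is essentially the standard Foata-type cycle-to-word proof that those references give, specialized so that the within-block reversal forces the size-preserving matching of drops to excedance gaps. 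One small slip: your description of the inverse (``write each cycle starting from its maximum and concatenate'') omits a reversal --- the cycle of $\pi$ through $\sigma_a$ reads $\sigma_a,\sigma_b,\sigma_{b-1},\dots,\sigma_{a+1}$, so to recover the block you must keep the leading maximum and reverse the remaining entries (equivalently, traverse the cycle via $\pi^{-1}$ from its maximum). This does not affect the validity of the forward argument, since injectivity is already clear from the fact that the blocks are the cycle supports ordered by their increasing maxima.
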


This result in particular shows that when  $n \geq 1$,
$\sum_{\pi \in \SSS_n} q^{\drp(\pi)} = 
\sum_{\pi \in \SSS_n} q^{\depth(\pi)}$.  
Sivasubramanian in \cite{siva-exc-det}
showed that enumerating $\depth(\pi)$ along with its sign over $\SSS_n$
gave interesting results.  In \cite{siva-exc-det}, $\depth(\pi)$ was 
termed {\it excedance-length} of $\pi$ and denoted $\lexc(\pi)$.  For 
$\pi \in \SSS_n$, define $\inv(\pi) = |\{1 \leq i < j \leq n: \pi_i > \pi_j \}|$.
He proved the following  in \cite[Theorem 9]{siva-exc-det}.

\begin{theorem}[Sivasubramanian]
  \label{thm:sgn_exc}
  When $n \geq 1$, we have 
  $$\sum_{\pi \in \SSS_n} (-1)^{\inv(\pi)} q^{\depth(\pi)} =
  \sum_{\pi \in \SSS_n} (-1)^{\inv(\pi)} q^{\lexc(\pi)} = (1-q)^{n-1}.$$
\end{theorem}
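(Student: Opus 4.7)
The plan is to express the signed sum as a determinant and then evaluate that determinant by elementary row operations. The first equality in the statement is automatic from the definitions, since the author notes $\lexc(\pi) = \depth(\pi)$ for $\pi \in \SSS_n$; the substance is the evaluation equal to $(1-q)^{n-1}$.

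First I would rewrite the statistic as a positionwise product. Writing $(x)^+ = \max(x,0)$, note that $\depth(\pi) = \sum_{i : \pi_i > i} (\pi_i - i) = \sum_{i=1}^n (\pi_i - i)^+$, since positions with $\pi_i \leq i$ contribute zero. Therefore
\[
    q^{\depth(\pi)} = \prod_{i=1}^n q^{(\pi_i - i)^+}.
\]
Combining with $(-1)^{\inv(\pi)} = \sign(\pi)$ and applying the Leibniz formula for determinants,
\[
    \sum_{\pi \in \SSS_n} (-1)^{\inv(\pi)} q^{\depth(\pi)} = \sum_{\pi \in \SSS_n} \sign(\pi) \prod_{i=1}^n M_{i,\pi(i)} = \det(M_n),
\]
where $M_n$ is the $n \times n$ matrix with entries $(M_n)_{i,j} = q^{(j-i)^+}$. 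Concretely, $M_n$ has $q^{j-i}$ on and above the diagonal and $1$ strictly below.

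Next I would evaluate $\det(M_n)$ by performing the row operations $R_i \leftarrow R_i - R_{i+1}$ for $i = 1, 2, \ldots, n-1$. A direct entrywise check shows that for $i<n$ the new row $R_i$ has zeros in positions $1, \ldots, i$ and entry $q^{j-i-1}(q-1)$ in position $j$ for $j>i$, while the last row is unchanged and equals $(1,1,\ldots,1)$. Pulling out the common factor $(q-1)$ from each of the first $n-1$ rows yields $\det(M_n) = (q-1)^{n-1}\det(N_n)$, where $N_n$ has zeros in the first $i$ entries of row $i<n$, a leading $1$ in position $i+1$, and the last row is all ones. Expanding $\det(N_n)$ along the first column picks up only the $(n,1)$ entry, contributing $(-1)^{n+1}$ times the determinant of the $(n-1)\times(n-1)$ upper-triangular minor with $1$'s on the diagonal, so $\det(N_n) = (-1)^{n+1}$.

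Combining, $\det(M_n) = (-1)^{n+1}(q-1)^{n-1} = (1-q)^{n-1}$, which is the desired evaluation. I do not anticipate any serious obstacle beyond bookkeeping signs in the row-reduction; the main conceptual point is Step~1, recognizing $q^{\depth(\pi)}$ as a product over positions so that the Leibniz formula converts the signed enumerator into a determinant.
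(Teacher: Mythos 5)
Your proposal is correct: the identity $\depth(\pi)=\sum_{i=1}^n(\pi_i-i)^+$ is right, the Leibniz expansion gives $\det(M_n)$ with $(M_n)_{i,j}=q^{(j-i)^+}$, and the row operations $R_i\leftarrow R_i-R_{i+1}$ together with the cofactor expansion along the first column do yield $(-1)^{n+1}(q-1)^{n-1}=(1-q)^{n-1}$. One point of context: this paper does not actually prove the statement; it quotes it from Sivasubramanian's earlier article, whose title (``Signed excedance enumeration via determinants'') indicates that the original argument is precisely the determinantal one you reconstructed, so your route matches the cited source rather than anything new in this paper. What the present paper does instead is reprove and strengthen the statement by entirely different means: the canonical-reduced-word involution $f_A$, which is sign-reversing on non-fixed points and simultaneously preserves $\exc$, $\depth$, and $\drops$, reduces the sum to the $2^{n-1}$ fixed points and yields the trivariate refinement $(1-tpq)^{n-1}$ of Theorem \ref{thm: both-stat}. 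The trade-off is clear: your determinant evaluation is shorter and self-contained, but it is tied to the single statistic $\depth$ (because $q^{\depth(\pi)}$ factors positionwise, whereas $\drops$ does not), while the involution machinery buys the joint signed distribution and the further structural consequences (Motzkin path shapes, Bruhat matchings) developed later in the paper.
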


Thus, a natural question is the signed enumeration of the
$\drp$  in $\SSS_n$.  Here, we show that enumerating
signed drops in $\SSS_n$, gives the same result. Even more surprising 
is that the signed joint distribution of the three statistics, $\exc$, 
$\depth$ and $\drops$, factors nicely.  
For $n \geq 1$, define $\SgnMult_n(q) = \sum_{\pi \in \SSS_n} 
(-1)^{\inv(\pi)} t^{\exc(\pi)}p^{\depth(\pi)}q^{\drops(\pi)}$.
Our first main result of this paper is the following.

\begin{theorem}
    \label{thm: both-stat}
    For $n\ge 1$, we have
        $\displaystyle \SgnMult_n(p,q)=(1-tpq)^{n-1}.$	
\end{theorem}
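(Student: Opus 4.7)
The strategy is the classical sign-reversing involution technique. The plan is to construct an involution $f_A:\SSS_n \to \SSS_n$ that preserves the triple $(\exc,\depth,\drops)$ while changing the parity of $\inv$ on all non-fixed points, so that
\[
\SgnMult_n(p,q) = \sum_{\pi \in \mathrm{Fix}(f_A)} (-1)^{\inv(\pi)} t^{\exc(\pi)}p^{\depth(\pi)}q^{\drops(\pi)}.
\]
Since $(1-tpq)^{n-1} = \sum_{S \subseteq [n-1]} (-tpq)^{|S|}$, the fixed points must be parametrised by subsets of $[n-1]$, with each $S$ contributing exactly $(-tpq)^{|S|}$.

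To isolate the fixed points, for $S \subseteq [n-1]$ I would partition $S$ into its maximal runs of consecutive integers, and for each maximal run $\{a,a+1,\ldots,b\}$ form the cycle $a\mapsto a+1\mapsto\cdots\mapsto b+1\mapsto a$ on the positions $\{a,\ldots,b+1\}$; fixing all remaining points defines a permutation $\pi_S$. A direct check shows that each maximal run of size $k$ contributes exactly $k$ to each of $\exc(\pi_S)$, $\depth(\pi_S)$, $\drops(\pi_S)$ and $\inv(\pi_S)$, and inter-run and run-fixed-point interactions produce no further excedances, descents, or inversions. Hence $\exc(\pi_S)=\depth(\pi_S)=\drops(\pi_S)=|S|$ and $(-1)^{\inv(\pi_S)}=(-1)^{|S|}$, so summing over $S$ already yields the right-hand side of the theorem. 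A quick hand-check in $\SSS_3$ and $\SSS_4$ supports this: e.g. for $n=4$ the eight fixed points $[1234],[2134],[1324],[1243],[2314],[1342],[2143],[2341]$ exactly realise the coefficients of $(1-tpq)^3$.

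The bulk of the argument is to design $f_A$ on the complement of $\{\pi_S\}$, using the canonical reduced words announced in the abstract. Given $\pi$ with canonical reduced word $s_{i_1}s_{i_2}\cdots s_{i_\ell}$, the plan is to pick a distinguished site, most naturally the first (or last) descent of the integer sequence $(i_1,\ldots,i_\ell)$, and to perform at this site a local move — insertion, deletion, or a braid-like swap of a simple reflection — that changes $\ell=\inv$ by exactly one and so flips $(-1)^{\inv}$. One must then verify that the move can be chosen so that the resulting change in one-line notation has no net effect on $\exc$, $\depth$ or $\drops$. Involutivity follows because the same distinguished site reappears in the modified word, and the permutations on which no such site exists should turn out to be precisely the $\pi_S$ above, which in particular have canonical reduced words whose letters are weakly increasing in a very controlled way.

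The main obstacle I anticipate is simultaneous preservation of all three statistics under a single local operation. A lone adjacent swap can shift the excedance set, the descent set, and several summands of $\depth$ and $\drops$ at once; the delicate point is to show that at the distinguished position supplied by the canonical word these effects are tightly coordinated — typically an excedance or descent of unit size appears or disappears, altering each of $\exc,\depth,\drops$ by the same amount, or else the various changes cancel in pairs. Establishing this coordination, presumably by a case analysis of how the pivoting letter $s_{i_j}$ interacts with the letters to its left in the canonical word and with the values $\pi_{i_j}$ and $\pi_{i_j+1}$ in one-line notation, is the heart of the proof; once it is in place, the fixed-point enumeration above completes the argument.
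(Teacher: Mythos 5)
Your overall strategy (a sign-reversing involution built from canonical reduced words, fixing a set of $2^{n-1}$ permutations indexed by subsets of $[n-1]$) is the same as the paper's, and your fixed-point enumeration is correct as far as it goes. But the proposal has a genuine gap: the step you yourself flag as ``the heart of the proof'' --- that a single local move on the canonical reduced word preserves $\exc$, $\depth$ and $\drops$ simultaneously --- is not carried out, and as stated it would fail. The involution the paper actually constructs (modify the last ascent of the index word, equivalently append $s_{t-1}$ to the first factor of length $\geq 2$) does \emph{not} preserve $\exc$: for example $f_A(2314)=3214$ in $\SSS_4$, and $\exc(2314)=2$ while $\exc(3214)=1$. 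What is preserved is $\iexc$, the number of excedances of the inverse, and even this is not proved by a local one-line-notation case analysis: the paper routes it through the Foata--Zeilberger bijection, showing that $f_A$ preserves the shape of the associated $2$-Motzkin path (Proposition \ref{thm: shape}), which simultaneously yields preservation of $\depth$ (the area) and of $\iexc$ (the count of $\mathrm{N}$ and $\mathrm{dE}$ steps); only $\drops$ is handled by a direct intermediate-permutation argument (Lemma \ref{lem:drp_preserve_typeA}). A final application of the reverse-complement bijection is then needed to trade $\iexc$ for $\exc$. Your proposal contains neither the shape-preservation argument nor the reverse-complement step, and without them the claimed preservation of the triple $(\exc,\depth,\drops)$ is unsubstantiated.

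Relatedly, your proposed fixed-point set $\{\pi_S\}$ (cycles $a\mapsto a+1\mapsto\cdots\mapsto b+1\mapsto a$ on the maximal runs of $S$) is \emph{not} the fixed-point set of the canonical-reduced-word involution: the actual fixed points are the permutations whose reduced word has strictly decreasing indices, e.g.\ $312=s_2s_1$ in $\SSS_3$ rather than your $231$, and $2314$ (one of your listed fixed points for $n=4$) is not fixed by $f_A$. The fixed points of $f_A$ satisfy $\iexc=\depth=\drops=\inv$, not $\exc=\depth=\drops=\inv$ (indeed $\exc(312)=1\neq 2$), which is precisely why the reverse-complement step cannot be dispensed with. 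So either you must design a genuinely different involution whose fixed points are your $\pi_S$ and which preserves $\exc$ directly --- a construction you have not supplied --- or you must follow the paper's detour; as written, the proposal does neither.
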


This result generalises the results of Mantaci {\cite[Proposition 1]{mantaci-signed}} 
and Theorem \ref{thm:sgn_exc} while giving us the following new corollary. 
For $n\ge 1$, define $\SgnDrp_n(q)=\sum_{\pi \in \SSS_n} (-1)^{\inv(\pi)}q^{\drops(\pi)}$.
\begin{corollary}
  \label{thm:sgn_drp_univ} 
  When $n \geq 1$, we have $\SgnDrp_n(q) = (1-q)^{n-1}$.
\end{corollary}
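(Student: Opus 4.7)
The plan is straightforward: Corollary \ref{thm:sgn_drp_univ} is an immediate specialization of Theorem \ref{thm: both-stat}, which I assume is already in hand. All I would do is substitute $t = 1$ and $p = 1$ into the trivariate signed enumerator.

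Under this substitution, the definition
\[
\SgnMult_n(p,q) = \sum_{\pi \in \SSS_n} (-1)^{\inv(\pi)} t^{\exc(\pi)} p^{\depth(\pi)} q^{\drops(\pi)}
\]
collapses on the left to $\sum_{\pi \in \SSS_n} (-1)^{\inv(\pi)} q^{\drops(\pi)} = \SgnDrp_n(q)$, while the right-hand side $(1-tpq)^{n-1}$ becomes $(1-q)^{n-1}$. Equating the two yields the desired identity $\SgnDrp_n(q) = (1-q)^{n-1}$.

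There is no genuine obstacle at this stage: the entire content of the corollary is already absorbed into Theorem \ref{thm: both-stat}, and the corollary merely records the univariate shadow obtained by marginalizing out $\exc$ and $\depth$. The hard work lives upstream, in the proof of Theorem \ref{thm: both-stat} itself, presumably via the involution $f_A$ built from canonical reduced words that the introduction advertises. It is worth noting that the same specialization principle recovers the other claimed generalisations: setting $t = 1 = q$ in Theorem \ref{thm: both-stat} returns Theorem \ref{thm:sgn_exc} (with $p$ playing the role of the formal variable), while setting $p = 1 = q$ returns Mantaci's signed excedance identity. Thus Corollary \ref{thm:sgn_drp_univ} sits as the third natural univariate specialisation of a single factored trivariate identity, and its proof is no more than reading off that specialisation.
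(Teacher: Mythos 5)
Your proposal is correct and matches the paper exactly: the paper treats Corollary \ref{thm:sgn_drp_univ} as an immediate consequence of Theorem \ref{thm: both-stat}, obtained by setting $t=p=1$ in the trivariate identity, with all the real work residing in the proof of that theorem via the involution $f_A$. Your remarks about the other specializations (recovering Theorem \ref{thm:sgn_exc} and Mantaci's identity) likewise agree with what the paper asserts.
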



Our proof of Theorem  \ref{thm: both-stat} uses a weight preserving,
 sign reversing involution $f_A$ based on the idea of {\it canonical 
reduced words}. In Section \ref{sec:canon_red_words}, we 
cover the relevant material on reduced words in type A and 
type B Coxeter groups.
In Section \ref{sec:invol}, we define our involution and 
in Section \ref{sec:proof_typea}, we show that our involution preserves
the $\drops$ statistic.
The famous Foata-Zeilberger bijection from 
\cite{foaza-denert-indeed} uses the idea of restricted Laguerre history 
and maps permutations in $\SSS_n$ bijectively to weighted Motzkin paths of
length $n$. For an introduction to Motzkin paths, we suggest the paper
\cite{Motzkin-paths-oste-jeugt}  by Oste and Van der Jeugt 
and for heights in Motzkin paths, we refer the reader to 
\cite{height-two-types-Motzkin-brennan-knopfmacher} by Brennan
and Knopfmacher.
 In Section \ref{sec:Motzkin-paths}, we show that our involution 
$f_A$ preserves the shape 
of the Motzkin path under the Foata-Zeilberger bijection. 
Using this observation, we finish the proof of Theorem \ref{thm: both-stat} and 
also show that under the Foata-Zeilberger bijection, the fixed points of 
our involution $f_A$ map to Motzkin paths whose height is at most one.

The connection to Motzkin paths helps us get a continued fraction
expression for the pair of statistics $(\depth, \inv)$.  This form
for the continued fraction was observed and conjectured by 
Petersen in his email to Zeilberger \cite{doron-petersen-email}.   
In Theorem \ref{thm: genfun-dep-inv} we give our proof of this conjecture.  
Using 
a bijection of Clarke, Steingr{\'i}mmson and Zeng \cite{clarke-stein-zeng}, 
we get a continued fraction expression for the pair of statistics $(\drp, \mad)$
over $\SSS_n$ (see Section \ref{sec:cont-frac} for details).

In $\SSS_n$, as the definition of $\drp$ depends on {\it descents},
we consider a straightforward generalization to type-B and type-D  Coxeter groups. 
In  Section \ref{sec:type-B}, we prove a type B counterpart (see Theorem 
\ref{thm:sgn_drp_typeB}) of Corollary \ref{thm:sgn_drp_univ}, where we 
enumerate type-B drops with signs, in $\BB_n$ using very similar 
ideas on reduced words.  Our type D counterpart is Theorem 
\ref{thm-typed} and is given in Section \ref{sec:type-d}.  Our 
type D result is not proved using canonical reduced words but 
inductively.




Lastly, we give two application of our enumeration results.   
Among other results, Graham and Diaconis in \cite[Theorem 1]{graham-diaconis-spearman} 
determined the mean and variance of the  {\bf Spearman-disarray 
metric} statistic  when permutations are sampled randomly from
$\SSS_n$.  In Subsection \ref{subsec:asymp_norm}, we determine the mean 
and the variance of the $\drp$ statistic when permutations are randomly 
sampled from $\AAA_n$. 
In Subsection \ref{subsec:matching}, we show 
that our reduced word based involution gives a matching in the Bruhat order 
on the type A and type B Weyl groups.   Our matching is different from 
the one given by Jones \cite{jones-matching-bruhat} for the type A case.

\section{Canonical reduced words}
\label{sec:canon_red_words}

We briefly cover the relevant material on canonical reduced words in 
type A and type B Coxeter groups needed for our proof.  We point
the reader to Humphreys 
\cite{humphreys-cox}, Stembridge \cite{stembridge-combin-aspects} 
and Garsia \cite{garsia-saga} as good 
background and references for this topic.

Let $(W,S)$ be a Coxeter system. As each element $w\in W$ can be written as 
a product of generators from $S$, its {\it length} is the minimal number 
of generators required in any such product. 
The type-A Coxeter group $\SSS_n$ is generated by 
$S = \{s_1, s_2, \ldots, s_{n-1} \}$ where $s_i$ is the transposition $(i,i+1)$.
The hyperoctahedral group $\BB_n$ is the type-B Coxeter group consisting
of permutations of the set $[\pm n] = \{-n, -(n-1), \ldots, -1,1,2,\ldots,n \}$ 
satisfying $\pi(-i) = -\pi(i)$ for $1 \leq i \leq n$.  It is known that $\BB_n$ is 
generated by $S = \{s_0,s_1,\ldots,s_{n-1} \}$ where $s_0$ 
only switches the sign of the first element.
When $1 \leq i < n$,  $s_i$ is an identical transposition as in the $\SSS_n$ case.  
Descents in Coxeter groups are related to reduced words as follows.  Let $w \in W$ have length $\ell(w)$. Then, $s \in \DescSet(w)$ iff $\ell(ws) < \ell(w)$ 
(see Bj{\"o}rner and Brenti \cite{bjorner-brenti}).  
We will write ``$w$ has a descent at position $i$'' to mean $s_i \in \DescSet(w)$.

For both type $A$ and type $B$ Coxeter groups, recall that 
multiplying an element $w = w_1,w_2,\ldots, w_n \in W$ (given in 
one-line notation), by $s_i$ for $i=1,\ldots,n-1$ on the right 
swaps 
$w_i$ and $w_{i+1}$. For type-B Coxeter groups, multiplying $w$ by 
$s_0$ on the right flips the sign of $w_1$.  
In both $\SSS_n$ and $\BB_n$, we denote the empty word by $1$.   
To get the reduced word for $\pi$, we start from the identity 
permutation and apply $s_i$ on positions get to $\pi$.

\subsection{Type-A Coxeter Groups}  
\label{subsec:type-a}

Given $\pi \in \SSS_n$,
we start from the identity permutation and mimic an iterative 
``reverse sorting" process to get to $\pi$ and use this to 
define the canonical reduced word $\rd(\pi)$.   We have $(n-1)$
stages which we number from $1$ to $n-1$ in increasing order. 
For $1 \leq i \leq n-1$, at the $i$-th stage, we ensure that 
the $i$-th element from 
the right agrees with $\pi$.  At subsequent stages, these 
elements which are set in their correct place do not move.
It is clear that after performing this for $(n-1)$ stages, 
we will get $\pi$.  Further, it is clear that the factorization 
obtained for $\pi$ has length $\inv(\pi)$ and so we term
the concatenated string of factors of this algorithm as 
$\rd(\pi)$.  Note that we 
have started correction from the rightmost element. Alternatively, one could
have started correction from the leftmost element (this 
is done by Garsia, see \cite[Theorem 1.1.1]{garsia-saga}).

\begin{example}
\label{eg:type-a}
In one-line notation, let 
$\pi = 4,1,5,2,3 \in \SSS_5$.  
We start with $\pi_0 = \id = 1,2,3,4,5$.   Since $3$ is
the last letter of $\pi$, we move the $3$ to the last position by
applying $r_4 = s_3s_4$ (from left to right)  to $\pi_0$.  This gives 
us $\pi_1 = 1,2,4,5,3$.  Since $2$ appears in the penultimate position 
of $\pi$, we apply $r_3 = s_2s_3$ to $\pi_1$ to get $\pi_2 = 1,4,5,2,3$.  Since
$5$ is in its correct position, we apply $r_2 = 1$ to $\pi_2$ to get
$\pi_3$ (in this case $\pi_3 = \pi_2$).  Finally, we apply $r_1 = s_1$ to
$\pi_3$ to get $\pi_4 = \pi$. Thus, we get the canonical 
factorisation of $\pi$ as $\rd(\pi) = [r_4][r_3][r_2][r_1]$, 
that is $\rd(\pi) = [s_3 s_4] [s_2 s_3] [1]  [s_1]$.  Square brackets 
are only given above for added clarity.
\end{example}

To see the set in which $r_i$ belongs, let 
$W^{\bkt{i} } = \{1, s_i, s_{i-1} s_i, \ldots, s_1 s_2 \cdots s_i \}$.  
By this reverse sorting process, it is easy to see that
for all $\pi \in \SSS_n$, we will get $\rd(\pi) = [r_{n-1}][r_{n-1}]
\cdots[r_1]$ where $r_i \in W^{\bkt{i}}$.

\begin{remark}
\label{rem:len-type-a}
In $\SSS_n$, it is easy to see that $W^{\bkt{1}}$ has only elements 
of length $0$ or $1$ in it.
\end{remark}

\subsection{Type-B Coxeter Groups}  
\label{subsec:type-b}

The basic idea is the same in this case as well.  The only difference is 
that now, to get elements with a negative sign, we need to move it to
the first place, apply $s_0$ and move it back to its correct position.
As opposed to the $\SSS_n$ case, we will have $n$ stages as at the last 
stage, the first element could
possibly get negated.  Thus if we use the same notation as in Subsection
\ref{subsec:type-a}, we will get factors $[r_n][r_{n-1}]\cdots[r_1]$.
To see where the factors $r_i$ belong,  for $1 \leq i \leq n$, define 
$$
W^{\bkt{i}}  = \{1, s_{i-1}, s_{i-2}s_{i-1}, \ldots, s_0s_1\cdots s_{i-1}, 
	s_1s_0s_1\cdots s_{i-1}, \ldots, s_{i-1} \cdots s_1s_0s_1 \cdots s_{i-1}\}.
$$

Note that $W^{\bkt{1}} = \{1, s_0 \}$.

\begin{remark}
\label{rem:len-type-b}
As in the type-A case, note that $W^{\bkt{1}}$ has only elements 
of length $0$ or $1$ in it.  
\end{remark}

We start  from the identity permutation 
$\id \in \BB_n$ and analogously perform operations to move elements
of $\pi = \pi_1,\pi_2,\ldots, \pi_n \in \BB_n$ to the right such that 
for $1 \leq i \leq n$, after $i$ iterations, the $i$ rightmost 
elements are exactly as they appear in $\pi$.  We will get 
$\rd(\pi) = [r_n][r_{n-1}] \cdots [r_2][r_1]$, where $r_i \in W^{\bkt{i}}$ and
as before, square brackets are only given for clarity.
For example, if in one-line notation, we have
$\sigma = 4,1,\ol{5},2,\ol{3}$, then one can check that applying 
$r_5 = s_2s_1s_0s_1s_2s_3s_4$  to $\id$ results in $1,2,4,5,\ol{3}$.  Thus, 
the rightmost element is in its correct place.  Proceeding in a similar manner,
one can check that 
$\rd(\sigma) = [s_2s_1s_0s_1s_2 s_3s_4] [s_2s_3] [s_2s_1s_0s_1s_2] [s_1] [1].$

\section{Two sign reversing involutions and some properties}
\label{sec:invol}

Let $W$ be either $\SSS_n$ or $\BB_n$.
We look at the indices of the canonical
reduced word for each $w \in W$.  This is a sequence of integers of length $\ell(w)$.
For a finite sequence $x = x_1, x_2, \ldots, x_t$ of integers, define 
its ascent set to be  $\asc(x) = \{ i \in [t-1]: x_i < x_{i+1}\}$.  For $w \in W$, 
recall $\rd(w)$ is the canonical reduced word of $w$.   As the generators
are denoted by $s_{r}$ where $r$ is an integer, 
let $\ird(w)$ 
be the sequence of these indices of the generators that appear in $\rd(w)$. 
Let $\asc( \ird(w))$ be the ascent set of the word $\ird(w)$.

\begin{remark}
\label{rem:no-asc-between-wi}
In both the type A and the type B case,  the following is easy to see.
Since all elements of $W_i$ always end with $s_i$ and start with 
some $s_j$ with $j < i$, there cannot be an ascent in $\ird(w)$
between the last letter of $r_i$ and the first letter of $r_j$ when $i > j$. 
\end{remark}

By Remark \ref{rem:no-asc-between-wi}, in $\ird(w)$, ascents 
can appear only between two letters, where both letters are 
elements of $W^{\bkt{i}}$.
Further, all ascents will be between consecutive numbers.
The word {\it consecutive} is used with respect to the total order on
the generators.  
That is, all ascents in $\ird(w) = r_1, r_2, \ldots, r_{\ell}$ 
will be at indices 
$j$ for some $j < \ell$, where $r_j = a$ and $r_{j+1} = a+1$.
Consider Example \ref{eg:type-a} and recall 
$\pi = 4,1,5,2,3 \in \SSS_5$.  We clearly have
its indices $\ird(\pi) = 3,4,2,3,1$ and $\asc(\ird(\pi)) = \{1,3\}$.

\subsection{Type-A Coxeter groups}
\label{subsec:invol_typeA}
We define the involution $f_A$ for the type-A case first.  
For $w \in \SSS_n$, if $\ird(w)$ has ascents, then 
let $i \in \asc( \ird(w) )$ be its largest element.  That is,
consider the last occurrence of an ascent. 
From the discussion in the first paragraph of Section \ref{sec:invol}, 
such an ascent will be of the form $a,a+1$ (as it arises from 
$s_a s_{a+1}$).  In $w$, change $s_a s_{a+1}$ to $s_a s_{a+1} s_a$.  As 
this modified expression is a product of generators, we get an 
element $w' \in W$ and define 
$f_A(w) = w'$.  If $\ird(w)$ does not have descents, then define $f_A(w) = w$.

Alternatively, we could have defined the map $f_A$ as follows.  Let $w \in \SSS_n$ 
and let its canonical reduced word 
be $\rd(w) = [r_{n-1}] [r_{n-2}] \cdots [r_2] [r_1]$ with
$r_i \in W^{\bkt{i} }$.
Let $\ell(r_i)$ be 
the number of $s_j$'s in $r_i$.  
Consider the {\it smallest} $i$ such that $\ell(r_i) \geq 2$.  
If $\ell(r_i) \geq 2$,  by Remark \ref{rem:len-type-a}, $i \geq 2$ 
and the string $r_i$ will end with $s_{i-1} s_i$.  The map 
$f_A$ converts $r_i$ to $r_is_{i-1}$ 
and does not change any other $r_k$'s.  As $f_A(w)$ is a product of generator
elements, after a possible reduction (using $s_i^2 = 1$), we 
will get an element $w' \in W$  and define $f_A(w) = w'$.
If $\pi \in \SSS_n$ is such that $\ell(r_i) \leq 1$ for all $i$, 
then define $f_A(\sigma) = \sigma$.
We give an example of the bijection $f_A$ below.  

\begin{example}
From Example \ref{eg:type-a}, when $\pi = 4,1,5,2,3$,  
$\rd(\pi) = [s_3 s_4] [s_2s_3] [1]  [s_1]$.   Thus, 
$f_A(\pi) = [s_3 s_4] [s_2s_3\textcolor{blue}{s_2}] [1] [s_1] = [s_3 s_4] [s_2s_3] [s_2] 
[s_1]$ and so $f_A(\pi) = 5,1,4,2,3$.  
\end{example}

\subsection{Type-B Coxeter groups}
For $\sigma \in \BB_n$, let 
$\rd(\sigma) = [r_n][r_{n-1}] \cdots [r_1]$.  
We differentiate between the $r_i$'s using the following definition.
For $1 < i \leq n$, define $r_i \in W^{\bkt{i}}$ as being $\nml$ (short 
for {\sl near-maximal-length}) if $r_i = u_i$ or if  $r_i = v_i$ where   
  $u_i = s_{i-1}s_{i-2}\cdots s_0s_1\cdots s_{i-1} \in W^{\bkt{i}}$ 
and 
$v_i = s_{i-2}\cdots s_0s_1\cdots s_{i-1} \in W^{\bkt{i}}$.  

We define an involution $g_B$ on $\BB_n$ in two parts as follows.  
First, consider the case when $\sigma \in \BB_n$
is such that $\rd(\sigma)$ has a factor $r_i$ which is $\nml$. 
In this case, let $i$ be the largest index with $r_i$ being $\nml$ 
(that is, $r_i$ is the leftmost $\nml$ factor).  We change $u_i$ to $v_i$ 
and vice versa.  That is, 
if $r_i = u_i$, then $\rd(\sigma) = [r_n]\cdots[u_i] \cdots [r_1]$. 
In this case, 
define $g_B(\sigma) = [r_n] \cdots[v_i] \cdots [r_1]$.  If $r_i = v_i$, then,
$\rd(\sigma) = [r_n]\cdots[v_i] \cdots [r_1]$.  In this case, define
$g_B(\sigma) = [r_n] \cdots[u_i] \cdots [r_1]$.  

If $\sigma\in \BB_n$, does not have $\nml$ factors in $\rd(\sigma)$, 
we proceed as done in the type-A involution $f_A$.  
In this case, let $t$ be the smallest index with 
$\ell(r_t) \geq 2$.  As seen before, we can show $t\geq 2$.  Further, $r_t$ will 
end with $s_{t-1}$.  Define 
$g_B(\sigma) = [r_n] \cdots [r_{t+1}][r_t s_{t-2}][r_{t-1}]\cdots[r_1]$.  
Lastly, if $\sigma \in \BB_n$ is such
that $\rd(\sigma)$ has neither a $\nml$ factor, nor a factor
$r_i$ with $\ell(r_i) \geq 2$, then define $g_B(\sigma) = \sigma$.
We give two examples of our bijection $g_B$ below.  

\begin{example}
  \label{eg:typeB_one}
Let
$\sigma = 3,1,\ol{5},2,\ol{4} \in \BB_5$.  We have 
$\rd(\sigma) = [r_5] [r_4] [r_3] [r_2] [r_1]$ where
$r_5 = s_3s_2s_1s_0s_1s_2 s_3s_4$, $r_4= s_2s_3$, $r_3= s_2s_1s_0s_1s_2$, 
$r_2= s_1$
and $r_1 = 1$.  It is easy to see that $\rd(\sigma)$ has two $\nml$ factors 
viz $r_5 = u_5 $ and $r_3 = v_3$.  Among them, the 
leftmost $\nml$ factor is $r_5$.
We replace $r_5 = u_5$ by $r_5 = v_5$.  Thus, we get 
$g_B(\sigma) = [\textcolor{blue}{s_4s_3s_2s_1s_0s_1s_2 s_3s_4}] [s_2s_3]  
[s_2s_1s_0s_1s_2]  [s_1][1]$,
that is $g_B(\sigma) = 3,1,\ol{4},2,\ol{5}$.
\end{example}

\begin{example}
Let $\sigma = \ol{6},1,5,3,4,\ol{2} \in \BB_6$.  Then, $\rd(\sigma) = [r_6][r_5][r_4][r_3][r_2][r_1]$
where $r_6 = s_2s_1s_0s_1s_2s_3s_4s_5$, $r_5=s_3s_4$, $r_4 = s_2s_3$, $r_3 = 1$, $r_2 = s_1$ and 
$r_1= s_0$.
Since there is no $\nml$ factor,
we use the type-A style involution and change $r_4 = s_2s_3$ to $s_2s_3s_2$ to get
$g_B(\sigma) = [r_6][r_5][\blue{s_2s_3s_2}][r_3][r_2][r_1]$.  Hence, we
get $g_B(\sigma) = \ol{5},1,6,3,4,\ol{2}$.
\end{example}

\subsection{An intermediate sequence of elements of $W$}
\label{subsec:interm_elem}
In this subsection, for $W$ being both the type-A and type-B Coxeter groups, 
given $w \in W$,  we define a sequence of elements starting with 
the respective identity elements $\id$ and ending at $w$.  
This sequence will be used in the proof of 
Lemmas  \ref{lem:drp_preserve_typeA} and \ref{lem:drp_preserve_typeB}.

Let $W$ have $n$ generators and let $w \in W$ have $\rd(w) = [r_n][r_{n-1}]\cdots[r_1]$.
Define $w_{n+1} = \id$ and for $1 \leq i \leq n$, define $w_i = \prod_{j=n}^i [r_j]$.
Since $r_j$ is a product of generators, $w_i \in W$.  Clearly, $w_1 = w$ and we 
think of the sequence $w_j$ as $j$ decreases from $j=n+1$ to $j=1$ as a sequence of elements
starting from $\id$ and getting closer to $w$.  It is not necessary for elements 
$w_j$ of this sequence to be distinct (see Example \ref{eg:interm_elems}).  We 
call elements $w_j$ as {\bf the sequence of intermediate elements} of $w$.  It is clear from
the definition of $w_i$ that $\rd(w_i) = \prod_{j=n}^i [r_j] \prod_{j=i-1}^1 
[1] = \prod_{j=n}^i [r_j]$.

\begin{example}
  \label{eg:interm_elems}
Let $w = 3,1,\ol{5},2,\ol{4} \in \BB_5$.  Then, as seen in
Example \ref{eg:typeB_one}, 
$\rd(\sigma) = [r_5] [r_4] [r_3] [r_2] [r_1]$ where
$r_5 = s_3s_2s_1s_0s_1s_2 s_3s_4$, $r_4= s_2s_3$, $r_3= s_2s_1s_0s_1s_2$, $r_2= s_1$
and $r_1 = 1$.  In this case, we have $w_6 = \id$, $w_5 = [r_5] = 1,2,3,5,\ol{4}$,
$w_4 = [r_5][r_4] = 1,3,5,2,\ol{4}$, $w_3 = 1,3,\ol{5},2,4$ and  $w_2 = w_1 = 3,1,\ol{5},2,\ol{4} = w$.
\end{example}

\subsection{A few lemmas}

We prove a few lemmas about the map $f_A$ defined above and give
without proof similar lemmas about the map $g_B$.

\begin{lemma}
  \label{lem:canonical_preserve}
  If $w \in \SSS_n$, then $f_A(w)$ perhaps after using the relation 
$s_i^2=1$ is the canonical reduced
  word of some $w' \in \SSS_n$.
\end{lemma}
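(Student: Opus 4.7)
The plan is to exhibit the canonical factorization of $f_A(w)$ explicitly by showing that, after at most one application of the relation $s_{i-1}^2=1$, the expression $f_A(w)$ reparses into the form $[r_{n-1}^{\mathrm{new}}]\cdots[r_1^{\mathrm{new}}]$ with each $r_k^{\mathrm{new}}\in W^{\bkt{k}}$. Uniqueness of the canonical factorization (i.e., the bijection between $W^{\bkt{n-1}}\times\cdots\times W^{\bkt{1}}$ and $\SSS_n$ obtained by multiplying the factors) then identifies this factorization with a unique $w'\in\SSS_n$.

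First I would pin down the shape of the affected factors. If $w$ is a fixed point of $f_A$ the statement is trivial, so assume $w$ is non-fixed, and let $i$ be the smallest index with $\ell(r_i)\geq 2$. By Remark~\ref{rem:len-type-a}, $i\geq 2$; and since $r_i\in W^{\bkt{i}}$ has length at least $2$, it must take the form $r_i=s_j s_{j+1}\cdots s_i$ for some $j$ with $1\leq j\leq i-1$, so in particular $r_i$ ends in $s_{i-1}s_i$. Moreover, minimality of $i$ forces $\ell(r_{i-1})\leq 1$, so $r_{i-1}\in\{1,s_{i-1}\}$.

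Next I would track the reparsing. By the definition of $f_A$,
\[
f_A(w) \;=\; [r_{n-1}]\cdots[r_{i+1}]\,[r_i\,s_{i-1}]\,[r_{i-1}]\cdots[r_1] \;=\; r_{n-1}\cdots r_{i+1}\cdot r_i \cdot (s_{i-1}\,r_{i-1}) \cdot r_{i-2}\cdots r_1.
\]
If $r_{i-1}=1$, then $s_{i-1}\,r_{i-1}=s_{i-1}\in W^{\bkt{i-1}}$ and no relation is used. If $r_{i-1}=s_{i-1}$, the relation $s_{i-1}^2=1$ gives $s_{i-1}\,r_{i-1}=1\in W^{\bkt{i-1}}$. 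In both cases the slot-$(i-1)$ entry lies in $W^{\bkt{i-1}}$, while every other $r_k$ is untouched (and in particular $r_i^{\mathrm{new}}=r_i\in W^{\bkt{i}}$), yielding the required canonical form.

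The main---essentially only---delicate point is confirming that no relation beyond $s_{i-1}^2=1$ is ever needed; this is precisely guaranteed by the minimality of $i$, which traps $r_{i-1}$ in the two-element set $\{1,s_{i-1}\}$ where the trailing $s_{i-1}$ absorbs cleanly. Once this is verified, invoking uniqueness of canonical factorizations immediately names a unique $w'\in\SSS_n$ with $\rd(w')=[r_{n-1}^{\mathrm{new}}]\cdots[r_1^{\mathrm{new}}]$, and so $f_A(w)=w'$ is well-defined.
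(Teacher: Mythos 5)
Your proposal is correct and follows essentially the same route as the paper: both arguments reduce to the observation that, by minimality of the modified index, the factor immediately to its right lies in $\{1,s_{i-1}\}$, so the appended generator either slots in as a new length-one factor or cancels via $s_{i-1}^2=1$, toggling that factor between its two possible values while leaving all other factors intact. Your version is slightly more explicit about the reparsing and about why $r_i$ ends in $s_{i-1}s_i$, but there is no substantive difference.
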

\begin{proof}
  Let $T \subseteq \SSS_n$ be those elements $w \in W$ such that 
$\ird(w)$ has no ascents.  As $T$ is the set of fixed points of $f_A$,
if $w \in T$, then $f_A(w) = w$ and so the lemma trivially follows.  Thus,
assume that $w \in W - T$.  
Recall that if
$\rd(w) = [r_n] \cdot [r_{n-1}] \cdots [r_2] \cdot [r_1]$, then we modify the last
index $j$ such that $\ell(r_j) \geq 2$.  As given in the alternate definition of
the map $f_A$, $j \geq 2$ and so $j-1 \geq 1$.  Further, $\ell(r_{j-1})$ takes only two 
values $0$ or $1$ and both values give rise to canonical reduced words of some elements
$u,w \in W$.  

We claim that $f_A$ swaps the two possibilities for $\ell(r_{j-1})$.  To see this,
suppose $\ell(r_{j-1}) = 1$.  Then $r_{j-1} = s_{i-1}$ and
since $f_A$ changes $r_j$ to $r_j s_{i-1}$, after using the relation $s_{i-1}^2 = 1$,
we get that $r_{j-1} = 1$, i.e $\ell(r_{j-1}) = 0$.  Similarly, if $\ell(r_{j-1}) = 0$, 
then as $f_A$ changes $r_j$ to $r_j s_{j-1}$, we will have $r_{j-1} = s_{j-1}$.  Hence
$\ell(r_{j-1}) = 1$.  Thus, $f_A$ switches these
two possibilities for $r_{j-1}$ and does not change other $r_j$'s.  Hence
$f_A(w)$ is also a canonically reduced word, completing the proof.
\end{proof}

In the next two lemmas, we claim that $f_A$ is an involution and that it
reverses the sign on non fixed-points.

\begin{lemma}
\label{lem:invol}
The map $f_A$ defined above is an involution.  i.e. $f_A(f_A(w)) = w$ for all
$w \in W$.
\end{lemma}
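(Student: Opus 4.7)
The plan is to use the alternative description of $f_A$ given in Subsection \ref{subsec:invol_typeA} together with the structural fact extracted in the proof of Lemma \ref{lem:canonical_preserve}. The point is that $f_A$ acts on $\rd(w)$ in a very local and perfectly reversible way, so iterating it twice undoes a single toggle.

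First I would dispose of the fixed points. If $\rd(w) = [r_{n-1}][r_{n-2}] \cdots [r_1]$ satisfies $\ell(r_j) \leq 1$ for every $j$, then $f_A(w) = w$ by definition and so trivially $f_A(f_A(w)) = w$. Otherwise, let $i$ be the smallest index with $\ell(r_i) \geq 2$; by Remark \ref{rem:len-type-a}, such an $i$ is automatically at least $2$.

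The key step is to recall precisely what the proof of Lemma \ref{lem:canonical_preserve} establishes: in passing from $\rd(w)$ to $\rd(f_A(w))$, every factor $r_j$ with $j \neq i-1$ is unchanged, while $r_{i-1}$ is toggled between the two values $1$ and $s_{i-1}$. In either state $\ell(r_{i-1}) \leq 1$, so the set $\{\, j : \ell(r_j) \geq 2 \,\}$, and in particular its minimum $i$, is identical for $w$ and for $f_A(w)$. Consequently, $f_A$ applied to $f_A(w)$ acts at exactly the same index $i$ and again toggles $r_{i-1}$; this second toggle undoes the first while every other factor is already preserved, so $f_A(f_A(w)) = w$.

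The only mild obstacle is pinning down the fact that the selected index $i$ is genuinely invariant under one application of $f_A$. This invariance is immediate once Lemma \ref{lem:canonical_preserve} is in hand, since that lemma identifies exactly which single factor is altered and confines the alteration to lengths in $\{0, 1\}$, neither of which can enter the set $\{\, j : \ell(r_j) \geq 2 \,\}$. With that invariance established, the involutive identity $f_A(f_A(w)) = w$ is essentially automatic.
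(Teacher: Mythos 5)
Your proposal is correct and follows essentially the same route as the paper: both reduce to the fixed-point case, then invoke the analysis in Lemma \ref{lem:canonical_preserve} showing that $f_A$ only toggles $\ell(r_{i-1})$ between $0$ and $1$ at the smallest index $i$ with $\ell(r_i)\geq 2$, so a second application undoes the first. Your explicit remark that the selected index $i$ is invariant under one application of $f_A$ is a detail the paper leaves implicit, but it is the same argument.
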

\begin{proof}
  Let $T$ be the set defined in the proof of Lemma \ref{lem:canonical_preserve}.
  The lemma is trivial if $w \in T$.  If $w \in W - T$, then,
  from the proof of Lemma \ref{lem:canonical_preserve}  it follows
  that if $j$ is the smallest index such that $\ell(r_j) \geq 2$, 
  then $f_A$  flips the two possible values 0 and 1 of 
  $\ell(r_{j-1})$.  Hence $f_A(f_A(w)) = w$, completing the proof.
\end{proof}

\begin{lemma}
  \label{lem:sign_rev}
Let $W = \SSS_n$ and $T$ be the fixed point set of $f_A$ defined in Lemma 
\ref{lem:canonical_preserve}.
For $w \in W - T$, $\inv(w) \not \equiv \inv(f_A(w))$ (mod 2).
\end{lemma}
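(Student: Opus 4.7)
The plan is to reduce the parity statement about $\inv$ to a length calculation in the Coxeter group $\SSS_n$, and then use the structural description of $f_A$ given by Lemma \ref{lem:canonical_preserve}.

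First, I would recall the standard fact that in the type-A Coxeter group $\SSS_n$, the Coxeter length $\ell(w)$ equals $\inv(w)$ for every $w \in \SSS_n$ (see, e.g., Bj\"orner--Brenti). Hence it suffices to prove that $\ell(w) \not\equiv \ell(f_A(w)) \pmod{2}$ whenever $w \in \SSS_n - T$, and in fact I would aim for the stronger statement that $\ell(f_A(w)) = \ell(w) \pm 1$.

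Next, I would invoke the analysis done inside the proof of Lemma \ref{lem:canonical_preserve}. That lemma established that if $j$ is the smallest index with $\ell(r_j) \geq 2$ in the canonical factorization $\rd(w) = [r_{n-1}]\cdots[r_1]$, then $f_A$ leaves every factor $r_k$ with $k \neq j-1$ unchanged, while the factor $r_{j-1}$ is flipped between the two possibilities $r_{j-1} = 1$ (length $0$) and $r_{j-1} = s_{j-1}$ (length $1$). Since the canonical reduced word is a reduced expression, the total length of $w$ is additive over its factors: $\ell(w) = \sum_{i=1}^{n-1} \ell(r_i)$. Therefore, flipping only $r_{j-1}$ between lengths $0$ and $1$ changes $\ell(w)$ by exactly $\pm 1$.

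Combining these two steps gives $\inv(f_A(w)) = \ell(f_A(w)) = \ell(w) \pm 1 = \inv(w) \pm 1$, so $\inv(w) \not\equiv \inv(f_A(w)) \pmod{2}$, as required. I do not anticipate any genuine obstacle here: all of the structural content has already been packaged into Lemma \ref{lem:canonical_preserve}, and the only fact to quote externally is the classical identification of inversions with Coxeter length in type $A$. The one point worth double-checking is that the canonical factorization is actually a reduced expression (so that lengths add), which follows from the construction in Subsection \ref{subsec:type-a} since the reverse-sorting procedure introduces one inversion per generator applied.
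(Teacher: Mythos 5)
Your proposal is correct and follows essentially the same route as the paper: both arguments observe that $f_A$ changes the factorization by exactly one generator (flipping $\ell(r_{j-1})$ between $0$ and $1$) and conclude that the parity of $\inv = \ell$ must change. The only cosmetic difference is that the paper invokes the Deletion property to pass from ``product of $k\pm 1$ generators'' to the parity of the length, whereas you use the reducedness of the canonical factorization (from Lemma \ref{lem:canonical_preserve}) and additivity of length over the factors to get the exact $\pm 1$; both are valid.
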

\begin{proof}
If $\rd(w)$ is a product of $k$ generators, then by definition,
$\inv(w) \equiv k$ (mod 2).  By construction, $f_A(w)$ is a product of $k+1$ 
or $k-1$ transpositions (as $\ell(r_{j-1})$ changes parity).
By the Deletion property 
(see \cite[Pg 17]{bjorner-brenti}), $f_A(w)$ has length 
$p$ where $p \not \equiv k$ 
(mod 2).  Thus the signs of $w$ and $f_A(w)$ are different.
\end{proof}

An analogous result is true for the involution $g_B$ and since the proof is not 
very different, we merely state the next result and omit its proof. 

\begin{lemma}
  \label{lem:sign_rev_b}
Let $W$ be the type-B Coxeter group 
and let $w \in W$.  Then, $g_B(w)$ perhaps after using the 
relation $s_i^2 = 1$ will give a reduced decomposition
of some $w' \in W$.  Further, the map $g_B$ is an involution and reverses
sign on the set of its non fixed-points.
\end{lemma}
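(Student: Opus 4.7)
The plan is to mirror Lemmas \ref{lem:canonical_preserve}, \ref{lem:invol}, and \ref{lem:sign_rev}, splitting the argument according to the two branches in the definition of $g_B$. Let $T\subseteq\BB_n$ be the fixed-point set of $g_B$, i.e.\ those $w$ whose canonical reduced word has neither an $\nml$ factor nor any factor $r_j$ with $\ell(r_j)\ge 2$; for such $w$ the three claims hold trivially, so I would assume $w\notin T$ and treat the remaining two sub-cases separately.

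In the $\nml$ branch, let $i$ be the largest index with $r_i\in\{u_i,v_i\}$. Both $u_i$ and $v_i$ lie in $W^{\bkt{i}}$ and satisfy $\ell(u_i)-\ell(v_i)=1$, so swapping $r_i$ between them yields a factorisation already in canonical form, hence the canonical reduced word of a unique $w'\in\BB_n$. Because no $r_j$ with $j>i$ is touched, the leftmost $\nml$ factor of $\rd(w')$ remains at position $i$; a second application of $g_B$ therefore undoes the swap, giving $g_B(g_B(w))=w$. The total length changes by exactly one, so the Deletion property (as in Lemma \ref{lem:sign_rev}) forces a parity flip in $\ell$ and hence sign reversal.

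In the no-$\nml$ branch, let $t$ be the smallest index with $\ell(r_t)\ge 2$. A key preliminary observation is that $t\ge 3$, because every length-$\ge 2$ element of $W^{\bkt{2}}$ is either $v_2=s_0s_1$ or $u_2=s_1s_0s_1$, both $\nml$, contradicting the branch hypothesis. Hence $r_{t-1}\in W^{\bkt{t-1}}$ has length at most one, so $r_{t-1}\in\{1,s_{t-2}\}$. The appended letter $s_{t-2}$ in $r_ts_{t-2}$ fuses with $r_{t-1}$ via $s_{t-2}^2=1$, toggling $r_{t-1}$ between $1$ and $s_{t-2}$ and leaving every other factor (in particular $r_t$) unchanged. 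This produces a canonical factorisation, the toggle is visibly involutive, no factor $r_j$ with $j\ge t$ is altered so no new $\nml$ factor can appear, and the second application of $g_B$ stays in this branch with the same $t$; once again the total length shifts by $\pm 1$, giving sign reversal.

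The main subtlety, and the step I would verify most carefully, lies in the no-$\nml$ branch: one must check that $t\ge 3$ so that the appended $s_{t-2}$ genuinely meets $r_{t-1}$ rather than propagating further down, that the canonicalisation via $s_{t-2}^2=1$ amounts to a single length-flip at $r_{t-1}$, and that $g_B(w)$ remains in the same branch as $w$ so that the two cases of the definition do not interfere under iteration. Modulo these checks the argument proceeds in exact parallel with the type-$A$ proofs and establishes all three assertions.
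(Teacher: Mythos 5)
Your proposal is correct and is essentially the argument the paper intends: the paper omits the proof of this lemma, deferring to the type-A Lemmas \ref{lem:canonical_preserve}--\ref{lem:sign_rev}, and you carry out exactly that adaptation, handling the two branches of $g_B$ separately. Your additional observation that $t\ge 3$ in the no-$\nml$ branch (since every length-$\ge 2$ element of $W^{\bkt{2}}$ is $\nml$) is a correct and worthwhile sharpening of the paper's stated bound $t\ge 2$, and the remaining checks (branch stability under iteration, single length-toggle at $r_{t-1}$, length difference $\ell(u_i)-\ell(v_i)=1$) all go through as you describe.
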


The following lemma will be needed for our results in Section
\ref{sec:Motzkin-paths}.  Since it involves the  map $f_A$, we present
it here.

\begin{lemma} 
\label{lemma:transposition-lem}
If $f_A(\pi) \not= \pi$, then there exists a transposition $(a,b)$ such that 
$(a,b)f_A(\pi)=\pi$. Further,  if $i$ is the index where the canonical reduced 
word decompositions of $\pi$ and $f_A(\pi)$ differ and if $w_{i+1}$ 
is the $(i+1)$-st intermediate permutation of $\pi$, then $a,b$ are 
the letters at the positions $i,i+1$ in $w_{i+1}$ respectively. 
\end{lemma}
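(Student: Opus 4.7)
The plan is to use the detailed description of $f_A$ already established in Lemma \ref{lem:canonical_preserve} to pin down exactly which single factor of the canonical reduced word is altered by $f_A$, and then to compute $\pi\, f_A(\pi)^{-1}$ as a conjugate of a simple transposition by the intermediate element $w_{i+1}$; both assertions of the lemma will fall out of this one conjugation.

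First I would invoke the proof of Lemma \ref{lem:canonical_preserve}. It shows that if $\pi \in \SSS_n$ is not a fixed point of $f_A$ and $k$ is the smallest index with $\ell(r_k) \ge 2$ in $\rd(\pi) = [r_{n-1}][r_{n-2}]\cdots[r_1]$, then $\rd(f_A(\pi))$ agrees with $\rd(\pi)$ in every factor except at index $k-1$, where the factor flips between $1$ and $s_{k-1}$. In particular the differing index is uniquely defined and equals $i := k-1$, and the intermediate element $w_{i+1} = w_k = r_{n-1} r_{n-2}\cdots r_k$ is common to $\pi$ and $f_A(\pi)$.

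Next, using this common head, I would write
\[
\pi \;=\; w_{i+1}\, r_i\, r_{i-1}\cdots r_1, \qquad f_A(\pi) \;=\; w_{i+1}\, r_i'\, r_{i-1}\cdots r_1,
\]
where $\{r_i,\, r_i'\} = \{1,\, s_i\}$. The tail factors $r_{i-1}\cdots r_1$ cancel in $\pi\, f_A(\pi)^{-1}$, yielding
\[
\pi\, f_A(\pi)^{-1} \;=\; w_{i+1}\,\bigl(r_i\,(r_i')^{-1}\bigr)\, w_{i+1}^{-1} \;=\; w_{i+1}\, s_i\, w_{i+1}^{-1}.
\]
Since $s_i$ is the simple transposition $(i,\,i+1) \in \SSS_n$ and $\sigma (p,q) \sigma^{-1} = (\sigma(p),\,\sigma(q))$ for every $\sigma$, the right-hand side equals $\bigl(w_{i+1}(i),\, w_{i+1}(i+1)\bigr)$. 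Setting $a = w_{i+1}(i)$ and $b = w_{i+1}(i+1)$, which are precisely the letters at positions $i$ and $i+1$ of $w_{i+1}$, gives $(a,b)\, f_A(\pi) = \pi$, as claimed.

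The main obstacle is concentrated entirely in the first step: the description of $f_A$ replaces $r_k$ by $r_k s_{k-1}$, but this expression is not in canonical form, and one has to check carefully that after regrouping and reducing via $s_{k-1}^2 = 1$ the only factor whose canonical representative actually changes is $r_{k-1}$. This bookkeeping is already done inside the proof of Lemma \ref{lem:canonical_preserve}, so invoking that lemma is what makes the argument short; once it is in hand, the one-line conjugation computation above finishes everything.
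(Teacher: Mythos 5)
Your proof is correct, and it takes a genuinely different route from the paper's. Both arguments start from the same structural input, namely the bookkeeping in Lemma \ref{lem:canonical_preserve} showing that $\rd(\pi)$ and $\rd(f_A(\pi))$ agree in every factor except at the single index $i$, where the factor flips between $1$ and $s_i$; but from that point on the two proofs diverge. The paper proceeds combinatorially: it writes out the one-line notation of the intermediate permutations $w_j$ and $z_j$ of $\pi$ and $f_A(\pi)$, and explicitly tracks the positions of the two letters $a$ and $b$ as the remaining factors $r_{i-1},\dots,r_1$ (each of length at most $1$) are applied, concluding by inspection that the final words differ by swapping $a$ and $b$. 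You instead cancel the common head $w_{i+1}$ and the common tail $r_{i-1}\cdots r_1$ algebraically, reduce everything to the single conjugation $\pi\,f_A(\pi)^{-1}=w_{i+1}\,s_i\,w_{i+1}^{-1}$, and read off $(a,b)=\bigl(w_{i+1}(i),\,w_{i+1}(i+1)\bigr)$ from the identity $\sigma\,(p,q)\,\sigma^{-1}=(\sigma(p),\sigma(q))$. Your version is shorter and delivers both assertions of the lemma in one stroke (indeed it matches the stated position convention of the lemma more exactly than the paper's own proof, which places $b$ at position $i$ and $a$ at position $i+1$ --- harmless, since the transposition is symmetric); its only point of care is checking that the paper's convention (right multiplication by $s_i$ swaps positions, so group multiplication is ordinary function composition) is the one under which the conjugation formula applies, which it is. The paper's letter-tracking, while longer, has the side benefit of exhibiting explicitly where $a$ and $b$ sit in every intermediate permutation $w_j$ and $z_j$, information that is reused in spirit in the height-bound lemma that follows.
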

\begin{proof}
Let $\pi=[r_{n-1}]\dots[r_1]$ and $f_A(\pi)=[t_{n-1}]\dots[t_1]$ be the 
canonical reduced word decompositions of $\pi$ and $f_A(\pi)$ respectively. 
Let $i$ be the only index where they differ. Then, we have $r_i=s_it_i$. 
Furthermore, by our involution, $l(r_j)\le 1$ for all $j\le i$. 
Let $2\le i'<i$ be the largest index such that $r_{i'}=1$. Then, for each 
$i'<j<i$, $r_j=t_j=s_j$. Let $w_j,z_j$, for $1\le j\le n$, be the 
intermediate permutations of $\pi,f_A(\pi)$ respectively. 
Since $r_j=t_j$ for all $j\ge i+1$, we have $w_j=z_j$ for $j\ge i+1$. 

Without loss of generality, let $r_i=1$ and $t_i=s_i$.  Since 
$w_{i+1}=z_{i+1}$, suppose that 
$w_{i+1}=x_1\dots x_{i-1}bax_{i+2}\dots x_n$. This implies 
$w_{i+1}.r_i=w_i=x_1\dots x_{i-1}bax_{i+2}\dots x_n$. Since $r_j=s_j$ for 
$i'<j<i$, $w_{i'}= w_i\displaystyle \prod_{i'<j<i} s_j $ and we get
$w_{i'}=x_1\dots x_{i'}bx_{i'+1}\dots x_{i-1}ax_{i+2}\dots x_n$.

Similarly, since $t_i=s_i$, we have $z_i=x_1\dots x_{i-1}abx_{i+2}\dots x_n$.
$z_{i'}=z_i\displaystyle \prod_{i'<j<i} s_j$ and so
$z_{i'}=x_1\dots x_{i'}ax_{i'+1}\dots x_{i-1}bx_{i+2}\dots x_n$.  By our
choice of $i'$, $r_{i'}=t_{i'}=1$ and for $1\le j < i'$, the
transpositions $r_j,t_j$ act only on the subword $x_1\dots x_{i'}$ for
$z_{i'}$ and $w_{i'}$ respectively. Therefore, $\pi$ and $f_A(\pi)$ differ
only by the transposition $(a,b)$ and $a,b$ are the letters at 
positions $i,i+1$ respectively in $w_{i+1}$.
\end{proof}

\section{Proof of Theorem \ref{thm: both-stat}}
\label{sec:proof_typea}

Recalling the definition of intermediate
permutations from Subsection \ref{subsec:interm_elem},
we first prove the following lemma.  
\begin{lemma}
\label{lem:increase_typeA}
Let $W = \SSS_n$ 
and let $w \in W$ have 
intermediate elements $w_i$ for $i \in [n]$.  For $i \in [n]$, let 
the one line notation of $w_i$ be
$w_i = a_1,a_2,\ldots,a_n$.  Then, $a_1 < a_2 \cdots < a_i$.  
\end{lemma}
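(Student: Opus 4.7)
My plan is to proceed by reverse induction on $i$, from $i = n$ down to $i = 1$. The base case $i = n$ is immediate: by the boundary condition in the definition of intermediate elements, $w_n$ is the empty product and hence equals $\id$ (for $\SSS_n$, which has $n-1$ generators), so its one-line notation is $1, 2, \ldots, n$ and the first $n$ entries are strictly increasing.

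For the inductive step, suppose the claim holds for $w_{i+1}$, so that writing $w_{i+1} = a_1, a_2, \ldots, a_n$ we have $a_1 < a_2 < \cdots < a_{i+1}$. By the recursive definition of intermediate elements, $w_i = w_{i+1} \cdot r_i$, where $r_i \in W^{\bkt{i}} = \{1, s_i, s_{i-1} s_i, \ldots, s_1 s_2 \cdots s_i\}$. Thus either $r_i = 1$ or $r_i = s_k s_{k+1} \cdots s_i$ for some $1 \le k \le i$. If $r_i = 1$, then $w_i = w_{i+1}$ and its first $i$ entries $a_1, \ldots, a_i$ form a prefix of the increasing sequence, hence are increasing. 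Otherwise, using that right multiplication $\tau \cdot s_j$ swaps the one-line entries at positions $j$ and $j+1$, a direct bubble-sort style calculation shows that successively applying $s_k, s_{k+1}, \ldots, s_i$ to $w_{i+1}$ moves the entry $a_k$ from position $k$ to position $i+1$ while shifting each of $a_{k+1}, \ldots, a_{i+1}$ one place to the left, yielding
\[
w_i = a_1, \ldots, a_{k-1}, a_{k+1}, a_{k+2}, \ldots, a_{i+1}, a_k, a_{i+2}, \ldots, a_n.
\]
The first $i$ entries $a_1, \ldots, a_{k-1}, a_{k+1}, \ldots, a_{i+1}$ are then obtained from the strictly increasing sequence $a_1 < \cdots < a_{i+1}$ by deleting the single term $a_k$, and so remain strictly increasing. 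This closes the induction.

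I do not foresee any genuine obstacle here: the result is essentially a direct consequence of the structure of $W^{\bkt{i}}$ together with the definition of the intermediate elements. The only mildly technical ingredient is the bubble-sort-style verification of the effect of $s_k s_{k+1} \cdots s_i$ under right multiplication, but this is a completely routine manipulation, already implicit in the ``reverse sorting'' description of $\rd(\pi)$ given in Subsection \ref{subsec:type-a}.
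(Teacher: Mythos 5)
Your proof is correct and follows essentially the same route as the paper: reverse induction on $i$ with base case $w_n=\id$, using the fact that right-multiplication by $r_i = s_k s_{k+1}\cdots s_i \in W^{\bkt{i}}$ cyclically shifts one entry out of the increasing prefix while preserving the relative order of the remaining entries. No gaps.
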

\begin{proof}
The proof is by reverse induction on $i$ with the base case being 
when $i=n$.  As $\pi_n = \id$, the statement of the lemma is 
satisfied.  Let the result be true when $i=k$ and let $\rd(w) = 
[r_{n-1}] [r_{n-2}]  [r_2] [r_1]$.   We will prove that
the claim holds when $i=k-1$.  When $i=k$, if  
$w_k = a_1, a_2, \ldots, a_n$, then by induction,
$a_1 < a_2 < \cdots < a_k$.  As $w_{k-1} = w_k [r_{k-1}]$ and since 
$r_{k-1} \in W^{k-1}$, we see that some element $a_j$ where $j \in [k]$ 
comes to position $k$ after moving the block of elements 
$[a_{j+1},\ldots, a_k]$ one position
to the left.  Thus, if the one-line notation of $w_{k-1}$ is 
$w_{k-1} = a_1', a_2', \ldots, a_n'$, then 
this movement clearly does not alter the relative order among the elements 
$a_1', a_2', \ldots, a_{k-1}'$.  
Thus $a_1' < a_2' < \cdots < a_{k-1}'$, completing the proof.
\end{proof}


\begin{lemma}
  \label{lem:drp_preserve_typeA}
  Let $W = \SSS_n$.   
  For $w \in W$, $\drp(w) = \drp(f_A(w))$.
\end{lemma}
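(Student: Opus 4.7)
The plan is to use Lemma \ref{lemma:transposition-lem} to view $\pi$ and $f_A(\pi)$ as differing by a single value-transposition $(a,b)$ with $b<a$, then combine Lemma \ref{lem:increase_typeA} with an analysis of the intermediate elements to show that every entry of $\pi$ immediately surrounding the positions of $a$ and $b$ is strictly less than $b$. Once these bounds are in hand, the drop contributions around the transposed positions cancel exactly, while everywhere else $\pi$ and $f_A(\pi)$ agree.

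First I would invoke Lemma \ref{lemma:transposition-lem} in the nontrivial case $f_A(\pi)\neq\pi$. Let $i$ be the unique index at which the canonical reduced words of $\pi$ and $f_A(\pi)$ differ; then there exist values $b<a$, equal to the entries at positions $i$ and $i+1$ of $w_{i+1}$, such that $f_A(\pi)=(a,b)\pi$ (as values). Let $p$ and $q$ be the positions of $b$ and $a$ in $\pi$; taking the WLOG of the proof of Lemma \ref{lemma:transposition-lem} (so $r_i=1$ and $t_i=s_i$), we may assume $p<q$. Tracing the intermediates as in Subsection \ref{subsec:interm_elem} --- the block $r_{i-1}=s_{i-1},\dots,r_{i'+1}=s_{i'+1}$ pushes $b$ leftward from position $i$ to position $i'+1$, while the subsequent $r_j$ for $j<i'$ only permute positions $1,\dots,i'$ --- identifies $p=i'+1$, $q=i+1$, and shows that $\pi_{p+1},\dots,\pi_{q-1}$ are precisely the entries at positions $i'+1,\dots,i-1$ of $w_{i+1}$.

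Next I would apply Lemma \ref{lem:increase_typeA} to $w_{i+1}$, whose first $i+1$ entries are strictly increasing. This immediately gives that $\pi_{p-1}$ (if $p\ge 2$) and each $\pi_{p+1},\dots,\pi_{q-1}$ is less than $b$, since each of these entries occupies one of positions $1,\dots,i-1$ of $w_{i+1}$ and position $i$ of $w_{i+1}$ is $b$. The key remaining bound is $\pi_{q+1}<b$, which does not follow directly because position $q+1=i+2$ lies outside the first $i+1$ positions of $w_{i+1}$. To obtain it I would use that, since $f_A$ acts non-trivially on $\pi$, the smallest $j$ with $\ell(r_j)\ge 2$ is $j=i+1$, so $r_{i+1}=s_k s_{k+1}\cdots s_{i+1}$ for some $k\le i$. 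The action of $r_{i+1}$ on $w_{i+2}$ sends the entry at position $k$ of $w_{i+2}$ to position $i+2$ of $w_{i+1}$, and shifts the entries at positions $k+1,\dots,i+2$ one step to the left. Lemma \ref{lem:increase_typeA} applied to $w_{i+2}$ then yields that this moved entry is less than the entry at position $i+1$ of $w_{i+2}$; the latter is exactly $b$ (it lands at position $i$ of $w_{i+1}$ under the shift). Hence $\pi_{q+1}<b$.

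Finally I would compare $\drp(\pi)$ with $\drp(f_A(\pi))$ position by position. Outside the four boundary positions $(p-1,p),(p,p+1),(q-1,q),(q,q+1)$ the two permutations agree and contribute identically. At $(p-1,p)$ and $(q-1,q)$ both max-terms vanish on both sides since $\pi_{p-1},\pi_{q-1}<b<a$. At $(p,p+1)$ the drop contributes $b-\pi_{p+1}$ in $\pi$ versus $a-\pi_{p+1}$ in $f_A(\pi)$ (difference $b-a$), and at $(q,q+1)$ it contributes $a-\pi_{q+1}$ versus $b-\pi_{q+1}$ (difference $a-b$). These cancel exactly, giving $\drp(\pi)=\drp(f_A(\pi))$. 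The consecutive case $q=p+1$ (no middle values) gives the same cancellation between the combined $(p,q)$ drop and the $(q,q+1)$ drop, and the edge cases $p=1$ or no $i'\in[2,i-1]$ with $r_{i'}=1$ are handled by the same tally with obvious notational modifications. The main obstacle is establishing $\pi_{q+1}<b$, which is the one place where Lemma \ref{lem:increase_typeA} applied to $w_{i+1}$ is insufficient and one must descend to $w_{i+2}$ and use the explicit leftmost letter $s_k$ of $r_{i+1}$; once that bound is secured the rest is a mechanical drop computation.
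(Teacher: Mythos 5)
Your proof is correct, but it takes a genuinely different route from the paper's. The paper proves the stronger statement that $\drp(w_j)=\drp(z_j)$ for \emph{every} pair of intermediate permutations, by reverse induction on $j$, tracking how the descent set evolves at each stage via reduced-word and Coxeter-relation arguments (e.g.\ rewriting $fs_{t-1}s_ts_{t-1}$ as $fs_ts_{t-1}s_t$ to locate the descents of $w_{t-1}$). You instead jump straight to the endpoints: Lemma \ref{lemma:transposition-lem} localizes the difference between $\pi$ and $f_A(\pi)$ to a single value-transposition $(a,b)$ at positions $p=i'+1$ and $q=i+1$, and then the whole lemma reduces to checking the four adjacent pairs around $p$ and $q$. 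The decisive inequalities are the same in both arguments --- in particular your bound $\pi_{q+1}<b$, obtained by descending to $w_{i+2}$ and using the leftmost letter $s_k$ of $r_{i+1}$, is exactly the paper's inequality $x_{t+1}<x_{t-1}<x_t$ derived from Lemma \ref{lem:increase_typeA} applied to $w_{t+1}$. What your approach buys is a shorter, non-inductive, purely local computation (and it makes transparent \emph{why} the drops cancel: the two descent tops $b-\pi_{p+1}$ and $a-\pi_{q+1}$ trade $a$ for $b$ in opposite directions); what it costs is a dependence on Lemma \ref{lemma:transposition-lem}, which the paper proves anyway but only exploits later for the Motzkin-path results. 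One small point to tighten: for the bound $\pi_{p-1}<b$ you should say explicitly that the factors $r_j$ with $j<i'$ permute the entries in positions $1,\dots,i'$ among themselves, so $\pi_{p-1}$ is one of $x_1,\dots,x_{i'}$, each of which is less than $x_i=b$ by Lemma \ref{lem:increase_typeA}; the entry at position $p-1$ of $\pi$ need not coincide with the entry at position $p-1$ of $w_{i+1}$. With that clarification, and the routine verification of the $q=p+1$ and $p=1$ edge cases you sketch, the argument is complete.
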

\begin{proof}
	If $f_A(w) = w$, the lemma is trivial.  Thus, assume $f_A(w) = z$, with $z \not= w$.
  Let $\rd(w) = [r_{n-1}] [r_{n-2}]  [r_2] [r_1]$ and 
  $\rd(z) = [q_{n-1}] [q_{n-2}]  [q_2] [q_1]$.  
Let $t$ be the smallest index such that $\ell(r_t) \geq 2$.  We have seen that
$t \geq 2$ and that $r_i = q_i$ for all indices $i \in [n-1]$, $i \not= t-1$.
For $1 \leq i \leq n$, recall the intermediate permutations $w_i$ of $w$ 
and $z_i$ 
of $z$.
We have seen that
either $\ell(r_{t-1}) = 1, \ell(q_{t-1}) = 0$ or vice versa.
Without loss of generality, assume that $\ell(r_{t-1}) = 1$.  Since
  $r_{t-1} \in W^{t-1}$,  $r_{t-1} = s_{t-1}$
  and $q_{t-1} = 1$.  Thus, we have $w_{t-1} = w_t \cdot s_{t-1}$
  and $z_{t-1} = z_t$.  This is depicted pictorially in Figure 
  \ref{fig:eg_intermediate_perms}.
  
\begin{figure}[h]
  \centerline{\includegraphics[scale=0.55]{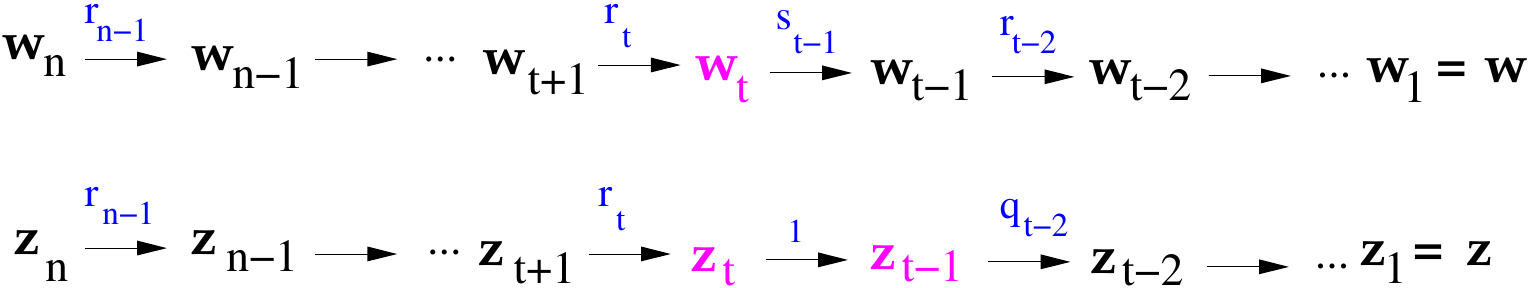}}
  \caption{Intermediate permutations.}
  \label{fig:eg_intermediate_perms}
\end{figure}

We prove the stronger assertion that $\drp(w_i) = \drp(z_i)$
for all $i \in [n]$ by reverse induction on $i$. 
The claim is true when $i=n$ as $w_n = z_n = \id$.  Further, as $r_i = q_i$ 
when $n-1 \geq i \geq t$, we clearly have $w_i = z_i$ when $n \geq i \geq t$.
Hence $\drp(w_i) = \drp(z_i)$ when $t \leq i \leq n$.

Since the canonical reduced word for $w_t$ (and $z_t$) ends in $s_t$, position
$t \in \DescSet(w_t)$ and $t \in \DescSet(z_t)$.
In one-line notation, let $w_t = z_t = z_{t-1} = x_1, x_2, \ldots, x_n$ and 
let $w_{t-1} = y_1, y_2, \ldots, y_n$.  Since $w_{t-1} = w_t \cdot s_{t-1}$, 
we have  
\begin{equation}
  \label{eqn:trio}
  x_t = y_{t-1}, x_{t-1} = y_t \mbox{ and } x_{t+1} = y_{t+1}.
\end{equation}

By Lemma \ref{lem:increase_typeA} applied to $w_t$, we get  $x_1 < x_2 < \cdots < x_t$.
Since a reduced word for $w_{t-1}$ ends in $s_{t-1}$, position $t-1$ is a descent for 
$w_{t-1}$ and thus we have $y_{t-1} > y_t$.  Using (\ref{eqn:trio}) we rephrase this
as $x_t > x_{t-1}$.  Further, as $\ell(r_t) \geq 2$,
and as $w_{t+1} \cdot r_t = w_t$, the element $x_{t+1}$ occurred at an 
index smaller than $t-1$ in $w_{t+1}$, and moved to the $t$-th position in $w_t$.
During this move, clearly, a block of $\ell(r_t)$ elements got shifted by one 
position to the left and $x_{t+1}$ took the position $t+1$.  
Applying Lemma  \ref{lem:increase_typeA} to $w_{t+1}$, we get

\begin{equation}
  \label{eqn:trio_reln}
x_{t+1} < x_{t-1} < x_t.
\end{equation}

Note that (\ref{eqn:trio_reln}) is about the relative order of elements of $w_t$.
Further, as a reduced word for $w_t$ ends in $s_t$, position $t$ will be a descent of $w_t$.
Thus, $\drp(w_t) = (x_t - x_{t+1}) + D$ where $D = \sum_{ c \in \DescSet(w_t) - \{t\}} 
(x_c - x_{c+1})$.  Since $w_t \cdot s_{t-1} = w_{t-1}$, by Lemma  
\ref{lem:canonical_preserve}, $\rd(w_{t-1}) = [r_{n-1}]\cdot[r_{n-2}] \cdots [r_t][r_{t-1}]$.
Since $\ell(r_t) \geq 2$, $r_t$ ends with $s_{t-1}s_t$.  Since $r_{t-1} = s_{t-1}$, a 
reduced word for $w_{t-1}$ ends with $s_{t-1} s_t s_{t-1}$.  Thus, some reduced word
for $w_{t-1}$ has the form $f s_{t-1} s_t s_{t-1}$ for some $f$ written as a product 
of generators.  Using the Coxeter relations, this reduced word can be changed to 
$f s_t s_{t-1} s_t$.  Thus, there are two reduced words for $w_{t-1}$, one which ends
with $s_t$ and another which ends with $s_{t-1}$.  Thus $t-1, t \in \DescSet(w_{t-1})$.
Clearly, the other descents of $w_{t-1}$ are identical to that of $w_t$.  Thus.
$w_{t-1}$ has descents at positions $t-1, t$ and at $k \in \DescSet(w_t) - \{t\}$.
Thus 
\begin{eqnarray*}
\drp(w_{t-1}) & = & (y_{t-1} - y_t) + (y_t - y_{t+1}) + D  
					 =  (y_{t-1} - y_{t+1}) + D \\
					 & = & (x_t - x_{t+1}) + D = \drp(w_t) = \drp(z_{t-1})
\end{eqnarray*}
where we have used (\ref{eqn:trio}) in the last line.  

We next show that $\drp(w_{t-2}) = \drp(z_{t-2})$.  When $k > 2$, 
the same argument will show that $\drp(w_{t-k}) = \drp(z_{t-k})$.  
Since $t$ is the smallest 
index with $\ell(r_t) \geq 2$, whenever $k \geq 2$, we will have $\ell(r_{t-k}) = 0/1$.
If $\ell(r_{t-2}) = 0$, then $r_{t-2} = 1$ and there is nothing to prove.  
Thus, we assume that $r_{t-2} = q_{t-2} = s_{t-2}$.  


Lemma \ref{lem:increase_typeA} applied to $z_t$ gives $x_{t-2} < x_{t-1}$.
As $z_{t-2} = z_t \cdot s_{t-2}$, these are precisely the elements swapped 
in $z_t$ to get $z_{t-2}$.  Thus, 
$\DescSet(z_{t-2}) = \DescSet(z_t) \cup \{t-2 \}$. 
Similarly, we claim that
$\DescSet(w_{t-2}) = \DescSet(w_t) \cup \{ t-2\}$.  
To see this, recall that as $w_t = x_1,x_2,\ldots,x_n$, 
by  (\ref{eqn:trio_reln}), we have $x_{t+1} < x_{t-1} < x_t$.  
If $w_{t-2} = y_1',y_2', \ldots, y_n'$, 
as $w_{t-2} = w_t \cdot s_{t-1} \cdot s_{t-2}$, we get 

\begin{equation}
  \label{eqn:quadro}
y'_{t+1} = x_{t+1},
y_{t-2}' = x_t, y_{t-1}' = x_{t-2}, \mbox{ and }
y'_t = x_{t-1}.  
\end{equation}

Further, if $z_{t-2} = x_1', x_2', \ldots, x_n'$, as 
$z_{t-2} = z_t \cdot s_{t-2}$, we have 
$x'_{t-2} = x_{t-1}$, $x'_{t-1} = x_{t-2}$ and $x'_t = x_t$.  
In $w_t$, since position $t$ is a descent, we have $x_t > x_{t+1}$ and let 
$D = \sum_{q \in \DescSet(w_t) - \{t\} } (x_q - x_{q+1})$.
Recall that $\DescSet(z_{t-2}) = \DescSet(z_t) \cup \{t-2 \}$
and that $\DescSet(w_{t-2}) = \DescSet(w_t) \cup \{ t-2\}$.  Thus,
\begin{eqnarray*}
  \drp(z_{t-2}) & = & \drp(z_t) + (x'_{t-2} - x'_{t-1}) 
  	= \drp(z_t) + (x_{t-1} - x_{t-2} ) \\
  & = & D + (x'_t - x'_{t+1}) + (x_{t-1} - x_{t-2})  =  D + (x_t - x_{t+1}) + (x_{t-1} - x_{t-2}) \\
  & = &  D + (x_t - x_{t-2}) + (x_{t-1} - x_{t+1}) 
   =  D + (y'_{t-2} - y'_{t-1}) + (y'_t - y'_{t+1}) \\
& = & \drp(w_{t-2})
\end{eqnarray*}

Where, the last line follows from (\ref{eqn:quadro}).
An equivalent argument which we omit shows that $\drp(w_{t-k}) = \drp(z_{t-k})$ for $k > 2$,
completing the proof.
\end{proof}

\subsection{The FZ map and height in Motzkin paths}
\label{sec:Motzkin-paths}
Before we state our main result of this section, we need some definitions.
For a permutation $\pi\in \SSS_n$, we call an index $1\le i\le n$ to be
\begin{itemize}
    \item a {\it cyclic peak} denoted as $i\in \mathrm{CPk}(\pi)$ if $\pi^{-1}(i)<i>\pi(i)$,
    \item a {\it cyclic valley} denoted as $i\in \mathrm{CVal}(\pi)$ if $\pi^{-1}(i)>i<\pi(i)$,
    \item a {\it cyclic double ascent} denoted as $i\in \mathrm{Cda}(\pi)$ if $\pi^{-1}(i)<i<\pi(i)$,
    \item a {\it cyclic double descent} denoted as $i\in \mathrm{Cdd}(\pi)$ if $\pi^{-1}(i)>i>\pi(i)$,
    \item a {\it fixed point} denoted as $i\in \mathrm{Fix}(\pi)$ if $\pi(i)=i$.
    \end{itemize}
\begin{definition}
A $2$-Motzkin path of length $n$ is a word $w=s_1s_2\dots s_n$ on 
the alphabet $\{ \mathrm{N,S,E,dE} \}$ (abbreviations for 
North, South, East and dotted East respectively)
such that for each $i$, the height or level $h_i(w)$ of the 
$i^{th}$ step of $w$, defined by 
$$h_i(w)=\#\{j\vert j<i, s_j=\mathrm{N} \}-\#\{j\vert j<i,s_j=\mathrm{S} \}$$ 
is non-negative and is equal to zero when $i=n$.
\end{definition}

\begin{definition} 
A restricted Laguerre history of length $n$ is an ordered pair 
$(\bs,\bp)$, 
where $\bs = (s_1,s_2,\ldots,s_n)$ is a $2$-Motzkin path of length 
$n$
 and 
$\bp=(p_1,\dots,p_n)\in \mathbb{N}^n$ satisfies $0\le p_i\le h_{i}(\textbf{s})$ 
when $s_i=\mathrm{N},\mathrm{E}$ and $0\le p_i \le h_i(\textbf{s})-1$ when 
$s_i=\mathrm{S},\mathrm{dE}$. We denote the set of restricted Laguerre 
histories of length $n$ by $\lh_n^*$.
\end{definition}

We refer to the $2$-Motzkin path $\textbf{s}$ of a restricted Laguerre 
history $(\textbf{s},\textbf{p})$, as its shape, 
denoted by $\mathrm{sh}(\textbf{s},\textbf{p})$, and the 
vector $\textbf{p}=(p_1,\dots,p_n)$ as the labels of the 
restricted Laguerre history, denoted by 
$\mathrm{h}(\textbf{s},\textbf{p})$. 

Next, we recall the famous Foata-Zeilberger bijection \cite{foaza-denert-indeed} 
between $\SSS_n$ and  $\lh_n^*$ using cyclic statistics.  
Given $\sigma \in \SSS_n$, define 
$\phi_{FZ}(\sigma)=(\textbf{s},\textbf{p}) \in \lh_n^*$ where for $i=1,2,\dots,n$,
\[
s_i=
\left\{
\begin{array}{ll}
\mathrm{N}  & \mbox{if } i\in \mathrm{Cval}(\sigma), \\
\mathrm{S}  & \mbox{if } i\in \mathrm{Cpk}(\sigma), \\
\mathrm{E}  & \mbox{if } i\in \mathrm{Cda}(\sigma)\cup \mathrm{Fix}(\sigma), \\
\mathrm{dE} & \mbox{if } i\in \mathrm{Cdd}(\sigma),
\end{array}
\right.\]
with $p_i=\mathrm{nest}_i(\sigma)$ where 
$\mathrm{nest}_i(\sigma)= | \{j\vert j<i<\sigma(i)<\sigma(j) \text{ or } \sigma(j)<\sigma(i)\le i<j \}|.$
Define $\mathrm{nest}(\sigma)=\sum_{i=1}^n \mathrm{nest}_i(\sigma)$.
To determine the cyclic statistics of $f_A(\pi) \in \SSS_n$, we need 
a few lemmas.

\begin{lemma}
\label{lemma: greater-than-lemma}
For $1\le i\le n$, if $w_i$ are the intermediate permutations 
of $\pi\in \SSS_n$, then $w_i(j)\ge j$ for $1\le j\le i$.
\end{lemma}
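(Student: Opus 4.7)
The plan is to prove this by reverse induction on $i$, mirroring the induction used for Lemma \ref{lem:increase_typeA}. The base case is $i=n$: we have $w_n = \id$, so $w_n(j)=j\ge j$ holds trivially for all $1\le j\le n$.

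For the inductive step, I would assume $w_{i+1}(j)\ge j$ for all $1\le j\le i+1$ and show the same for $w_i$. By definition $w_i = w_{i+1}\cdot r_i$ with $r_i\in W^{\bkt{i}}$, so either $r_i=1$, in which case $w_i=w_{i+1}$ and the claim is immediate, or $r_i=s_k s_{k+1}\cdots s_i$ for some $1\le k\le i$. Writing $w_{i+1}=a_1,a_2,\ldots,a_n$ in one-line notation, successive right multiplication by $s_k,s_{k+1},\ldots,s_i$ shifts the block $a_{k+1},\ldots,a_{i+1}$ one position to the left and relocates $a_k$ to position $i+1$, yielding
\[
w_i \;=\; a_1,\ldots,a_{k-1},\,a_{k+1},a_{k+2},\ldots,a_{i+1},\,a_k,\,a_{i+2},\ldots,a_n.
\]
For $1\le j<k$ we have $w_i(j)=a_j\ge j$ by the inductive hypothesis, and for $k\le j\le i$ we have $w_i(j)=a_{j+1}\ge j+1>j$ by the inductive hypothesis applied at index $j+1\le i+1$. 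Either way $w_i(j)\ge j$, which closes the induction.

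Alternatively, one can deduce the lemma directly from Lemma \ref{lem:increase_typeA}: since the last $n-i$ positions of $w_i$ already coincide with $\pi$, the first $i$ entries of $w_i$ form the set $\{1,\ldots,n\}\setminus\{\pi_{i+1},\ldots,\pi_n\}$, and by Lemma \ref{lem:increase_typeA} they appear in increasing order. The $j$-th smallest element of any size-$i$ subset of $\{1,\ldots,n\}$ is at least $j$, because the set $\{1,\ldots,j-1\}$ contributes at most $j-1$ elements smaller than $j$. There is no real obstacle; the only delicate point is tracking the action of the reverse-sorting factor $r_i=s_k s_{k+1}\cdots s_i$ on one-line notation, but this is routine.
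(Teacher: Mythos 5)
Your proof is correct and matches the paper's intent: the paper's entire proof of this lemma is the single sentence ``This follows from the proof of Lemma \ref{lem:increase_typeA},'' and your reverse induction on $i$, tracking how the factor $r_i=s_ks_{k+1}\cdots s_i$ shifts the block $a_{k+1},\ldots,a_{i+1}$ left, is precisely that proof adapted to carry the inequality $w_i(j)\ge j$. Your alternative deduction from the \emph{statement} of Lemma \ref{lem:increase_typeA} (the first $i$ entries are increasing, so the $j$-th entry is the $j$-th smallest element of an $i$-element subset of $[n]$ and hence at least $j$) is also valid and is arguably the cleanest justification of the paper's one-line citation.
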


\begin{proof}
This follows from the proof of Lemma \ref{lem:increase_typeA}.
\end{proof}

We use Lemma \ref{lemma: greater-than-lemma} to bound the $a,b$ 
that appear in Lemma $\ref{lemma:transposition-lem}$.
\begin{lemma}
Let $i$ be the index where $\pi, f_A(\pi)$ differ in their 
canonical reduced word decomposition and let $a,b$ be as in 
Lemma $\ref{lemma:transposition-lem}$, then $a\ge i+1$ and $b\ge i+2$.  
\end{lemma}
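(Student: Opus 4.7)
The plan is to identify the letters $a,b$ at positions $i, i+1$ of $w_{i+1}$ with the letters at positions $i+1, i+2$ of the previous intermediate permutation $w_{i+2}$, and then apply Lemma \ref{lemma: greater-than-lemma} to $w_{i+2}$ (rather than to $w_{i+1}$), which supplies the extra $+1$ we need.

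First I would pin down which factor the involution $f_A$ actually acts on. Using the alternate description of $f_A$ in Subsection \ref{subsec:invol_typeA}, the canonical reduced words of $\pi$ and $f_A(\pi)$ differ at index $i$ precisely because $i+1$ is the smallest index $j$ with $\ell(r_j) \geq 2$. In particular, $r_{i+1}\in W^{\langle i+1\rangle}$ has the form
$$r_{i+1}=s_{i+1-k}\,s_{i+2-k}\cdots s_{i}\,s_{i+1}$$
for some $k\ge 1$, and so ends with the consecutive pair $s_{i}s_{i+1}$.

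Next I would track the effect of right-multiplying $w_{i+2}$ by $r_{i+1}$ one adjacent transposition at a time. The successive swaps carry the letter originally at position $i+1-k$ of $w_{i+2}$ all the way to position $i+2$ of $w_{i+1}$, while each letter at positions $i+2-k, i+3-k, \ldots, i+2$ slides exactly one step to the left. Since $k\ge 1$, this immediately yields the identifications
$$w_{i+1}(i)=w_{i+2}(i+1) \quad \text{and} \quad w_{i+1}(i+1)=w_{i+2}(i+2).$$

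Finally, applying Lemma \ref{lemma: greater-than-lemma} to $w_{i+2}$ gives $w_{i+2}(i+1)\ge i+1$ and $w_{i+2}(i+2)\ge i+2$, from which $a\ge i+1$ and $b\ge i+2$ follow at once. The main obstacle is really the bookkeeping in the first step: one must correctly identify that the involution's ``target'' factor is $r_{i+1}$ rather than $r_i$, since the canonical-word index that toggles under $f_A$ is one less than the index of the length-at-least-two factor that triggers the move. Once this is pinned down, the rest reduces to a routine shift computation and a single invocation of the greater-than lemma.
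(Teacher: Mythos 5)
Your proposal is correct and follows essentially the same route as the paper: both identify $a=w_{i+1}(i)$ and $b=w_{i+1}(i+1)$ with the letters at positions $i+1$ and $i+2$ of $w_{i+2}$ (using $\ell(r_{i+1})\ge 2$ to see that the block shift moves them one step left), and then apply Lemma \ref{lemma: greater-than-lemma} to $w_{i+2}$. Your write-up merely makes the shift bookkeeping more explicit than the paper does.
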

\begin{proof} 
By Lemma \ref{lemma:transposition-lem}, we know that $a,b$ are in 
positions $i,i+1$ respectively of $w_{i+1}$. Without loss of generality, 
assume that $a$ is in position $i$ and $b$ is in position $i+1$. 
We know that $w_{i+1}=w_{i+2}.r_{i+1}$ and $l(r_{i+1})\ge 2$ 
(by the definition of $f_A$). Since $l(r_{i+1})\ge 2$, $a$ had 
to have been in the position $i+1$ and $b$ had to be in position 
$i+2$ of $w_{i+2}$ because $a$ is moved to position $i$ and $b$ 
moved to position $i+1$ in $w_{i+1}$. Therefore, by Lemma 
\ref{lemma: greater-than-lemma} for $w_{i+2}$, we get 
$a\ge i+1$ and $b\ge i+2$. This completes the proof.
\end{proof}

With these lemmas in hand, we now prove our 
main result of this section.  If $i\in \mathrm{CPk}(\pi)$
(respectively $\mathrm{CVal}(\pi),\mathrm{Cda}(\pi)\cup \mathrm{Fix}(\pi),\mathrm{Cdd}(\pi)$), 
then we will show that $i\in \mathrm{CPk}(f_A(\pi))$ (respectively $\mathrm{CVal}(f_A(\pi)), \mathrm{Cda}(f_A(\pi))\cup 
\mathrm{Fix}(f_A(\pi)),\mathrm{Cdd}(f_A(\pi))$). This will prove that the shape of 
the associated 
restricted Laguerre histories are the same.

\begin{proposition}
\label{thm: shape} 
Let $\pi \in \SSS_n$, and $f_A$ denote our involution defined
in Subsection \ref{subsec:invol_typeA}.  Then,
$$\mathrm{sh}(\phi_{FZ} (\pi))=\mathrm{sh}(\phi_{FZ}(f_A(\pi))).$$
\end{proposition}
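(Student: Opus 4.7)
My plan is to show that for each index $k \in [n]$, the $2$-Motzkin step at position $k$ is identical in $\phi_{FZ}(\pi)$ and $\phi_{FZ}(f_A(\pi))$; since the shape is precisely the sequence of these $n$ step-types, the proposition follows. The case $f_A(\pi) = \pi$ is trivial, so I may assume $\sigma := f_A(\pi) \neq \pi$. By Lemma \ref{lemma:transposition-lem}, $\pi$ and $\sigma$ differ only by swapping the two values of a transposition $(a,b)$, where $a, b$ are the letters at positions $i, i+1$ of the intermediate permutation $w_{i+1}$. Taking $a = w_{i+1}(i)$ and $b = w_{i+1}(i+1)$, Lemma \ref{lemma: greater-than-lemma} applied to $w_{i+2}$ gives $a \geq i+1$ and $b \geq i+2$, and tracing the intermediate permutations from $w_{i+1}$ down to $w_1$ shows that in $\pi$ the values $a, b$ sit at positions $\alpha := i'+1$ and $\beta := i+1$, while in $\sigma$ they swap positions. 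In particular $\alpha < \beta \leq a$ and $\beta < b$.

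The cyclic type of an index $k$ depends only on the signs of $k - \pi^{-1}(k)$ and $\pi(k) - k$, so it can change between $\pi$ and $\sigma$ only when $k \in \{\alpha, \beta, a, b\}$. A short case check, using the inequalities $\alpha < \beta \leq a$ and $\beta < b$, handles all but one sub-case. At $k = \alpha$ both $\pi(\alpha), \sigma(\alpha) > \alpha$, and $\alpha \notin \{a, b\}$ forces $\pi^{-1}(\alpha) = \sigma^{-1}(\alpha)$, giving the same cyclic type. At $k = b$ (where $b > \beta > \alpha$) and at $k = a$ when $a > \beta$, the index $k$ lies outside $\{\alpha, \beta\}$, so $\pi(k) = \sigma(k)$, and both $\pi^{-1}(k), \sigma^{-1}(k) \in \{\alpha, \beta\}$ are strictly below $k$, again giving the same type. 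For $k = \beta$ with $a > \beta$, the preimage $\pi^{-1}(\beta) = \sigma^{-1}(\beta)$ is unchanged and both $\pi(\beta), \sigma(\beta) > \beta$, so the type is preserved.

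The one genuinely subtle case is $k = \beta = a$, which can occur since $a \geq i+1 = \beta$. Then $\sigma(\beta) = a = \beta$, so $\beta$ is a fixed point of $\sigma$, whereas in $\pi$ we have $\pi^{-1}(\beta) = \alpha < \beta$ and $\pi(\beta) = b > \beta$, making $\beta$ a cyclic double ascent of $\pi$. Thus the cyclic types genuinely differ, but crucially both $\mathrm{Cda}$ and $\mathrm{Fix}$ are assigned the same East step $\mathrm{E}$ by $\phi_{FZ}$, so the Motzkin step at position $\beta$ is still the same. I expect this coincidence---that $\phi_{FZ}$ collapses cyclic double ascents and fixed points into a single $\mathrm{E}$-step---to be the main (and essentially only) subtle point of the proof; it also foreshadows why only the shape, and not the full history, is preserved by $f_A$.
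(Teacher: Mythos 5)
Your proposal is correct and follows essentially the same route as the paper's proof: it invokes Lemma \ref{lemma:transposition-lem} and the bounds $a\ge i+1$, $b\ge i+2$, reduces the check to the handful of indices whose cyclic type could change, and resolves the one genuine discrepancy (the index that is a cyclic double ascent in one permutation and a fixed point in the other) by noting that $\phi_{FZ}$ assigns both the same step $\mathrm{E}$. Your identification of $\{\alpha,\beta,a,b\}$ as the only indices needing inspection is a slightly tighter organization than the paper's three-case split, but the substance is identical.
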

\begin{proof} 
We show that if $i\in \mathrm{CPk}(\pi)$ (respectively 
$\mathrm{CVal}(\pi),\mathrm{Cda}(\pi)\cup \mathrm{Fix}(\pi),\mathrm{Cdd}(\pi)$), then 
$i\in \mathrm{CPk}(f_A(\pi))$ (respectively $\mathrm{CVal}(f_A(\pi)),\mathrm{Cda}(f_A(\pi))
\cup \mathrm{Fix}(f_A(\pi)),\mathrm{Cdd}(f_A(\pi))$).  Let $a,b$ be such that $(a,b)f_A(\pi)
=\pi$. Without loss of generality, assume that $a$ appears to the left of $b$ in the one 
line notation of $\pi$. Thus, we have 
\begin{eqnarray*}
\pi & = & \pi(1),\dots,a,\dots,\pi(i),b,\pi(i+2),\dots,\pi(n) \mbox{ and } \\
f_A(\pi) & = & \pi(1),\dots,b,\dots,\pi(i),a,\pi(i+2),\dots,\pi(n).
\end{eqnarray*}
If $\pi^{-1}(a)=i'$, then $\pi$ and $f_A(\pi)$ 
are different only at the indices $i',i+1$. Therefore, we have the following simple relations. If $j\not \in \{i',i+1\}$, then $\pi(j)=f_A(\pi)(j)$ and 
if $j\not \in \{a,b\}$, then $\pi^{-1}(j)=f_A(\pi)^{-1}(j)$.
To prove the claim, we only need to check for the 
following three cases.
\begin{enumerate}
\item {\bf Case 1, when $j=i+1$ or $j=i'$ :}
Let $j=i'$, then $\pi(j)=a$. Since $a\ge i+1$, by Lemma 
\ref{lemma: greater-than-lemma} and $j=i'\le i$, we have $j<\pi(j)=a$. 
If $\pi(j)=a$, then $f_A(\pi)(j)=b$.  We have $j<b=f_A(\pi)(j)$. 
Since $i'<a$ and $i'<b$, $j=i'$ is not equal to either $a$ or $b$ 
and therefore, $\pi^{-1}(j)=f_A(\pi)^{-1}(j)$. These produce the 
same letters under $\phi_{FZ}$.
    
Let $j=i+1$, then $\pi(j)=b$. Since $j< b$, it can be $a$. If not, we 
have $\pi^{-1}(j)=f_A(\pi)^{-1}(j)$.  If $a=i+1=j$, then 
$f_A(\pi)^{-1}(a)=a=f_A(\pi)(i+1)=a$ and $i'=\pi^{-1}(a)<i+1<\pi(i+1)=b$, 
then they both produce the same letters under $\phi_{FZ}$ as 
cyclic double ascents and fixed points both produce the same letter.

\item {\bf Case 2, when $j=a$ or $j=b$: }
If $j=a$, then $j\ge i+1$ and we further split into two cases 
$a=i+1$ and $a\ge i+2$. The former case was covered earlier in Case 1. 
For the latter case, clearly, the letters $\pi(j)$ will coincide 
for $\pi$ 
and $f_A(\pi)$ when $j\ge i+2$. Therefore, we have 
$\pi(a)=f_A(\pi)(a)$. 
Also, $\pi^{-1}(a)= i' <i+1 \le a$ and $f_A(\pi)^{-1}(a)=i+1< 
i+2\le a$. Thus, they produce the same letters under the map $\phi_{FZ}$. 

Suppose $j=b$, then $j\ge i+2$ and the letters $\pi(j)$ 
coincide for $\pi$ and $f_A(\pi)$ for $j\ge i+2$.  Further, 
$\pi^{-1}(b)= i+1 < b$ and $f_A(\pi)^{-1}(b)=i'< i+2\le b$. 
Thus, they produce the same letters under the map $\phi_{FZ}$.

\item {\bf Case 3, when $j=\pi(a)$ or $j=\pi(b)$ (i.e. $j=f_A(\pi)(a)$ or $j=f_A(\pi)(b)$):} 
Since $a\ge i+1$, $\pi(a)=f_A(\pi)(a)$ and thus we have 
$\pi^{-1}(j)=f_A(\pi)^{-1}(j)=a$. If $j$ is neither $i+1$ nor $i'$, 
then, $\pi(j)=f_A(\pi)(j)$. These $j$ produce the same letters 
under $\phi_{FZ}$. If we have $j=i'$, we then have $a> i' < a$ 
and $f_A(\pi)^{-1}(i')=a > i' < f_A(\pi)(i')=b$. 
Therefore, they produce the same letters.
    
If we assume $j=i+1$, either $a=i+1$ or $a > i+1 <a$. The case 
when $a=i+1$ has been covered earlier in Case 1. 
If $a>i+1$, then $a > i+1<f_A(\pi)(i+1)=a$. They both have cyclic 
valleys at $i+1$ and therefore, produce the same letters under 
$\phi_{FZ}$.

Since $b \ge i+2$, $\pi(b)=f_A(\pi)(b)$ we have $\pi^{-1}(j)=
f_A(\pi)^{-1}(j)=b$. If $j$ is not $i+1$ or $i'$, then 
$\pi(j)=f_A(\pi)(j)$.  These $j$ produce the same letters 
under $\phi_{FZ}$. If we have $j=i'$, we have $\pi^{-1}(i')=b> i' 
< \pi(i')=a$ and $f_A(\pi)^{-1}(i')=b > i' < f_A(\pi)(i')=b$. Therefore, 
they produce the same letters. If we have $j=i+1$, then 
$b=\pi^{-1}(i+1) > i+1 < \pi(i+1)=b$ $a \ge i+1<f_A(\pi)(i+1)=a$. 
However, the case when $j=a=i+1$ has been covered 
in Case 1. They both have cyclic valleys at $i+1$ and 
therefore, produce the same letters under $\phi_{FZ}$.
\end{enumerate}
 The rest of the indices have the same values and positions in $\pi, f_A(\pi)$, by Lemma \ref{lemma:transposition-lem}. This completes the proof.
\end{proof}

Petersen and Guay-Paquet in \cite{petersen-guay-paquet-displacement} 
defined a map $\phi_{\Pet}$ from $\SSS_n$ to the set $\motz_n$ of Motzkin paths 
of length $n$ that is closely related to the Foata-Zeilberger bijection. 
In a Motzkin path 
we do not differentiate between the $\mathrm E$ and $\mathrm {dE}$ steps
unlike a $2$-Motzkin path.
Let $\phi_{\Pet}$ be the map described by Petersen and Guay-Paquet.
Petersen and Guay-Paquet show that $\phi_{\Pet}(\pi)=\mathrm{sh}(\phi_{FZ}(\pi))$ 
For $\pi \in \SSS_n$, they show that $\phi_{\Pet}(\pi)$ is not 
injective but is surjective onto $\motz_n$. 
They used it to get a continued fraction expression for the generating
function of the $\dep$ statistic 
using the shape of the Motzkin path corresponding to each permutation.

To determine the number of pre-images a Motzkin path has, 
Petersen and Guay-Paquet defined the {\it weight} of step $p_i$ as
\[
  \omega_i = \begin{cases}
    h_i &\text{if $p_i = N$ or $p_i = S$,} \\
    2h_i + 1 &\text{if $p_i = E$,}
  \end{cases}
\]
and the weight of a path $p = p_1 \cdots p_n$ to be the product 
$\displaystyle \omega(p) = \omega_1 \cdots \omega_n.$
They showed (see \cite[Proposition 4]{petersen-guay-paquet-displacement})
that the weight of a Motzkin path $p$ 
equals the number of permutations in its preimage $\phi^{-1}(p)$.

\begin{proposition}[Petersen and Guay-Paquet]
\label{prop:weight}
Let $p\in \motz_n$. Then 
$$\omega(p)=\vert\{ \pi\in \SSS_n : \phi_{\Pet}(w)=p \}\vert$$    
\end{proposition}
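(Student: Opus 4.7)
The plan is to reduce the count $|\phi_{\Pet}^{-1}(p)|$ to a count of restricted Laguerre histories via the Foata-Zeilberger bijection, and then to evaluate this count step by step along $p$. Since $\phi_{\Pet}(\pi) = \mathrm{sh}(\phi_{FZ}(\pi))$ and an ordinary Motzkin path is recovered from a $2$-Motzkin path by identifying the two east-step types $\mathrm{E}$ and $\mathrm{dE}$, the bijectivity of $\phi_{FZ} : \SSS_n \to \lh_n^*$ immediately yields
\[
|\phi_{\Pet}^{-1}(p)| \;=\; \bigl|\{(\textbf{s},\textbf{p}) \in \lh_n^* : \textbf{s} \text{ projects to } p \text{ under } \mathrm{E}\equiv \mathrm{dE}\}\bigr|.
\]

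I would then enumerate the right-hand side by making the data choices independently at each step $i \in [n]$. Writing $h_i$ for the height of $p$ just before step $i$, the per-step contributions read off directly from the definition of a restricted Laguerre history: if $p_i = \mathrm{N}$, then $s_i = \mathrm{N}$ is forced and the label $p_i$ has $h_i + 1$ admissible values; if $p_i = \mathrm{S}$, then $s_i = \mathrm{S}$ is forced and the label has $h_i$ admissible values; and if $p_i$ is an east step of $p$, then one may take either $s_i = \mathrm{E}$ (with $h_i + 1$ label choices) or $s_i = \mathrm{dE}$ (with $h_i$ label choices), giving $2h_i + 1$ options in total. Since the set of admissible labels at step $i$ depends only on the pair $(h_i, s_i)$, these $n$ choices are mutually independent, and the total count factors as the product over $i$ of the per-step contributions.

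The final step is to identify this product with the weight $\omega(p) = \prod_{i=1}^n \omega_i$ from the statement. The main obstacle I anticipate is a purely bookkeeping one: reconciling the height convention used in the formula for $\omega_i$ (i.e.\ whether $h_i$ should be interpreted as the height immediately before step $i$, immediately after, or the upper of the two endpoints of the step) with the per-step counts obtained above. Once this is matched, no further combinatorial input is required beyond the Foata-Zeilberger bijection itself and the multiplication principle for independent choices.
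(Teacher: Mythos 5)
Your argument is correct, and in fact the paper offers no proof of this proposition at all: it is imported verbatim from Petersen and Guay--Paquet (their Proposition 4), so there is nothing to compare against except the cited source. Your route --- using the bijectivity of $\phi_{FZ}:\SSS_n\to\lh_n^*$ together with the stated identity $\phi_{\Pet}(\pi)=\mathrm{sh}(\phi_{FZ}(\pi))$, and then counting restricted Laguerre histories over a fixed Motzkin path by independent per-step label choices ($h_i+1$ for $\mathrm{N}$ and $\mathrm{E}$, $h_i$ for $\mathrm{S}$ and $\mathrm{dE}$, hence $2h_i+1$ for an east step of $p$) --- is sound; the independence holds because the height sequence is determined by $p$ alone, irrespective of the $\mathrm{E}$ versus $\mathrm{dE}$ choices. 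The bookkeeping issue you flag is real but resolves exactly as you suspect: the $h_i$ appearing in the definition of $\omega_i$ must be read as the \emph{upper} endpoint height of step $i$ (equivalently, the height after an $\mathrm{N}$ step and before an $\mathrm{S}$ step), since otherwise an $\mathrm{N}$ step from height $0$ would give $\omega(p)=0$, contradicting surjectivity of $\phi_{\Pet}$. With that reading your product matches $\omega(p)$ term by term and the proof is complete.
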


The following is an easy corollary of the previous proposition.

\begin{corollary}
For $p \in \motz_n$, if $p$ has maximum height strictly more than 
1, then, under the map $\phi_{\Pet}$, there are even number of 
permutations $\pi \in \SSS_n$ with 
$\phi_{\Pet}(\pi) = p$.
If $p$ has maximum height at most 1, there are odd number of 
permutations $\pi \in \SSS_n$ with 
$\phi_{\Pet}(\pi) = p$.
\end{corollary}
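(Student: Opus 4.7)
The plan is to apply Proposition \ref{prop:weight} directly to reduce the question to the parity of the integer $\omega(p) = \omega_1 \cdots \omega_n$. The key observation is that every $\mathrm{E}$-step contributes an odd factor $2h_i + 1$, so the parity of $\omega(p)$ is determined entirely by the contributions from the $\mathrm{N}$ and $\mathrm{S}$ steps, whose weights equal the corresponding height $h_i$.

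First I would handle the case where the maximum height of $p$ is at most $1$. In this case every $\mathrm{N}$-step and every $\mathrm{S}$-step lies between heights $0$ and $1$, so its weight is $1$, while each $\mathrm{E}$-step has weight $1$ (at height $0$) or $3$ (at height $1$). Every factor in the product $\omega(p) = \omega_1 \cdots \omega_n$ is odd, so $\omega(p)$ is odd, and by Proposition \ref{prop:weight} there are an odd number of preimages of $p$ under $\phi_{\Pet}$.

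Next I would treat the case where the maximum height of $p$ is at least $2$. Since $p$ begins and ends at height $0$ but rises to height at least $2$, it must contain an $\mathrm{N}$-step that takes the path from height $1$ to height $2$. This step contributes a factor of $2$ to $\omega(p)$. A single even factor forces the entire product to be even, so again by Proposition \ref{prop:weight} the number of preimages is even.

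There is no substantive obstacle in the argument; the only point that requires care is the interpretation of the weight of an $\mathrm{N}$- or $\mathrm{S}$-step, because the $\mathrm{E}$-step weights are automatically odd. Once one identifies that a path of max height at least $2$ forces the existence of one step whose weight is $2$, and that paths of max height at most $1$ have every step-weight in the set $\{1,3\}$, the parity conclusion is immediate.
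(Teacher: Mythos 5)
Your argument is correct and is exactly the (unwritten) argument the paper intends: the corollary is stated as an immediate consequence of Proposition \ref{prop:weight}, and your parity analysis of $\omega(p)$ --- all step-weights odd when the maximum height is at most $1$, and a weight-$2$ factor from the step crossing between heights $1$ and $2$ otherwise --- is the standard way to see it. The only point to be careful about is the height convention in the weight formula (the weight of an $\mathrm{N}$- or $\mathrm{S}$-step is the \emph{larger} of its two endpoint heights, as in Petersen--Guay-Paquet), which is precisely the reading under which your ``factor of $2$'' claim holds.
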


Next, we give a bijection between the set of Motzkin paths 
of length $n$ with maximum height at most $1$ and  subsets of $[n]$ 
with even cardinality. This will imply that the set of  Motzkin paths 
of length $n$ with maximum height at most $1$ has cardinality  $2^{n-1}$.
\begin{proposition}
There is a bijection between the number of Motzkin paths 
of length $n$ whose height is at most $1$ and the number of 
subsets of $[n]$ whose cardinality is even.
\end{proposition}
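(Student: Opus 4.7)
The plan is to exhibit an explicit bijection by recording the positions of the non-flat steps of the Motzkin path.

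First I would observe the structural constraint. In a Motzkin path $p=p_1 p_2 \cdots p_n$ with maximum height $\le 1$, an $\mathrm N$ step can occur only at height $0$ and an $\mathrm S$ step only at height $1$. Consequently, reading left to right, the $\mathrm N$ and $\mathrm S$ steps strictly alternate, starting with $\mathrm N$ and ending with $\mathrm S$, and so the total number of non-$\mathrm E$ steps in $p$ is even.

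Next I would define the map $\Phi$ sending such a Motzkin path $p$ to the set $\Phi(p) = \{\, i \in [n] : p_i \in \{\mathrm N,\mathrm S\}\,\} \subseteq [n]$, which has even cardinality by the previous observation. In the other direction, define $\Psi$ on an even subset $T = \{i_1 < i_2 < \cdots < i_{2k}\} \subseteq [n]$ by building a path $q$ whose step at position $i_{2j-1}$ is $\mathrm N$, whose step at position $i_{2j}$ is $\mathrm S$, and all of whose other steps are $\mathrm E$. A short check shows that $q$ has height $0$ on the intervals $[1,i_1-1], [i_2,i_3-1], \ldots, [i_{2k},n]$ and height $1$ on the intervals $[i_1,i_2-1], \ldots, [i_{2k-1},i_{2k}-1]$; thus $q$ is a valid Motzkin path of length $n$ with height at most $1$.

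Finally I would verify that $\Phi$ and $\Psi$ are mutually inverse. The composition $\Phi \circ \Psi$ is immediate from the construction of $q$, since its non-$\mathrm E$ positions are exactly $T$. For $\Psi \circ \Phi$, the alternation observation guarantees that among the indices in $\Phi(p)$ the odd-ranked ones correspond to $\mathrm N$ steps and the even-ranked ones to $\mathrm S$ steps, matching the rule used to build $\Psi(\Phi(p))$. I do not expect any serious obstacle here; the only thing requiring care is confirming both height bounds and the alternation, and both follow directly from the height-$\le 1$ hypothesis. As a byproduct this gives $|\motz_n^{\le 1}| = 2^{n-1}$, matching the count of even subsets of $[n]$.
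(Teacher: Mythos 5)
Your bijection is correct and is essentially the same as the paper's: the paper constructs the path from an even subset $S=\{s_1<\cdots<s_{2k}\}$ by placing $\mathrm N$ at the odd-ranked positions, $\mathrm S$ at the even-ranked positions, and $\mathrm E$ elsewhere, and declares the process reversible. You simply present the same correspondence starting from the path side and spell out the alternation and height checks that the paper leaves implicit.
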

\begin{proof}
Let $S\subseteq [n]$ be such that $|S|$ is even and let $S = \{s_1 \dots s_{|S|}\}_<$ be
$S$ with its elements written in ascending order.   For $1\le i\le\frac{|S|}{2}$, 
declare the $s_{2i-1}$-{th} step of the Motzkin path to be $`\mathrm N'$, the 
$s_{2i}$-th step of the Motzkin path to be $`\mathrm S'$ and the rest of steps 
to be $`\mathrm{E}'$.  This process is clearly reversible and gives
us a bijection.   
\end{proof}
Combining all the results above, we conclude that there is a 
bijection between the fixed points of $f_A$ and Motzkin 
paths whose maximum height is at most $1$.

\begin{proposition}
The fixed points of the involution $f_A$ get mapped bijectively, under $\phi_{\Pet}$, 
to Motzkin paths whose maximum height is at most  $1$.
\end{proposition}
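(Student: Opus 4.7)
The plan is to combine the $f_A$-invariance of the fibers of $\phi_{\Pet}$, a parity count inside each fiber, and a matching cardinality computation on both sides.

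First, I would apply Proposition \ref{thm: shape} to get $\phi_{\Pet}(\pi) = \phi_{\Pet}(f_A(\pi))$ for every $\pi \in \SSS_n$, so every fiber $\phi_{\Pet}^{-1}(p)$ is closed under the involution $f_A$. The non-fixed points of $f_A$ pair up into orbits of size $2$, so $|T \cap \phi_{\Pet}^{-1}(p)|$ has the same parity as $|\phi_{\Pet}^{-1}(p)| = \omega(p)$, the last equality being Proposition \ref{prop:weight}. By its corollary, $\omega(p)$ is odd precisely when the maximum height of $p$ is at most $1$. Hence every ``short'' Motzkin path $p$ (height $\leq 1$) carries an odd, and in particular at least one, $f_A$-fixed preimage, while every ``tall'' path carries an even, a priori possibly zero, number.

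Next, I would count $|T|$ directly. From the alternate description of $f_A$ in Subsection \ref{subsec:invol_typeA}, a permutation $w$ is fixed by $f_A$ iff every factor $r_i$ of $\rd(w) = [r_{n-1}][r_{n-2}]\cdots[r_1]$ satisfies $\ell(r_i) \leq 1$. In $W^{\langle i \rangle}$ the only such elements are $1$ and $s_i$, so each of the $n-1$ factors admits exactly two choices independently, giving $|T| = 2^{n-1}$. The previous proposition exhibits a bijection between short Motzkin paths of length $n$ and even-cardinality subsets of $[n]$, the latter having cardinality $2^{n-1}$.

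Finally, a tight pigeonhole closes the argument:
$$
2^{n-1} \;=\; |T| \;=\; \sum_{p \text{ short}} |T \cap \phi_{\Pet}^{-1}(p)| \;+\; \sum_{p \text{ tall}} |T \cap \phi_{\Pet}^{-1}(p)| \;\geq\; 2^{n-1} + 0,
$$
using that each short summand is at least $1$ and each tall summand at least $0$. Equality throughout forces each short fiber to contain exactly one fixed point and each tall fiber to contain none, which is exactly the claimed bijection. The main subtlety is that this bijection is deduced not by an explicit construction but by a tight cardinality matching, so the key step is to verify both $|T| = 2^{n-1}$ and that the number of short Motzkin paths is also $2^{n-1}$; once these match, the parity inequality is automatically tight.
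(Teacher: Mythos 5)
Your proposal is correct and follows essentially the same route as the paper: use Proposition \ref{thm: shape} to see each fiber of $\phi_{\Pet}$ is $f_A$-stable, deduce via Proposition \ref{prop:weight} and its corollary that exactly the height-at-most-one fibers contain an odd (hence nonzero) number of fixed points, and then match the count $|T|=2^{n-1}$ against the $2^{n-1}$ short Motzkin paths to force the bijection. Your write-up just makes the parity and pigeonhole steps more explicit than the paper does.
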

\begin{proof}We know that the fixed points of the involution $f_A$ are 
permutations whose reduced words $r_i$ satisfy $\ell(r_i)\le 1$. 
Clearly, there are $2^{n-1}$ such permutations. By Proposition \ref{thm: shape}, 
if the cardinality of the set of permutations that map to a particular Motzkin 
path is odd, then it contains at least one permutation that is fixed by $f_A$. There 
are exactly $2^{n-1}$ such paths and exactly $2^{n-1}$ fixed points. Therefore, 
every fixed point must map to a different Motzkin path whose maximum 
height is at most  $1$.
\end{proof}

We, now, finish the proof of Theorem \ref{thm: both-stat}.
\begin{proof}[Proof of Theorem \ref{thm: both-stat}]
Since the involution $f_A$ preserves the shape of the $2$-Motzkin path under the
Foata-Zeilberger bijection, we have $\mathrm{area}(\phi_{FZ}(\pi))
=\mathrm{area}(\phi_{FZ}(f_A(\pi))$.
Thus $\depth(\pi)=\depth(f_A(\pi))$ because the area under the Motzkin path is
the $\depth$ of the permutation. 
Since the shape is preserved, so is the number of $\mathrm{N}, \mathrm{dE}$ steps. The sum of the
number of $\mathrm{N},\mathrm{dE}$ is exactly the number of $\iexc$ ($\exc$ of the inverse of the permutation) of the permutation, and this is also preserved under
the involution. In Lemma \ref{lem:drp_preserve_typeA}, we showed that the map preserves $\drops$ as well.
Denote by $\fix(f_A)$ the permutations fixed under the involution $f_A$. 
From these three observations, we have
\begin{equation*}
  \sum_{\pi \in \SSS_n}(-1)^{\inv(\pi)}t^{\iexc(\pi)}p^{\dep(\pi)}q^{\drops(\pi)} 
  = \sum_{\pi \in \fix(f_A)}(-1)^{\inv(\pi)}t^{\iexc(\pi)}p^{\dep(\pi)}q^{\drops(\pi)}
  \end{equation*}
The permutations in $\fix(f_A)$ are the ones whose canonical reduced word is of the form $s_{i_1}s_{i_2} \cdots s_{i_k}$ where $n > i_1 > i_2
  > \cdots > i_k \geq 1$. Clearly, such permutations are in bijection with subsets $L$ of 
$[n-1]$ and thus $|T_{n-1}| = 2^{n-1}$.
For each $s_{i_j}$, the $\iexc$ increases by $1$. Since these are adjacent transpositions, the $\depth$ and $\drops$
increases by $1$. Therefore, if $\pi \in \fix(f_A)$, then $\depth(\pi)=\drops(\pi)=\iexc(\pi)=\inv(\pi)$. Therefore,
\begin{align*}
      \sum_{\pi \in \fix(f_A)}(-1)^{\inv(\pi)}t^{\iexc(\pi)}p^{\dep(\pi)}q^{\drops\pi)}=(1-tpq)^{n-1}    
\end{align*} 
Let $\rc$ be the reverse-complement bijection. It is easy to check that $\rc$ is sign preserving and $$(\iexc(\pi),\depth(\pi),\drops(\pi)=(\exc(\rc(\pi)),\depth(\rc(\pi)),\drops(\rc(\pi))).$$
Therefore, 
\begin{align*}
    &\sum_{\pi \in \SSS_n}(-1)^{\inv(\pi)}t^{\iexc(\pi)}p^{\dep(\pi)}q^{\drops(\pi)} \\
     & = \! \! \sum_{\pi \in \SSS_n}(-1)^{\inv(\rc(\pi))}t^{\iexc(\rc(\pi))}p^{\dep(\rc(\pi))}q^{\drops(\rc(\pi))}
    \! \!=\! \!\sum_{\pi \in \SSS_n}(-1)^{\inv(\pi)}t^{\exc(\pi)}p^{\dep(\pi)}q^{\drops(\pi)}
\end{align*}  
This finishes the proof.
\end{proof}

\section{Continued fractions for $(\dep,\inv)$ and $(\drops,\mad)$}
\label{sec:cont-frac}

In this section, we calculate a continued fraction expansion for the 
pair $(\drops, \mad)$.
The $\mad$ statistic was defined and shown to be a Mahonian statistic by 
Clarke, Steingr{\'i}mmson and Zeng \cite{clarke-stein-zeng}.  Any $\pi\in \SSS_n$ 
can be uniquely decomposed into \emph{descent blocks} (maximal descending subwords). 
Denote the first and last letter of each block $B$ of length at least two by 
$c(B), o(B)$ respectively. The \emph{right embracing number} of $\pi(i)$ is the 
number of descent blocks strictly to the right of the block 
containing $\pi(i)$ and for which $c(B) > \pi(i) > o(B)$. The sum of all right
embracing numbers is denoted by $\mathrm{Res}(\pi)$. Then, 
$\mad(\pi):=\drops(\pi)+\mathrm{Res}(\pi)$.

We will get a continued fraction expansion for the pair $(\dep,\inv)$ 
via the Foata-Zeilberger bijection and the theory of continued fractions 
due to Flajolet \cite{cont-frac-flaj}.  
Clarke, Steingr{\'i}mmson and Zeng showed bijectively that the pairs 
$(\drops, \mad)$ and $(\dep,\inv)$ are equidistributed over $\SSS_n$.  
Thus, we can recast our results using this pair.  We get a J-type 
continued fraction expansion where the denominators have linear and 
quadratic terms. For some background on continued fractions, we refer 
the reader to the book by Goulden and Jackson \cite{goulden-jackson}.
Define the $(\dep,\inv)$- enumerating polynomial over $\SSS_n$ as 
$$F_n(x,q)=\sum_{w\in \SSS_n} x^{\dep(w)}q^{\inv(w)}.$$
Our result of this section is the following.  It was conjectured by 
Petersen in his email to Zeilberger (see \cite{doron-petersen-email}).

\begin{theorem} 
\label{thm: genfun-dep-inv}
     The ordinary generating function of $F_n(x, q)$ has the following Jacobi
continued fraction expansion:
$$ \sum_{n\geq 0}F_n(x,q)t^n={1\over\displaystyle 1-c_0t- {\strut
b_1t^2\over\displaystyle 1-c_1t- {\strut
b_2t^2\over\displaystyle {}
{\strut \ddots\over\displaystyle 1-c_nt- {\strut
b_{n+1}t^2\over\displaystyle \ddots }}}}}, $$
where $c_n=x^nq^n([n]_q+[n+1]_q)$ and $b_{n+1}=x^{2n+1}q^{2n+1}([n+1]_p)^2$
for $n\geq 0$.
 \end{theorem}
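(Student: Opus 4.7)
The plan is to apply Flajolet's fundamental lemma on Jacobi continued fractions (see \cite{cont-frac-flaj}) in conjunction with the Foata--Zeilberger bijection $\phi_{FZ} : \SSS_n \to \lh_n^*$ introduced in Section \ref{sec:Motzkin-paths}. Flajolet's lemma asserts that if a weighted set of Motzkin paths is obtained by assigning a weight $a_n$ (resp.\ $d_n$, $c_n$) to each N-step leaving height $n$ (resp.\ S-step leaving height $n$, level step at height $n$), then the ordinary generating function for the total weight is precisely a J-fraction of the form in the statement, with $b_{n+1}=a_n\, d_{n+1}$. So the task reduces to showing that, under $\phi_{FZ}$, the joint weight $x^{\dep(\pi)} q^{\inv(\pi)}$ factorises as a product of local step-weights, and that summing over the admissible label values at each step recovers exactly the prescribed $c_n$ and $b_{n+1}$.

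First I would invoke Petersen and Guay-Paquet's identification of $\dep(\pi)$ with the area under the Motzkin path $\mathrm{sh}(\phi_{FZ}(\pi))$, already used in the proof of Theorem \ref{thm: both-stat}. This distributes the power of $x$ across the steps, contributing a factor $x^n$ at each N- or level-step leaving height $n$, and a factor $x^{n+1}$ at each S-step leaving height $n+1$. Next, for the $q^{\inv(\pi)}$ factor, I would use a step-by-step decomposition of $\inv(\pi)$ under $\phi_{FZ}$ (equivalently, obtainable via the Clarke, Steingr{\'i}mmson and Zeng bijection \cite{clarke-stein-zeng} which establishes $(\dep,\inv) \sim (\drops,\mad)$); the contribution at step $i$ will depend only on the height $h_i$ and the label $p_i$, amounting to a factor $q^{h_i+p_i}$.

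Summing over the permitted labels at each step is then a routine $q$-geometric-series computation. A level step at height $n$ contributes
\[
x^n \bigl( \sum_{p=0}^{n} q^{n+p} + \sum_{p=0}^{n-1} q^{n+p} \bigr) \;=\; x^n q^n\bigl([n+1]_q + [n]_q\bigr) \;=\; c_n,
\]
where the two summands correspond to the E and dE cases respectively. A matched N/S pair peaking at height $n+1$ contributes
\[
x^n q^n [n+1]_q \cdot x^{n+1} q^{n+1} [n+1]_q \;=\; x^{2n+1} q^{2n+1} \bigl([n+1]_q\bigr)^2 \;=\; b_{n+1}.
\]
Plugging these weights into Flajolet's fundamental lemma then yields the claimed J-fraction expression.

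The principal obstacle is establishing the local decomposition of $\inv(\pi)$ under $\phi_{FZ}$. The label-sum part is immediate, since $\mathrm{nest}(\pi) = \sum_i p_i$ distributes across steps by definition; the subtler point is isolating the extra $q^{h_i}$ factor at each step (which is what produces the $q^n$ prefactors in $c_n$ and $b_{n+1}$), and verifying that together they account for exactly $\inv(\pi)$ rather than some other Mahonian statistic. This is where one must carefully track inversions arising between cyclic valleys/peaks at different heights, and is the one point where a genuine combinatorial argument is required rather than symbol manipulation.
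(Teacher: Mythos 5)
Your proposal is correct and follows essentially the same route as the paper: identify $\dep$ with the area of the $2$-Motzkin path, use $\inv(\pi)=\mathrm{area}(\phi_{FZ}(\pi))+\mathrm{nest}(\pi)=\sum_i(h_i+p_i)$, sum the labels as $q$-geometric series to get the local weights, and invoke Flajolet's fundamental lemma. The one step you flag as the ``principal obstacle''---the local decomposition of $\inv$ under $\phi_{FZ}$---is exactly the identity the paper takes from de M\'edicis and Viennot \cite{inv-foata-zeil-viennot}, so no new combinatorial argument is needed there.
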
  
\begin{proof}
    For $\pi\in \SSS_n$, if $\phi_{FZ}(\pi)=(\textbf{s},\textbf{p})$, then we have the following.
    \begin{eqnarray*}
    \dep(\pi)&=&\mathrm{area}(\phi_{FZ}(\pi)),  \hspace{5 mm} \mathrm{area}(\phi_{FZ}(\pi))= \sum_{i=1}^{n} h_i(\textbf{s}),\\
    \inv(\pi)&=&\mathrm{area}(\phi_{FZ}(\pi))+\mathrm{nest}(\pi)
    = \sum_{i=1}^n h_i(\textbf{s}) + \mathrm{nest}(\pi) = \sum_{i=1}^n \big(h_i(\textbf{s})+ p_i \big).
    \end{eqnarray*}
The first two equations are well known in the literature, see 
\cite{petersen-guay-paquet-displacement},\cite{clarke-stein-zeng}. 
The third equation was noted by Viennot, de Medicis \cite{inv-foata-zeil-viennot}.
    \begin{eqnarray*}
       F_n(x,q)&=&\sum_{(\textbf{s},\textbf{p})\in \mathcal{LH}_n^*} x^{\mathrm{area}(\textbf{s},\textbf{p})}q^{\mathrm{area}(\textbf{s},\textbf{p})+\sum_{i=1}^n p_i}\\
       &=&\sum_{(\textbf{s},\textbf{p})\in \mathcal{LH}_n^*} \prod_{s_i=\mathrm  N}\! x^{h_i}q^{h_i+p_i}
\prod_{s_i=\mathrm S}\!x^{h_i}q^{h_i+p_i}\prod_{s_i=\mathrm {dE}}\!x^{h_i}q^{h_i+p_i}
\prod_{s_i=\mathrm E}\!x^{h_i}q^{h_i+p_i}\\
\!&=&\!\sum_{\textbf{s}\in 2-\mathrm{Motz}_n}\prod_{s_i=\mathrm{N}}\!x^{h_i}q^{h_i}[h_i+1]_q
\prod_{s_i=\mathrm S}\!x^{h_i}q^{h_i}[h_i]_q
\prod_{s_i=\mathrm{dE}}\!x^{h_i}q^{h_i}[h_i]_q\prod_{s_i=\mathrm E}\!x^{h_i}q^{h_i}[h_i+1]_q 
    \end{eqnarray*}
where 2-$\mathrm{Motz}_n$ is the set of 2-Motzkin paths of length $n$.  
Therefore, by Flajolet's theory of continued fractions \cite{cont-frac-flaj}, 
we have completed our proof.
\end{proof}

By the bijection of Clarke-Steingr{\'i}mmson-Zeng in \cite{clarke-stein-zeng}, we know 
that $(\dep,\inv)$ is equidistributed with $(\drops,\mad)$. 
Therefore,  we have 
$$F_n(x,q)=\sum_{w\in \SSS_n} x^{\dep(w)}q^{\inv(w)}=
\sum_{w\in \SSS_n}x^{\drops(w)}y^{\mad(w)}.$$

As a corollary, we can enumerate the permutations, with respect to $\dep$ and $\inv$, in the pre-image of Motzkin paths under the $\phi_{\Pet}$ map.       
\begin{corollary}
    \label{cor: kyle-inv}
    Let $p\in \mathrm{Motz}_n$. Then
    \begin{equation}
        \sum_{w\in \phi_{\Pet}^{-1}(p)} q^{\inv(w)}x^{\dep(w)} = \prod_{s_i=\mathrm{N}}\!x^{h_i}q^{h_i}[h_i+1]_q
\prod_{s_i=\mathrm S}\!x^{h_i}q^{h_i}[h_i]_q
\prod_{s_i=\mathrm{E}\text{ or }\mathrm{dE}}\!x^{h_i}q^{h_i}([h_i]_q+[h_i+1]_q) 
    \end{equation}
\end{corollary}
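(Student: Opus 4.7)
The plan is to extract this directly from the calculation already carried out inside the proof of Theorem \ref{thm: genfun-dep-inv}, but rather than summing over all of $\mathcal{LH}_n^*$, I would restrict the sum to those restricted Laguerre histories whose shape projects onto the given Motzkin path $p$. The key bridge is the identity $\phi_{\Pet}(\pi) = \mathrm{sh}(\phi_{FZ}(\pi))$ (after identifying the $\mathrm{E}$ and $\mathrm{dE}$ steps of the $2$-Motzkin path), stated by Petersen and Guay-Paquet immediately after Theorem \ref{thm: genfun-dep-inv}. Hence $\pi \in \phi_{\Pet}^{-1}(p)$ if and only if $\phi_{FZ}(\pi) = (\mathbf{s},\mathbf{p})$ with $\mathbf{s}$ obtained from $p$ by choosing, for each horizontal step of $p$, whether the corresponding $2$-Motzkin step is an $\mathrm{E}$ or a $\mathrm{dE}$.

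First I would recall from the proof of Theorem \ref{thm: genfun-dep-inv} the two identities $\dep(\pi) = \sum_i h_i(\mathbf{s})$ and $\inv(\pi) = \sum_i (h_i(\mathbf{s})+p_i)$. Consequently,
\begin{equation*}
q^{\inv(\pi)} x^{\dep(\pi)} \;=\; \prod_{i=1}^n x^{h_i} q^{h_i + p_i}.
\end{equation*}
Next I would sum over the label vector $\mathbf{p}$ allowed by the restricted Laguerre history constraints. For a fixed $2$-Motzkin shape $\mathbf{s}$, the labels at each step are independent, so the sum factorises over the steps, and at step $i$ of height $h_i$ we get $[h_i+1]_q$ when $s_i \in \{\mathrm{N},\mathrm{E}\}$ (labels $0,1,\dots,h_i$) and $[h_i]_q$ when $s_i \in \{\mathrm{S},\mathrm{dE}\}$ (labels $0,1,\dots,h_i-1$). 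This is precisely the step-by-step weight already recorded at the end of the proof of Theorem \ref{thm: genfun-dep-inv}.

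Finally I would sum over the preimage of $p$ under the projection from $2$-Motzkin paths to Motzkin paths. Since an $\mathrm N$ step of $p$ lifts uniquely to an $\mathrm N$ step of $\mathbf{s}$, and similarly for $\mathrm S$, while each horizontal step of $p$ lifts to either $\mathrm E$ or $\mathrm{dE}$, the factorisation over steps continues to hold. At each N-step we get $x^{h_i}q^{h_i}[h_i+1]_q$, at each S-step $x^{h_i}q^{h_i}[h_i]_q$, and at each horizontal step we sum the two lifts to obtain $x^{h_i}q^{h_i}\bigl([h_i]_q + [h_i+1]_q\bigr)$. Taking the product across all steps of $p$ yields exactly the claimed formula.

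There is no real obstacle here since the corollary is essentially a refinement, indexed by Motzkin shape, of the same computation that produced Theorem \ref{thm: genfun-dep-inv}; the only thing I would be careful about is matching the weights on horizontal steps correctly, namely noting that $\mathrm{E}$ contributes $[h_i+1]_q$ (the upper label bound is $h_i$) while $\mathrm{dE}$ contributes $[h_i]_q$ (the upper label bound is $h_i-1$), so that the sum $[h_i]_q + [h_i+1]_q$ is what appears in the formula.
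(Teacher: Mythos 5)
Your proposal is correct and matches the paper's intended argument: the corollary is obtained exactly by restricting the sum in the proof of Theorem \ref{thm: genfun-dep-inv} to the restricted Laguerre histories whose $2$-Motzkin shape projects to the fixed path $p$, and then summing the $\mathrm{E}$ and $\mathrm{dE}$ contributions $[h_i+1]_q$ and $[h_i]_q$ at each horizontal step. The paper gives no separate proof precisely because this is the computation already recorded there, and your careful matching of the label bounds to the two horizontal step types is the only point that needed checking.
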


\section{Type-B Coxeter groups}
\label{sec:type-B}

For $\sigma = \sigma_1, \sigma_2, \ldots, \sigma_n 
\in \BB_n$, let $\Negs(\sigma) = \{i \in [n]: \sigma_i < 0 \}$ be the set 
of indices where $\sigma$ takes negative values and let 
$\nsum(\sigma) = - ( \sum_{i \in \Negs(\sigma)} \sigma_i)$ be the absolute 
value of the sum of the negative components of $\sigma$.  Recall the 
definition of the 
number of type-A inversions of $\sigma$ as 
 $\inva(\sigma) = |\{ 1 \leq i < j \leq n : \sigma_i > \sigma_j \} |$.
Here, comparison is done with respect to the standard order on $\ZZ$.
Define the number of inversions of $\sigma \in \BB_n$ as 
$\invb(\sigma) = \nsum(\sigma) + \inva(\sigma)$.  The above definition
is due to Brenti \cite{brenti-q-eulerian-94} who also shows that 
$\invb(\sigma)$  
is the length of $\sigma$ in the Coxeter-theoretic sense with 
respect to the generators
$s_0, s_1, \ldots, s_{n-1}$.

Let $\sigma = \sigma_1, \sigma_2, \ldots, \sigma_n \in \BB_n$.  Define
$\sigma_0 = 0$ and define $\DescSetB(\sigma) = \{0 \leq i < n: \sigma_i > \sigma_{i+1} \}$.
This combinatorial definition of descents can be found in \cite{bjorner-brenti}.
Recall that $\drpb(\sigma) = \sum_{i \in \DescSetB(\sigma)} (\sigma_i - \sigma_{i+1})$.
If $\sigma \in \BB_n \cap \SSS_n$, then clearly $\drpb(\sigma) = \drp(\sigma)$.
For $n \geq 1$, define 
$\displaystyle \SgnDrpB_n(q) = \sum_{\sigma \in \BB_n} (-1)^{\invb(\sigma)} q^{\drpb(\sigma)}.$  
Our main result is the following type-B counterpart of Theorem \ref{thm:sgn_drp_univ}.  

\begin{theorem}
  \label{thm:sgn_drp_typeB}
	For $n \geq 1$, $\SgnDrpB_n(q) = (1-q)^n$.
\end{theorem}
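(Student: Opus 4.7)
The plan is to mirror the proof of Corollary~\ref{thm:sgn_drp_univ} (which follows from Theorem~\ref{thm: both-stat}), using the sign-reversing involution $g_B$ from Section~\ref{sec:invol}. By Lemma~\ref{lem:sign_rev_b}, $g_B$ already reverses $(-1)^{\invb}$ on its non-fixed points, so the essential missing ingredient is a type-B analog of Lemma~\ref{lem:drp_preserve_typeA}: the involution $g_B$ should preserve $\drpb$.

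This $\drpb$-preservation claim splits according to the two cases in the definition of $g_B$. When $\rd(\sigma)$ contains no $\nml$ factor, $g_B$ appends $s_{t-2}$ to the smallest factor $r_t$ of length $\geq 2$; the braid relation $s_{t-1}s_{t-2}s_{t-1}=s_{t-2}s_{t-1}s_{t-2}$ and re-canonicalization then toggle $r_{t-1}$ between $1$ and $s_{t-2}$. This produces a swap of two adjacent entries in the intermediate permutation $w_{t-1}$, and the local descent-drop argument of Lemma~\ref{lem:drp_preserve_typeA} goes through after shifting indices and accounting for the new descent at position $0$ (with $\sigma_0 = 0$). The harder sub-case is when $g_B$ swaps $u_i$ with $v_i$ at the leftmost $\nml$ factor. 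Using a type-B analog of Lemma~\ref{lem:increase_typeA}, the first $i$ entries of $w_{i+1}$ are positive integers sorted as $a_1 < \cdots < a_i$. Under $u_i$, position $i$ becomes $-a_i$ and the pool $\{a_1,\ldots,a_{i-1}\}$ with maximum $a_{i-1}$ remains for the subsequent reductions $r_{i-1} \cdots r_1$; under $v_i$, position $i$ becomes $-a_{i-1}$ and the pool is $\{a_1,\ldots,a_{i-2},a_i\}$ with maximum $a_i$. Since the two pools have identical sorted order and the subsequent reductions act by rank, $\sigma$ and $g_B(\sigma)$ agree at every position except position $i$ itself and a single position $p < i$ where the pool-maximum is placed; at $p$ the entries $a_{i-1}$ and $a_i$ swap, possibly both carrying a common sign $\epsilon \in \{\pm 1\}$ inherited from the subsequent reductions. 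A case analysis on whether the affected neighbor positions $p-1, p, i-1, i$ are descents then shows the net change in $\drpb$ is zero; in Example~\ref{eg:typeB_one} for instance one checks that both $\sigma$ and $g_B(\sigma)$ have $\drpb = 14$.

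Once $g_B$ is shown to preserve $\drpb$, the sum reduces to $\fix(g_B)$, which consists of those $\sigma$ with no $\nml$ factor and every $\ell(r_j) \leq 1$; equivalently $r_j \in \{1, s_{j-1}\}$ for $j \geq 2$ and $r_1 \in \{1, s_0\}$. This yields a bijection $\sigma_I \leftrightarrow I$ between $\fix(g_B)$ and subsets $I \subseteq [n]$, and since each nontrivial $r_j$ contributes $1$ to the length, $\invb(\sigma_I) = |I|$.

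The final step is to show $\drpb(\sigma_I) = |I|$, which I would prove by induction on $n$. Let $m = \max I$ and let $\{k, k+1, \ldots, m\}$ be the maximal run of consecutive integers in $I$ containing $m$. If $k \geq 2$, a direct calculation from the canonical reduction gives $\sigma_{k-1} = m$ and $\sigma_{k+j} = k+j-1$ for $j = 0, \ldots, m-k$, while positions $1, \ldots, k-2$ of $\sigma_I$ themselves form the fixed point in $\BB_{k-2}$ corresponding to $I \cap [k-2]$; this run contributes a single descent at position $k-1$ of drop $m - k + 1$, and induction closes the step. If $k = 1$, then $I = \{1,\ldots,m\}$ and $\sigma_I = (-m, 1, 2, \ldots, m-1, m+1, \ldots, n)$, whose only descent lies at position $0$ with drop $m = |I|$. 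Summing over $I$,
$$
\SgnDrpB_n(q) = \sum_{I \subseteq [n]} (-q)^{|I|} = (1-q)^n,
$$
as required. The main obstacle is the $u_i \leftrightarrow v_i$ sub-case of the $\drpb$-preservation lemma.
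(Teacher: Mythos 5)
Your proposal follows essentially the same route as the paper: reduce via the sign-reversing involution $g_B$ to its fixed points using a type-B drops-preservation lemma (the paper's Lemma~\ref{lem:drp_preserve_typeB}, proved exactly by the two cases you describe — the type-A-style case and the $u_i \leftrightarrow v_i$ swap), then observe that the $2^n$ fixed points indexed by subsets each contribute $(-q)^{|I|}$. Your run-by-run induction for $\drpb(\sigma_I)=|I|$ is just a more explicit version of the paper's ``cluster the elements of $L$ into consecutive elements'' remark, so there is nothing materially different to flag.
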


We begin with a few lemmas.  The following is a type-B counterpart of Lemma 
\ref{lem:increase_typeA}.  As its proof is similar to the proof of 
Lemma \ref{lem:increase_typeA},  we omit a proof.  The only 
difference is that we only claim similar results for the intermediate permutations 
$w_{n+1}, w_n \ldots w_2$ and not $w_1$ as $r_1$ could be $s_0$ which flips the sign
of the first element.

\begin{lemma}
  \label{lem:increase_typeB}
Let $W = \BB_n$ 
and let $w \in W$ have intermediate 
elements $w_i$, where $1 \leq i \leq n+1$.  For $2 \leq i \leq n+1$, let the 
one-line notation of $w_i$ be $w_i = a_0a_1,a_2,\ldots,a_n$, where $a_0 = 0$.
Then, $a_0 < a_1 < a_2 \cdots < a_{i-1}$.  Let $\rd(w) = [r_n][r_{n-1}]\cdots[r_1]$
and let $w_1 = b_0 b_1, b_2, \ldots, b_n$ where $b_0 = 0$.  If 
$r_1 \not= s_0$, then $b_0 < b_1$.
\end{lemma}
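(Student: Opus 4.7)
The plan is to carry out a reverse induction on $i$, essentially mimicking the proof of Lemma~\ref{lem:increase_typeA} with the necessary adjustments for the extra generator $s_0$ in type B. The base case is $i = n+1$, where $w_{n+1} = \id$ has one-line notation $(0, 1, 2, \ldots, n)$, trivially sorted.

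For the inductive step, I would assume the statement for $i = k+1$, so that $w_{k+1} = a_0 a_1 \ldots a_n$ satisfies $0 = a_0 < a_1 < \cdots < a_k$, and analyze $w_k = w_{k+1} \cdot r_k$ by inspecting the explicit list of elements of $W^{\bkt{k}}$ given in Subsection~\ref{subsec:type-b}. The key observation I will justify is that right-multiplication by any $r_k \in W^{\bkt{k}}$ does one of two things: either $r_k = 1$ and $w_k = w_{k+1}$, or $r_k$ picks the entry at some position $j \in \{1, \ldots, k\}$, moves it to position $k$ (possibly flipping its sign, if $r_k$ contains $s_0$), and shifts the entries at positions $j+1, \ldots, k$ one step to the left. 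In either situation, the entries at positions $0, 1, \ldots, k-1$ of $w_k$ are precisely $a_0, a_1, \ldots, a_{j-1}, a_{j+1}, \ldots, a_k$ in this order, i.e., a subsequence of the sorted sequence $a_0 < a_1 < \cdots < a_k$, which remains sorted. Hence the claim holds for $i = k$.

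For the second assertion about $w_1$, I would use that $W^{\bkt{1}} = \{1, s_0\}$ has exactly two elements. Thus $r_1 \neq s_0$ forces $r_1 = 1$, which gives $w_1 = w_2$; the first part applied with $i = 2$ then yields $b_0 = 0 < b_1$.

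The only step that is genuinely different from the type-A proof is verifying that the possible sign flip does not break the sortedness argument. This is not a serious obstacle: the sign flip is applied only to the element that is relocated to position $k$, so the entries at positions $0, 1, \ldots, k-1$ retain both their values and their signs, and the subsequence argument above goes through unchanged. This is exactly why the claim excludes $w_1$ (when $r_1 = s_0$, the sign flip lands on position $1$, which is why the separate hypothesis $r_1 \neq s_0$ is needed there).
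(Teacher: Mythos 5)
Your proposal is correct and follows essentially the same route as the paper, which omits the proof precisely because it is the reverse induction of Lemma \ref{lem:increase_typeA} adapted to type B: the factor $r_k \in W^{\bkt{k}}$ extracts one entry from the sorted prefix, moves it to position $k$ (with a possible sign flip affecting only that relocated entry), and leaves the remaining prefix as a sorted subsequence. Your explicit check that the sign flip cannot disturb positions $0,\ldots,k-1$, and the observation that $r_1 \neq s_0$ forces $r_1 = 1$, are exactly the points the paper alludes to when it restricts the claim to $w_{n+1},\ldots,w_2$.
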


The following is a type-B counterpart of Lemma 
\ref{lem:drp_preserve_typeA}.

\begin{lemma}
  \label{lem:drp_preserve_typeB}
  For $\sigma \in \BB_n$, $\drpb(\sigma) = \drpb(g_B(\sigma))$.
\end{lemma}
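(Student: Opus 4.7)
My plan is to split the proof according to the two cases of the involution $g_B$, after handling the trivial case $g_B(\sigma) = \sigma$. If $\rd(\sigma)$ has no $\nml$ factor, $g_B$ performs the type-A-style modification at the smallest $t$ with $\ell(r_t) \geq 2$, appending $s_{t-2}$ to $r_t$. I would mimic the proof of Lemma \ref{lem:drp_preserve_typeA} in its entirety: track the intermediate permutations $w_j$ of $\sigma$ and $z_j$ of $g_B(\sigma)$, note $w_j = z_j$ for $j \geq t$, and verify $\drpb(w_j) = \drpb(z_j)$ by reverse induction for $j \leq t-1$, using Lemma \ref{lem:increase_typeB} in place of Lemma \ref{lem:increase_typeA}. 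The computation at each stage mirrors the type-A argument, shifted by one index, with only notational changes ($\drpb$ and $\DescSetB$ replacing $\drp$ and $\DescSet$).

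The more substantive case is when $\rd(\sigma)$ has $\nml$ factors and $g_B$ toggles $r_i$ between $u_i$ and $v_i$ at the largest such index $i$. Here the factors coincide for $j \neq i$, so $w_j = z_j$ for $j \geq i+1$. Writing $w_{i+1} = a_0, a_1, \ldots, a_n$ with $0 = a_0 < a_1 < \cdots < a_i$ by Lemma \ref{lem:increase_typeB}, a direct computation of the action of $u_i$ and $v_i$ on $w_{i+1}$ shows that $w_i$ and $z_i$ differ only at positions $i-1$ and $i$: one has the pair $(a_{i-1}, -a_i)$ and the other $(a_i, -a_{i-1})$. Drop contributions at the pairs $(i-2, i-1)$ and $(i-1, i)$ are identical (no descent, and drop $a_{i-1}+a_i$ respectively), so establishing the base case $\drpb(w_i) = \drpb(z_i)$ reduces to showing neither permutation has a descent at $(i, i+1)$.

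For this, I would use that the maximality of $i$ among $\nml$-factor indices forces $r_{i+1}$ to be non-$\nml$. A direct inspection of the non-$\nml$ elements of $W^{\bkt{i+1}}$ shows that whenever $\sigma_{i+1} < 0$, necessarily $r_{i+1} = s_k s_{k-1} \cdots s_0 s_1 \cdots s_i$ with $k \leq i-2$, so by Lemma \ref{lem:increase_typeB} applied to $w_{i+2}$, $|\sigma_{i+1}| = w_{i+2}(k+1) \leq w_{i+2}(i-1) < w_{i+2}(i) = a_{i-1}$. Hence $\sigma_{i+1} > -a_{i-1} > -a_i$, ruling out a descent at $(i, i+1)$ in either intermediate permutation and completing the base case.

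Finally I would extend to $\drpb(w_j) = \drpb(z_j)$ for all $j \leq i$ by reverse induction, maintaining the invariant that $w_j, z_j$ differ only at position $i$ (always $-a_i$ vs $-a_{i-1}$, since no $r_k$ with $k < i$ touches position $i$) and at a tracked position $p_j \leq i-1$ where the values have absolute values $a_{i-1}$ vs $a_i$ and a common sign $\epsilon_j$. Applying $r_{j-1}$ relocates the tracked position and may flip $\epsilon_j$ (when $s_0$ acts on the tracked value at position $1$), but the drop differences around the two differing positions telescope to zero by the structural symmetry between the $a_{i-1}$ and $a_i$ trajectories: the ``extra'' drop contributed at the tracked position cancels the ``missing'' drop at position $i$. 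The hardest part will be this inductive step, which requires patient sub-case analysis on the form of $r_{j-1}$ (whether it contains $s_0$, whether $p_{j-1}$ lands adjacent to $i$, how the sign interacts with descents) to verify cancellation in every sub-case.
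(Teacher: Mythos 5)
Your proposal follows essentially the same route as the paper: the same case split on $\nml$ versus non-$\nml$ factors, the same reduction to comparing the two chains of intermediate permutations by reverse induction using Lemma \ref{lem:increase_typeB}, and the same local drop-cancellation at the two positions where the chains differ. If anything, your base case is more careful than the paper's (the paper asserts $\DescSetB(w_t)=\DescSetB(w_{t+1})\cup\{t\}$ without addressing the comparison between positions $t$ and $t+1$, which your bound $\sigma_{i+1}>-a_{i-1}>-a_i$ via the maximality of the $\nml$ index settles), while your inductive step is left at roughly the same level of sketch as the paper's own treatment of the $k\ge 2$ cases.
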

\begin{proof}
 Let $T_n = \{ \psi \in \BB_n: g_B(\psi) = \psi\}$.  The lemma is
trivial if $\sigma \in T_n$ and hence we assume that $\sigma \not\in T_n$.
We break the proof into two cases depending on whether 
$\rd(\sigma)$ has $\nml$ factors.  
 
{\bf Case 1:} (When $\rd(\sigma)$ has no $\nml$ factors.)  
The argument is very similar to the type-A argument.  We start from
$w_{n+1} = \id$ and note that at the $i$-th step, the $(n-i)$th element 
from the left gets into its correct place with its correct sign.  After
we perform $w_{i+1} r_n$, we get $w_n$ and let $x_i$ be the $(n-i)$th
element from the left in $w_i$ (that is the element which came 
into its correct place in $w_i$).  We might get into trouble only when 
$x_i$ came from lesser than two positions to the left of $(n-i)$ AND
if $x_i$ is negative.  Otherwise, it can be seen that \eqref{eqn:trio}
holds and we are done.

Thus, for all $i \in [n]$, we have $r_i \not= u_i$ and $r_i \not= v_i$.  
Let $\psi = g_B(\sigma)$ 
and let $t$ be the smallest index such that $\ell(r_t) \geq 2$. 
Recall the intermediate permutations $\sigma_i$ and $\psi_i$ for
$1 \leq i \leq n+1$.  Let $\psi_i = \psi_{i+1} q_i$.
As in the earlier proof, we will show that $\drpb(\sigma_i) = \drpb(\psi_i)$ for 
$1 \leq i \leq n+1$.  For $t \leq i \leq n+1$, we have 
$\sigma_i = \psi_i$ and hence $\drpb(\sigma_i) = \drpb(\psi_i)$.
Recall that $\sigma_t = \sigma_{t+1} r_t$, that $\psi_t = \psi_{t+1} r_t s_{t-2}$
and that $\ell(r_j) \leq 1$ for $j < t$.  
After using the relation $s_i^2 = \id$, we see that either $r_{t-1} = 1$ and
$q_{t-1} = s_{t-2}$ or $r_{t-1}  = s_{t-2}$ and $q_{t-1} = 1$.  Assume that
$r_{t-1} = s_{t-2}$ and hence $\sigma_t = \psi_t = \psi_{t-1}$. 
Further, $r_t \in W^{\bkt{t}}$ and $r_t \not= u_t, v_t$.  Thus, 
$\drpb(\sigma_{t-1}) = \drpb(\psi_{t-1})$.  A similar argument shows that
$\drpb(\sigma_i) = \drpb( \psi_i)$ for $1 \leq i < t$.

{\bf Case 2:} (When $\rd(\sigma)$ has $\nml$ factors.)  We carry over
our notation from Case 1.  Let $t$ be the
largest index with $r_t$ being $\nml$ and without loss of generality 
let $r_t = u_t$ and $q_t = v_t$.  Let $w_{t+1} \cdot u_t = w_t$ and
$z_{t+1} v_t = z_t$.  Note that $n \geq t \geq 2$.

We first show that $\drpb(w_t) = \drpb(z_t)$.  We clearly have 
$w_{t+1} = z_{t+1}$.  Let $w_{t+1} = z_{t+1} = x_1, x_2, \ldots, x_n$.
Since $w_t = w_{t+1} u_t$ and $z_{t+1} = z_t v_t$, we have

$w_t   =  x_1, x_2 \ldots, x_{t-1},\ol{x_{t}}, x_{t+1}, \ldots, x_n \mbox{ and }
z_t   =  x_1, x_2 \ldots, x_{t},\ol{x_{t-1}}, x_{t+2}, \ldots, x_n.$

{We clearly have that the $\DescSetB(w_t)=\DescSetB(w_{t+1})\cup \{t\}$ and $\DescSetB(z_t)=\DescSetB(z_{t+1}) \cup \{t\}$. Thus, 
we get $\drpb(w_{t})=\drpb(w_{t+1})+x_{t-1}+x_{t}$ and $\drpb(z_{t})=\drpb(z_{t+1})+x_{t-1}+x_{t}$. As $w_{t+1} = z_{t+1}$, we
get that $\drpb(w_t) = \drpb(z_t)$ completing the proof.}
We next show that $\drpb(w_{t-1})=\drpb(z_{t-1})$.  When $k\ge 1$, 
we have $r_{t-k}=q_{t-k}$.  Without loss of generality, assume that 
$r_{t-1}\neq 1$. Therefore, we have
$w_{t-1}=x_1,x_2,\ldots,x_{t-1},\alpha, \ol{x_t}, \ldots, x_n$  and 
$z_{t-1}=x_1,x_2,\ldots,x_{t},\alpha, \ol{x_{t-1}}, \ldots, x_n$,
where $\alpha=y_i>0$ or $\alpha=\ol{y_i}$ for some $1\le i <t$.

{\bf Subcase (a) (When $\alpha=y_i$)}  Since $y_i < x_{t-1} < x_t$, we have a 
descent at position $t-2$.  Let $D= \sum_{i\in \DescSetB(w_t) \cap [t+1,n-1]} 
(x_i-x_{i+1})$. Then, we have 
    $\drpb(w_{t-1})=(x_{t-1}-\alpha)+(\alpha+x_t)+D$  and 
    $\drpb(z_{t-1})=(x_{t-1}-\alpha)+(\alpha+x_{t-1})+D.$ 
Clearly, both are equal.

{\bf Subcase (b) (When $\alpha=\ol{y_i}$)} We have $\ol{y_i}>\ol{x_{t-1}}>\ol{x_t}$ 
and therefore
    $\drpb(w_{t-1})=(x_{t-1}+y_i)+(x_t-y_i)+D$ and 
    $\drpb(z_{t-1})=(x_{t}+y_i)+(x_{t-1}-y_i)+D$.
Clearly,  both are equal.
An equivalent argument can be used to show that $\drpb(w_{t-k})=\drpb(z_{t-k})$ 
when $k\ge 2$ and is therefore, omitted.  The proof is complete.
\end{proof}





With these lemmas in place, we move on to the proof of 
Theorem \ref{thm:sgn_drp_typeB}.

\begin{proof}(Of Theorem \ref{thm:sgn_drp_typeB})
  Define $T_n$ to be the set of $\sigma \in \BB_n$ such that $\ird(\sigma)$ has no
  ascents.  These are permutations whose canonical reduced 
  word is of the form $s_{i_1}s_{i_2} \cdots s_{i_k}$ where $n > i_1 > i_2
  > \cdots > i_k \geq 0$.  As there are $n$ generators, such 
  permutations are clearly in bijection with subsets $L$ of $[n-1]_0$
  and thus $|T_n| = 2^n$.  Define $K_n = \BB_n - T_n$.  Consider 
  the involution $f$ defined in Section \ref{sec:invol}.  By 
  Lemmas \ref{lem:sign_rev_b}
  and \ref{lem:drp_preserve_typeB},
  $\sum_{\sigma \in K_n} (-1)^{\invb(\sigma)} q^{\drpb(\sigma)} = 0$ and thus
  $\sum_{\pi \in \BB_n} (-1)^{\invb(\sigma)} q^{\drpb(\sigma)} = 
  \sum_{\pi \in T_n} (-1)^{\invb(\sigma)} q^{\drpb(\sigma)}$.

  For $L \subseteq [n-1]_0$, $L = \{i_1, i_2, \ldots, i_k \}$, we write $\langle L \rangle$ 
  or $\{i_k, i_{k-1}, \ldots, i_1 \}_>$ for the sequence of elements of $L$ arranged 
  in decreasing 
  order.  We write $v = \prod_{i \in \langle L \rangle} s_i$ to denote the signed 
  permutation 
  $\sigma \in T_n$ whose canonical reduced word is $v$.  Clearly, the signed permutation 
  $\sigma$ with 
  canonical reduced word $\prod_{i \in \langle L \rangle} s_i$ has $\invb(\sigma) = 
  (-1)^{|L|}$.
  Further, if we cluster the elements of $L$ into consecutive elements, then it is easy to see
  that $\drpb(\sigma) = |L|$.  Thus,
  $$ \sum_{\sigma \in T_n} (-1)^{\invb(\sigma)} q^{\drpb(\sigma)} = \sum_{L \subseteq [n]} (-1)^{|L|} q^{|L|} 
  = (1-q)^n.$$
  The proof is complete.
\end{proof}

\section{Type D Coxeter groups}
\label{sec:type-d}

Recall that $\DD_n$ the type-D Weyl group, is the subset of signed permutations 
$\sigma \in \BB_n$ such that $|\Negs(\sigma)| \equiv 0$ (mod 2).  That is, 
it is the set of signed permutations with
an even number of negative elements. 
For $\sigma \in \DD_n$, Brenti \cite{brenti-q-eulerian-94} showed that 
its number
of inversions is given by $\invd(\sigma) = \invb(\sigma) - |\Negs(\sigma)|$.
When $n \geq 2$, for $\sigma = \sigma_1, \sigma_2, \ldots, \sigma_n 
\in \DD_n$, define $\sigma_0 = -\sigma_2$.  Define 
$\DescSetD(\sigma) = \{0 \leq i < n: \sigma_i > \sigma_{i+1} \}$.
Define $\drops_D(\sigma) = \sum_{i \in \DescSetD(\sigma)} 
(\sigma_{i+1} - \sigma_i)$ and 
define $\Sdropd_n(q)=\sum\limits_{\pi 
\in \DD_n} (-1)^{\invd(\pi)}q^{\drops_D(\pi)}.$
The main result of this Section is the following.
\begin{theorem}
\label{thm-typed}
     For $n\ge 2$, $\Sdropd_n(q)=(1-q^3)(1-q)^{n-1}$.
\end{theorem}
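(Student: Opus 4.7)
The plan is to prove the formula by induction on $n\ge 2$. For the base case $n=2$, direct enumeration of the four elements $1,2$; $\bar 1,\bar 2$; $2,1$; $\bar 2,\bar 1$ of $\DD_2$ (using the convention $\sigma_0=-\sigma_2$) yields the pairs $(\drops_D,\invd)=(0,0),(4,2),(1,1),(3,1)$ respectively, so $\Sdropd_2(q)=1-q-q^3+q^4=(1-q)(1-q^3)$, matching the formula.

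For the inductive step I plan to establish the recurrence
\[
\Sdropd_n(q)=(1-q)\,\Sdropd_{n-1}(q) \qquad (n\ge 3),
\]
which together with the base case immediately yields $\Sdropd_n(q)=(1-q^3)(1-q)^{n-1}$. My plan for proving the recurrence is to partition $\DD_n$ by the position and sign of the entry of absolute value $n$. The subset $\{\sigma\in\DD_n:\sigma_n=n\}$ is naturally in bijection with $\DD_{n-1}$ by truncation: because $\sigma_{n-1}<n$ there is no descent at position $n-1$, and the pair $(j,n)$ contributes $0$ to $\inva$ while leaving $|\Negs|$ unchanged, so this bijection preserves both $\drops_D$ and $\invd$; its contribution is therefore exactly $\Sdropd_{n-1}(q)$. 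I would then construct a sign-reversing and $\drops_D$-preserving involution on a large portion of the remaining elements of $\DD_n$ so that they cancel in pairs, and identify the unmatched survivors with $\DD_{n-1}$ up to a uniform $q$-shift, contributing $-q\,\Sdropd_{n-1}(q)$. A natural candidate is a refinement of right-multiplication by the Coxeter generator $s_{n-1}$, restricted to the order types of the triple $(\sigma_{n-2},\sigma_{n-1},\sigma_n)$ for which swapping $\sigma_{n-1}$ and $\sigma_n$ preserves the drop at position $n-2$.

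The main obstacle will be the type D boundary convention $\sigma_0:=-\sigma_2$, which couples the descent at position $0$ to the entry $\sigma_2$ and makes any manipulation near the beginning of $\sigma$ delicate. The tail involution sketched above avoids positions $1$ and $2$ as long as $n\ge 4$, so the small-rank case $n=3$ (where the tail and boundary overlap) is likely to require a separate base-case verification or a dedicated ad hoc argument tracking the interaction between $\sigma_0$ and $\sigma_2$. Once the involution is arranged to avoid disturbing the position-$0$ descent, verifying that the surviving elements contribute $-q\,\Sdropd_{n-1}(q)$ and assembling the full recurrence should follow from a bookkeeping computation analogous to the end of the proof of Lemma \ref{lem:drp_preserve_typeB}.
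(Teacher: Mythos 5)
Your overall skeleton---verify a small base case directly and then prove the recurrence $\Sdropd_n(q)=(1-q)\,\Sdropd_{n-1}(q)$---is exactly the paper's strategy (the paper takes $n=3$ as its base and proves the recurrence for $n\ge 4$ in Lemma \ref{lem:induct_type_d}), and your base-case computation for $n=2$ is correct, as is the truncation bijection for the elements with $\sigma_n=n$.

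The gap is in the heart of the inductive step, which you leave as a plan rather than a proof, and the specific candidate you name would not work. Right-multiplication by $s_{n-1}$ (swapping the last two positions) preserves $\drops_D$ only when $\sigma_{n-2}>\max(\sigma_{n-1},\sigma_n)$: writing $c=\sigma_{n-2}$ and $\{a,b\}$ for the last two entries with $a>b$, the two arrangements both contribute $c-b$ when $c>a$, but contribute $a-b$ versus $c-b$ (or $a-b$ versus $0$) when $c<a$. Hence the unmatched set is all of $\{\sigma:\sigma_{n-2}<\max(\sigma_{n-1},\sigma_n)\}$ --- in particular every $\sigma$ with $|\sigma_{n-1}|=n$ or $|\sigma_n|=n$ --- which is far larger than one copy of $\DD_{n-1}$ and is certainly not ``$\DD_{n-1}$ up to a uniform $q$-shift''; a further, nontrivial cancellation argument would still be required on it. The paper instead uses a value-based swap: exchange the letters $\pm(n-1)$ and $\pm n$ in their places, with four sign cases. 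This changes $\invd$ by exactly one and changes $\drops_D$ by $0$ or $1$ in a controlled way, and its survivors are the permutations ending in $n$ or $n-1$ (which give the factor $(1-q)$ by induction) together with those beginning $\overline{n-1},\overline{n}$. Disposing of this last boundary family is not bookkeeping: it requires a separate zero-sum lemma (Lemma \ref{thm-zerolemma}) asserting that $\sum(-1)^{\invd(\sigma)}q^{\zdrops(\sigma)}=0$ over both $\BB_{n-2}-\DD_{n-2}$ and $\DD_{n-2}$ for an auxiliary statistic $\zdrops$, itself proved by its own inductive bijection. Nothing in your outline anticipates this ingredient, so as written the recurrence --- and hence the theorem --- is not established.
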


First, note that the combinatorial definition of $\invd$ can be extended to 
any signed permutation in $\BB_n$ without any changes.
Define  $\zdrops(\pi)=\left\{
\begin{array}{ll}
\drops_B(\pi)  & \mbox{if } \pi_1 > 0, \\
\drops_B(\pi)-\pi_1 & \mbox{if } \pi_1 <0.
\end{array}
\right.$

\begin{lemma} \label{thm-zerolemma}
When $n\ge 2$, we have $\sum\limits_{\pi \in \BB_n-\DD_n} (-1)^{\invd(\pi)}q^{\zdrops(\pi)}
=0.$  Similarly, when $n \ge 2$, we have $\sum\limits_{\pi \in \DD_n} 
(-1)^{\invd(\pi)}q^{\zdrops(\pi)}=0.$
\end{lemma}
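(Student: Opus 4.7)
The plan is to construct, for each of the two identities, a sign-reversing involution that preserves $\zdrops$. Since the candidate operations below each negate an even number of entries of $\pi$, they preserve the parity of $|\Negs(\pi)|$, and hence automatically restrict to each of the classes $\DD_n$ and $\BB_n - \DD_n$ separately.

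For the base case $n = 2$, I would take $\phi(\pi_1, \pi_2) = (-\pi_2, -\pi_1)$, which is right multiplication by the type-D generator $s_0^D$. This map preserves the only interior drop term, namely $\max(0, \pi_1 - \pi_2) = \max(0, -\pi_2 + \pi_1)$, and flips $(-1)^{\invd}$ because $s_0^D$ has odd Coxeter length while $|\Negs|$ parity is preserved. Direct inspection of the eight elements of $\BB_2$ confirms that both sums vanish.

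For $n \geq 3$, I would generalise to the family of ``swap-and-negate'' operations $t_i(\pi) = (\pi_1, \dots, \pi_{i-1}, -\pi_{i+1}, -\pi_i, \pi_{i+2}, \dots, \pi_n)$ for $1 \leq i \leq n-1$. A direct count gives $\invb(t_i) = 4i - 1$, so right multiplication by $t_i$ flips $(-1)^{\invb}$ and, since $|\Negs|$ parity is preserved, also $(-1)^{\invd}$. The operation $t_i$ fixes the $j = i$ contribution to $\zdrops$ automatically; whether it preserves the $j = i-1$ and $j = i+1$ contributions reduces to the local identity
\[
\max(0, \pi_{i-1}-\pi_i) + \max(0, \pi_{i+1}-\pi_{i+2}) = \max(0, \pi_{i-1}+\pi_{i+1}) + \max(0, -\pi_i-\pi_{i+2}),
\]
which is symmetric under $\pi \leftrightarrow t_i(\pi)$ (direct substitution swaps the two sides). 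The proposed involution pairs each non-fixed $\pi$ with $t_i(\pi)$, where $i$ is the smallest index for which the above identity holds.

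The main obstacle is handling fixed points (those $\pi$ for which the local identity fails at every $i$) together with verifying that no smaller $j < i$ becomes a valid swap after $t_i$ is applied. For the latter, a short analysis of how the $j = i-1$ instance of the identity transforms under $t_i$ will suffice. For the former, I would decompose each fixed point $\pi$ by the position $k$ and sign $\epsilon$ of $\pm n$ in $\pi$, use the relation $(-1)^{\invd(\pi)} = (-1)^{n+k}(-1)^{\invd(\tilde\pi)}$ where $\tilde\pi \in \BB_{n-1}$ is $\pi$ with $\pm n$ removed, and match the contributions of $(k, +)$ and $(k+1, -)$ insertions to reduce the fixed-point sum to the lemma at rank $n-1$; induction on $n$ then closes the argument.
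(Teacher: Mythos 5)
Your overall strategy (a sign-reversing, $\zdrops$-preserving pairing plus an induction for whatever is left unpaired) is the same in spirit as the paper's, but the specific pairing you choose creates a fixed-point set that your plan cannot handle. The local identity you isolate is indeed the correct criterion for $t_i$ to preserve the interior drop terms, but it fails very often: for example, for every $\pi\in\SSS_n\subset\DD_n$ (all entries positive) the right side equals $\pi_{i-1}+\pi_{i+1}>0$ while the left side is at most $\max(0,\pi_{i-1}-\pi_i)+\max(0,\pi_{i+1}-\pi_{i+2})$, and one checks it fails at every $i$ (e.g.\ already for the identity permutation). So all of $\SSS_n$ sits inside your fixed-point set, and its contribution $\sum_{\pi\in\SSS_n}(-1)^{\inv(\pi)}q^{\drops(\pi)}=(1-q)^{n-1}$ is nonzero; the ``leftover'' is therefore not a small boundary phenomenon but carries essentially all of the cancellation that still has to be produced. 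Worse, the device you propose for the fixed points --- matching an insertion of $+n$ at position $k$ with an insertion of $-n$ at position $k+1$ --- changes $|\Negs(\pi)|$ by one, hence moves between $\DD_n$ and $\BB_n-\DD_n$. Since the lemma asserts that the two sums vanish \emph{separately}, any pairing that crosses between the two classes proves nothing about either sum; this contradicts the opening claim of your proposal that all your operations preserve the parity of $|\Negs|$. You would also need the fixed-point set to be closed under that insertion-matching, which is not established and is not plausible given how the set is defined (failure of a four-entry local identity at every position). Smaller but real gaps: the involutivity of ``apply $t_i$ at the smallest valid $i$'' requires checking indices $i-2$ as well as $i-1$ (the identity at $j$ involves positions $j-1,\dots,j+2$, so both can change validity), and the cases $i=1$ and $i=n-1$ need separate treatment because $\zdrops$ counts $-\pi_1$ with multiplicity two when $\pi_1<0$ and because $\pi_{n+1}$ does not exist.

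For comparison, the paper avoids all of this by pairing on the two \emph{values} $n$ and $n-1$ rather than on a pair of adjacent positions: it exchanges the letters $\pm n$ and $\pm(n-1)$ in the one-line notation (in four sign cases), which preserves $|\Negs|$ and hence each class, flips $(-1)^{\invd}$, and preserves $\zdrops$ except in a short explicit list of configurations (last letter equal to $n$ or $n-1$; first two letters $\overline{n-1},\overline{n}$). Those exceptional configurations are then expressed as $(1-q)$ times the rank-$(n-1)$ sum plus $(q^{2n}-q^{2n-1})$ times the rank-$(n-2)$ sum, and a direct check at $n=2$ closes the induction. If you want to salvage your approach, you would at minimum need a completely different mechanism for the fixed points, one that stays within each of $\DD_n$ and $\BB_n-\DD_n$.
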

\begin{proof}
We only prove the first identity and omit a proof of the second as the arguments are 
very similar.  
Let $A_1$ be the elements of $\BB_n-\DD_n$ that have $n-1$ or $\overline{n-1}$ 
occurring before $n$ or $\overline{n}$ and
let $A_2$ be the elements of $\BB_n - \DD_n$ in which $n$ or $\overline{n}$ 
occurs before $n-1$ or $\overline{n-1}$.
It is clear that $A_1 \sqcup A_2$ is a partition of $\BB_n-\DD_n$ and $|A_1|=|A_2|$. 
We define a bijection $g: A_1 \mapsto A_2$. 
We split into cases here to describe our map $g$:  If 
\begin{enumerate}
\item If $\pi = \sigma_1,n-1,\sigma_2,n,\sigma_3$ where $\sigma_i$, for $i=1,2,3$
are contiguous subwords, then set $g(\pi)=\sigma_1,n,\sigma_2,n-1,\sigma_3$.
In this case, one can check that $\invd(g(\pi))=\invd(\pi)+1$ and $\zdrops(g(\pi)) =
\left\{
\begin{array}{ll}
\zdrops(\pi)  & \mbox{if } \sigma_3 \neq \phi, \\
\zdrops(\pi)+1 & \mbox{if } \sigma_3 = \phi.
\end{array}
\right.$
\item  If $\pi = \sigma_1,\overline{n-1},\sigma_2,n,\sigma_3$, then set $g(\pi)
=\sigma_1,\overline{n},\sigma_2,n-1,\sigma_3$. In this case, it can be seen that 
$\invd(g(\pi))=\invd(\pi)+1$ and $\zdrops(g(\pi)) =
\left\{
\begin{array}{ll}
\zdrops(\pi)  & \mbox{if } \sigma_3 \neq \phi, \\
\zdrops(\pi)+1 & \mbox{if } \sigma_3 = \phi.
\end{array}
\right.$
\item If $\pi = \sigma_1,n-1,\sigma_2,\overline{n},\sigma_3$, then set 
$g(\pi)=\sigma_1,n,\sigma_2,\overline{n-1},\sigma_3$. In this case, it can be seen that
$\invd(g(\pi))=\invd(\pi)-1$ and  $\zdrops(g(\pi)) =\zdrops(\pi).$
\item If $\pi = \sigma_1,\overline{n-1},\sigma_2,\overline{n},\sigma_3$, then set 
$g(\pi)=\sigma_1,\overline{n},\sigma_2,\overline{n-1},\sigma_3$. In this case, 
we have $\invd(g(\pi))=\invd(\pi)+1$ and $\zdrops(g(\pi)) =
\left\{
\begin{array}{ll}
\zdrops(\pi)  & \mbox{if either }  \sigma_1  \mbox{or } \sigma_2 \neq \phi, \\
\zdrops(\pi)+1 & \mbox{if } \sigma_1, \sigma_2  = \phi.
\end{array}
\right.$
\end{enumerate}
Using $g$, we get $$\sum\limits_{\pi \in \BB_n-\DD_n} (-1)^{\invd(\pi)}
q^{\zdrops(\pi)}=\sum\limits_{w \in A_1} \Big[(-1)^{\invd(w)}q^{\zdrops(w)}+ 
(-1)^{\invd(g(w))}q^{\zdrops(g(w))}\Big].$$

On the right hand side, the only $w$ for which the two terms do not cancel out 
are when the last letter is $n$ or $n-1$ (by induction, the contribution of 
such permutations is clearly $(1-q)\Bigg( \sum\limits_{\pi \in \BB_{n-1}-\DD_{n-1}} 
 (-1)^{\invd(\pi)}q^{\zdrops(\pi)}\Bigg)$) and when the first two letters 
are $\overline{n-1},\overline{n}$ (again, inductively, such permutations
contribute $\sum\limits_{\pi \in \BB_{n-2}-\DD_{n-2}} 
(-1)^{\invd(\pi)}q^{\zdrops(\pi)}$).  Thus, we get 
\begin{eqnarray*}
& & \sum\limits_{\pi \in \BB_n-\DD_n} 
(-1)^{\invd(\pi)}q^{\zdrops(\pi)}\\
& & = (1-q)\sum\limits_{\pi \in \BB_{n-1}-\DD_{n-1}} (-1)^{\invd(\pi)}q^{\zdrops(\pi)}
+(q^{2n}-q^{2n-1})\sum\limits_{\pi \in \BB_{n-2}-\DD_{n-2}} 
(-1)^{\invd(\pi)}q^{\zdrops(\pi)}.
\end{eqnarray*}
When $n=2$, 
it is easy to see that $\sum\limits_{\pi \in \BB_2-\DD_2} (-1)^{\invd(\pi)}q^{\zdrops(\pi)}
=0.$
This completes the proof.
\end{proof}

\begin{lemma}
\label{lem:induct_type_d}
For natural numbers $n\ge 4$, we have 
    $\Sdropd_n(q)=(1-q)\Sdropd_{n-1}(q)$.
\end{lemma}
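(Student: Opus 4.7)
The plan is to prove the recurrence inductively by decomposing $\DD_n$ based on the position and sign of the largest letter $n$, reducing each block to a sum over $\DD_{n-1}$ or $\BB_{n-1}\setminus\DD_{n-1}$, and then invoking Lemma~\ref{thm-zerolemma} to kill the contributions from $\BB_{n-1}\setminus\DD_{n-1}$. This matches the paper's stated strategy for type D of proceeding inductively rather than via canonical reduced words.

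First, I would write
\[
\Sdropd_n(q)=\sum_{k=1}^{n}\sum_{\epsilon\in\{+,-\}}\sum_{\substack{\sigma\in\DD_n\\ \sigma_k=\epsilon n}}(-1)^{\invd(\sigma)}q^{\drops_D(\sigma)}.
\]
For each inner sum, deleting the entry $\epsilon n$ yields a signed permutation $\tau$ on $[n-1]$ that lies in $\DD_{n-1}$ when $\epsilon=+$ and in $\BB_{n-1}\setminus\DD_{n-1}$ when $\epsilon=-$. A direct local computation gives $\invd(\sigma)\equiv\invd(\tau)+(n-k)\pmod 2$ and an explicit formula for $\drops_D(\sigma)-\drops_D(\tau)$ in terms of $k$, $\tau_{k-1}$, $\tau_k$, and (when $k\in\{1,2\}$) of $\tau_1,\tau_2$, the latter because the type-D convention $\sigma_0=-\sigma_2$ interacts with the insertion only when $\pm n$ lands at positions $1$ or $2$.

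For the $\epsilon=+$ contributions, the block $k=n$ directly contributes $\Sdropd_{n-1}(q)$ since inserting $n$ at the end creates no new inversions or descents. For $1\le k\le n-1$, I would pair the insertion at position $k$ with the insertion at position $k+1$ via right multiplication by $s_k$: this changes $\invd$ by $\pm 1$ (yielding the sign reversal from the $(-1)^{n-k}$ factor) and $\drops_D$ by a quantity that can be controlled. The resulting telescoping sum collapses so that only the contribution coming from the pair $(k,k+1)=(n-1,n)$ with $\tau_{n-1}=n-1$ survives, producing exactly the desired $-q\,\Sdropd_{n-1}(q)$. The $\epsilon=-$ contributions, summed over $\tau\in\BB_{n-1}\setminus\DD_{n-1}$, are rewritten by expressing the weight $(-1)^{\invd(\sigma)}q^{\drops_D(\sigma)}$ as $(-1)^{\invd(\tau)}q^{\zdrops(\tau)}$ times a correction factor; then by Lemma~\ref{thm-zerolemma}, the sum $\sum_{\tau\in\BB_{n-1}\setminus\DD_{n-1}}(-1)^{\invd(\tau)}q^{\zdrops(\tau)}=0$ eliminates these terms entirely.

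The hard part will be handling the boundary contributions at $k\in\{1,2\}$, where the descent at position $0$ uses $\sigma_0=-\sigma_2$ and therefore changes nontrivially under insertion. These produce residual correction terms of the form $[\tau_1+\tau_2<0](-\tau_1-\tau_2)$ that do not telescope with the rest of the bulk and must be cancelled separately, likely by a sign-reversing involution internal to $\DD_{n-1}$ that flips the signs of $\tau_1,\tau_2$ in a controlled way. The hypothesis $n\geq 4$ enters precisely to guarantee that the boundary positions $\{1,2\}$ and the telescoping endpoints $\{n-1,n\}$ are disjoint, so that the bulk pairing via $s_k$ and the boundary cancellation invoking Lemma~\ref{thm-zerolemma} can be carried out independently. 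Once these cancellations are all in place, combining the surviving pieces gives $\Sdropd_n(q)=\Sdropd_{n-1}(q)-q\,\Sdropd_{n-1}(q)=(1-q)\Sdropd_{n-1}(q)$, completing the proof.
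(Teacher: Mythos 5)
Your decomposition by the position and sign of $\pm n$ is a reasonable starting point, and the $k=n$, $\epsilon=+$ block does contribute $\Sdropd_{n-1}(q)$ as you say. But the central cancellation mechanism fails: pairing the insertion of $n$ at position $k$ with the insertion at position $k+1$ (right multiplication by $s_k$) does not preserve $\drops_D$, so the paired terms do not cancel and the sum does not telescope down to the pair $(n-1,n)$. Concretely, for $\tau=1,2,3,4\in\DD_4$ and $n=5$, inserting $5$ at positions $3$ and $4$ gives $1,2,5,3,4$ and $1,2,3,5,4$, whose $\drops_D$ values are $2$ and $1$ respectively; these are interior positions, so the failure is not confined to the boundary cases $k\in\{1,2\}$ that you flag. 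In general, moving $n$ from position $k$ to $k+1$ changes its descent contribution from $n-\tau_k$ to $n-\tau_{k+1}$ (and toggles which old descent of $\tau$ gets destroyed), and there is no reason for the two drop values to agree. For the same reason the full alternating sum over $k$ for a fixed $\tau$ is not $(1-q)\,q^{\drops_D(\tau)}$ (for $\tau=1,2,3,4$ it is $1-q+q^2-q^3+q^4$), so whatever cancellation makes the lemma true must act across different $\tau$, which your plan does not supply. A second gap: to kill the $\epsilon=-$ block with Lemma \ref{thm-zerolemma} you would need $\drops_D(\sigma)-\zdrops(\tau)$ to depend only on $k$, but inserting $\overline{n}$ at position $k$ changes the drops by $\tau_{k-1}+n$ minus the contribution of any destroyed descent between $\tau_{k-1}$ and $\tau_k$, which depends on $\tau$; the sum therefore does not factor as a correction term times $\sum_\tau(-1)^{\invd(\tau)}q^{\zdrops(\tau)}$.

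The paper avoids both problems by pairing each element of $\DD_n$ with the one obtained by exchanging the positions of the two letters of absolute values $n$ and $n-1$ (with a prescribed sign pattern). Since every other letter has absolute value at most $n-2$, this exchange preserves $\drops_D$ except when one of $n,n-1$ is the final letter or when they occupy the first two positions; the first exception produces exactly the factor $(1-q)\Sdropd_{n-1}(q)$, and the second is disposed of by Lemma \ref{thm-zerolemma}. If you want to salvage an argument that moves the letter $n$ around, you need it to move past a letter whose size relative to everything else is controlled --- which is essentially what the $n\leftrightarrow(n-1)$ swap accomplishes and what moving $n$ past an arbitrary $\tau_k$ does not.
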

\begin{proof}
Define $A_1$ to be the subset of $\DD_n$ having permutations with $n-1$ or 
$\overline{n-1}$ occurring before $n$ or $\overline{n}$ and $A_2=\DD_n-A_1$.  
It is straightforward to see that these sets have the same cardinality. 
We describe a bijection $h: A_1 \mapsto A_2$. The map on $\DD_n$ that acts 
by $h$ on $A_1$ and $h^{-1}$ on $A_2$ is an involution.
We describe how to map these signed permutations after splitting them into
the following cases.
     
\begin{enumerate}
\item  If $\pi = \sigma_1,(n-1),\sigma_2,n,\sigma_3$, then set $h(\pi)=\sigma_1,n,
\sigma_2,n-1,\sigma_3$.  One can check that $\invd(h(\pi))=\invd(\pi)+1$ and 
    $\drops_D(g(\pi)) =
\left\{
\begin{array}{ll}
\drops_D(\pi)  & \mbox{if } \sigma_3 \neq \phi, \\
\drops_D(\pi)+1 & \mbox{if } \sigma_3 = \phi.
\end{array}
\right.$
\item  If $\pi = \sigma_1,\overline{n-1},\sigma_2,n,\sigma_3$, then 
set $h(\pi)=\sigma_1,\overline{n},\sigma_2,n-1,\sigma_3$.  One can
check that $\invd(h(\pi))=\invd(\pi)+1$ and $\drops_D(h(\pi)) =
\left\{
\begin{array}{ll}
\drops_D(\pi)  & \mbox{if } \sigma_3 \neq \phi, \\
\drops_D(\pi)+1 & \mbox{if } \sigma_3 = \phi.
\end{array}
\right.$
\item  If $\pi = \sigma_1,n-1,\sigma_2,\overline{n},\sigma_3$, then 
set $h(\pi)=\sigma_1,n,\sigma_2,\overline{n-1},\sigma_3$.  We have 
$\invd(h(\pi))=\invd(\pi)-1$ and $\drops_D(h(\pi))=\left\{
\begin{array}{ll}
\drops_D(\pi)  & \mbox{if } \sigma_1,\sigma_2 \neq \phi, \\
\drops_D(\pi)+1 & \mbox{if } \sigma_1,\sigma_2 = \phi.
\end{array}
\right.$
\item  If $\pi = \sigma_1,\overline{n-1},\sigma_2,\overline{n},\sigma_3$, 
then set $h(\pi)=\sigma_1,\overline{n},\sigma_2,\overline{n-1},\sigma_3$.
We have $\invd(h(\pi))=\invd(\pi)+1$ and $\drops_D(h(\pi))=\left\{
\begin{array}{ll}
\drops_D(\pi)  & \mbox{if } \sigma_1,\sigma_2 \neq \phi, \\
\drops_D(\pi)+1 & \mbox{if } \sigma_1,\sigma_2 = \phi.
\end{array}
\right.$
\end{enumerate}
Using $h$, we get
\begin{eqnarray*}
\Sdropd_n(q)&=&(1-q)\Sdropd_{n-1}(q)\\
& & + \sum\limits_{\lbrace \pi \in A_1:\ \pi_1=n-1,
\pi_2=\overline{n}\rbrace} \bigg( (-1)^{\invd(\pi)}q^{\drops_D(\pi)} 
+ (-1)^{\invd(h(\pi))}q^{\drops_D(h(\pi))}\bigg) \\
& & +\sum\limits_{\lbrace \pi \in A_1:\ \pi_1=\overline{n-1},
\pi_2=\overline{n}\rbrace} \bigg( (-1)^{\invd(\pi)}q^{\drops_D(\pi)} 
+ (-1)^{\invd(h(\pi))}q^{\drops_D(h(\pi))}\bigg)
\end{eqnarray*}

We show that both the second and third terms of the sum above are zero. 
If $\pi_1=n-1,\pi_2=\overline{n}$, then clearly,  $\pi_1 = n-1,\overline{n},
\sigma$ where $\sigma\in \BB_{n-2}-\DD_{n-2}$.  Thus,
\begin{eqnarray*}
&&\sum\limits_{\lbrace \pi \in A_1:\ \pi_1=n-1,\pi_2=\overline{n}\rbrace} 
\bigg( (-1)^{\invd(\pi)}q^{\drops_D(\pi)} + (-1)^{\invd(h(\pi))}
q^{\drops_D(h(\pi))}\bigg)\\
&=& \sum\limits_{\sigma\in \BB_{n-2}-\DD_{n-2}} \bigg( (-1)^{\invd(\pi)}
q^{2n+\zdrops(\sigma)} + (-1)^{\invd(g(\pi))}q^{2n-1+\zdrops(\sigma)}\bigg)\\
&=&((-1)^{2n-2}q^{2n}+(-1)^{2n-3}q^{2n-1})
\underbrace{\bigg(\sum\limits_{\sigma \in \BB_{n-2}-\DD_{n-2}} 
(-1)^{\invd(\sigma)}q^{\zdrops(\sigma)}\bigg)}_{=0\ 
(\mbox{from \eqref{thm-zerolemma}})}
\end{eqnarray*}
Similarly, if $\pi_1=\overline{n-1},\pi_2=\overline{n}$, then we have 
$\pi_1 = \overline{n-1},\overline{n},\sigma$ where $\sigma\in \DD_{n-2}$.  
Hence,
\begin{eqnarray*}
& &\sum\limits_{\lbrace \pi \in A_1:\ \pi_1=\overline{n-1},\pi_2=\overline{n}\rbrace} 
\bigg( (-1)^{\invd(\pi)}q^{\drops_D(\pi)} + (-1)^{\invd(h(\pi))}
q^{\drops_D(h(\pi))}\bigg)\\
&=& \sum\limits_{\sigma\in \DD_{n-2}} \bigg( (-1)^{\invd(\pi)}
q^{2n+\zdrops(\sigma)} + (-1)^{\invd(h(\pi))}q^{2n-1+\zdrops(\sigma)}\bigg)\\
&=&((-1)^{2n-2}q^{2n}+(-1)^{2n-3}q^{2n-1})\underbrace{\bigg(
\sum\limits_{\sigma \in \DD_{n-2}} (-1)^{\invd(\sigma)}q^{\zdrops(\sigma)}
\bigg)}_{=0\ (\mbox{from \eqref{thm-zerolemma}})}
\end{eqnarray*}
Our proof is complete.
    \end{proof}
\begin{proof}[Proof of Theorem \eqref{thm-typed}]
It is easy to see that $\Sdropd_3(q)=(1-q^3)(1-q)^2$.  Applying
Lemma \ref{lem:induct_type_d} completes the proof.
\end{proof}

\section{Two applications of our results}
We cover two applications of our results in this section.

\subsection{Mean and Variance}
\label{subsec:asymp_norm}

Graham and Diaconis showed that when one samples permutations from $\SSS_n$ 
at random, then the distribution of its Spearman's measure $D(\pi)$ 
is asymptotically normal.  Their result is the following.

\begin{theorem}[Graham and Diaconis]
\label{thm:mean_var_D-perm}
Let $\pi$ be a permutation chosen independently and uniformly in $\SSS_n$.  Then,
as $n \to \infty$, 
\begin{equation}
\label{eqn:mean_var_in_Sn}
E(D(\pi)) = \frac{1}{3}n^2 + O(n) \mbox{ and } \var(D(\pi)) = \frac{2}{45}n^3 + O(n^2)
\end{equation}
and $D(\pi)$ standardized by \eqref{eqn:mean_var_in_Sn} is asymptotically normal.
\end{theorem}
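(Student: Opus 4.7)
The plan is to decompose $D(\pi) = \sum_{i=1}^n X_i$ where $X_i = |\pi_i - i|$, handle the mean and variance by direct computation using the marginal and pairwise joint distributions of the coordinates of a uniform random $\pi \in \SSS_n$, and establish asymptotic normality by recognizing $D(\pi)$ as a linear statistic of the permutation matrix and invoking Hoeffding's combinatorial central limit theorem.

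For the mean, the fact that $\pi_i$ is marginally uniform on $[n]$ gives at once
\[
E[X_i] = \frac{1}{n}\sum_{j=1}^n |j-i| = \frac{1}{n}\left(\binom{i}{2} + \binom{n-i+1}{2}\right).
\]
Summing over $i$ and using $\sum_{i=1}^n \binom{i}{2} = \binom{n+1}{3}$ yields the exact value $E[D(\pi)] = \frac{2}{n}\binom{n+1}{3} = \frac{n^2-1}{3}$, which agrees with the stated asymptotic $\frac{1}{3}n^2 + O(n)$.

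For the variance, I would write $\var(D(\pi)) = \sum_i \var(X_i) + \sum_{i\neq j}\mathrm{Cov}(X_i,X_j)$. The joint law of $(\pi_i,\pi_j)$ for $i\neq j$ is uniform on ordered pairs of distinct elements of $[n]$, so
\[
E[X_i X_j] = \frac{1}{n(n-1)}\bigl(T_i T_j - U_{ij}\bigr),
\]
where $T_i = \sum_a |a-i|$ and $U_{ij} = \sum_a |a-i||a-j|$. Both $T_i$ and $U_{ij}$ admit explicit closed-form polynomial expressions in $n$, $i$, $j$ (by splitting the sums into the three intervals on which the absolute values become linear). Substituting, subtracting $E[X_i]E[X_j]$, and summing over $i,j$ reduces the variance to a polynomial in $n$ whose leading term can be extracted via the Riemann-sum approximation $\frac{1}{n^4}\sum_{i,j}(\cdots) \to \int_0^1\!\int_0^1 g(\alpha,\beta)\,d\alpha\,d\beta$ for an explicit piecewise-polynomial $g$; evaluating this double integral should yield exactly $\frac{2}{45}$ and hence $\var(D(\pi)) = \frac{2}{45}n^3 + O(n^2)$.

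The main obstacle is asymptotic normality, since the summands $X_i$ are not independent. The key observation is that $D(\pi) = \sum_{i=1}^n a_{i,\pi_i}$ with $a_{ij} = |i-j|$, putting the statistic exactly in the framework of Hoeffding's combinatorial CLT: for a uniform $\pi \in \SSS_n$ and any array $(a_{ij})$, the standardization of $\sum_i a_{i,\pi_i}$ is asymptotically $N(0,1)$ provided the doubly centered entries $a_{ij} - \bar a_{i\cdot} - \bar a_{\cdot j} + \bar a$ satisfy a Lindeberg-type smallness condition against the variance. Here $|a_{ij}| \le n$ while $\var(D(\pi)) = \Theta(n^3)$, so the squared maximal centered entry is $O(n^2) = o(\var(D(\pi)))$, and the Lindeberg hypothesis is readily verified. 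This yields convergence of $(D(\pi) - E[D(\pi)])/\sqrt{\var(D(\pi))}$ to standard normal. An alternative, quantitative route that I would fall back on if Hoeffding's conditions proved delicate is Stein's method via the exchangeable pair $(\pi,\pi')$ obtained from $\pi$ by a random transposition of two coordinates, which additionally produces a Berry-Esseen rate.
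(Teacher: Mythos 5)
This theorem is not proved in the paper at all: it is quoted from Diaconis and Graham's 1977 article (their Theorem 1), so there is no in-paper argument to compare against. Judged against the original source, your proposal is essentially a correct reconstruction of the classical proof. The mean computation is exact and right: $E[D(\pi)]=\frac{2}{n}\binom{n+1}{3}=\frac{n^2-1}{3}$. The variance scheme (expand $\var(D)=\sum_i\var(X_i)+\sum_{i\neq j}\mathrm{Cov}(X_i,X_j)$ using the uniform law of $(\pi_i,\pi_j)$ on ordered pairs of distinct values, with $E[X_iX_j]=\frac{1}{n(n-1)}(T_iT_j-U_{ij})$) is the standard route and does yield the exact value $\frac{(n+1)(2n^2+7)}{45}=\frac{2}{45}n^3+O(n^2)$. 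As a sanity check on your limiting-integral step: with $\tau(\alpha)=\int_0^1|x-\alpha|\,dx=\tfrac12-\alpha+\alpha^2$, $\sigma(\alpha)=\int_0^1(x-\alpha)^2\,dx$ and $\upsilon(\alpha,\beta)=\int_0^1|x-\alpha||x-\beta|\,dx$, one gets $\int_0^1(\sigma-\tau^2)\,d\alpha+\int_0^1\!\int_0^1(\tau(\alpha)\tau(\beta)-\upsilon(\alpha,\beta))\,d\alpha\,d\beta=\frac{1}{20}-\frac{1}{180}=\frac{2}{45}$, so the coefficient does come out. The normality step is also the right one and is in fact how Diaconis and Graham argue: $D(\pi)=\sum_i a_{i,\pi_i}$ with $a_{ij}=|i-j|$ is squarely in the scope of Hoeffding's combinatorial CLT, and the sufficient condition holds because $\max_{i,j}d_{ij}^2=O(n^2)$ while $\frac1n\sum_{i,j}d_{ij}^2=\Theta(n^3)$.

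Two small cautions on the variance sketch. First, the normalization in your Riemann-sum step should be $n^{-3}$, not $n^{-4}$: each covariance is $\Theta(n)$ and there are $\Theta(n^2)$ pairs, so $n^{-4}\sum_{i,j}\mathrm{Cov}(X_i,X_j)\to 0$ and the coefficient is lost at that scaling. Second, when you subtract $E[X_i]E[X_j]=T_iT_j/n^2$ from $T_iT_j/(n(n-1))$, the surviving term $T_iT_j/(n^2(n-1))$ is of the same order ($\Theta(n)$ per pair) as the $U_{ij}/(n(n-1))$ term, so you cannot replace $\frac{1}{n(n-1)}$ by $\frac{1}{n^2}$ before subtracting; your stated plan of keeping exact closed forms for $T_i$ and $U_{ij}$ handles this automatically, but it is the one place where a hasty asymptotic shortcut would give the wrong constant. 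Neither point is a gap in the method, only in the bookkeeping of the sketch.
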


We need the following lemma  which is implicit in the  second proof of
\cite[Theorem 1.2]{fulman-kim-lee-petersen_joint-distrib-descents-sign}.
For positive integers $n$, let 
$F_n(t)= \sum_{k=0}^n f_{n,k}t^k$ and $G_n(t)=\sum_{k=0}^n g_{n,k}t^k$ 
be sequences of polynomials with $f_{n,k}, g_{n,k} \geq 0$ for all $k$.
Further, for all positive integers $n$, let $F_n(1) > 0$ and $G_n(1) > 0$.
We will consider the random variable $X_f^n$ (and $X_g^n$) which takes 
the value $k$ with probability $ \displaystyle \frac{f_{n,k}}{F_n(1)}$ 
\Big(and
$\displaystyle \frac{g_{n,k}}{G_n(1)}$ respectively\Big).  The following
form of the Lemma appears in \cite[Lemma 8]{dey-siva-eulerian-clt-carlitz-ids}
and gives a condition for moments of $X_f^n$ and $X_g^n$ to be identical.

\begin{lemma}
\label{lem:connection_between_all_and_plus}
Let $F_n(t)$ and $G_n(t)$ be as described above.  Suppose we have 
a sequence of polynomials $H_n(t)$, a non-zero real number $\lambda$ and a 
sequence of positive integers $\ell_n$ such that 
\begin{equation}
\label{eqn:diff_by_1-t-power}
F_n(t)= \lambda G_n(t) \pm (1-t)^{\ell_n} H_n(t).
\end{equation}

Then, for $r<\ell_n$, the $r$-th moment of $X_f^n$
is identical to the $r$-th moment of $X_g^n$.
\end{lemma}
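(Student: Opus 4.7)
The plan is to express moments of $X_f^n$ and $X_g^n$ in terms of derivatives of $F_n(t)$ and $G_n(t)$ evaluated at $t=1$, and then use the fact that the ``error term'' $(1-t)^{\ell_n}H_n(t)$ in \eqref{eqn:diff_by_1-t-power} vanishes to sufficiently high order at $t=1$ to make low-order derivatives match.

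The first step is the standard observation that
\[
F_n^{(r)}(1) \;=\; \sum_{k\ge 0} k(k-1)\cdots(k-r+1)\, f_{n,k},
\]
and similarly for $G_n$. Dividing by $F_n(1)$ (respectively $G_n(1)$) we get the $r$-th factorial moment of $X_f^n$ (respectively $X_g^n$). Setting $t=1$ in \eqref{eqn:diff_by_1-t-power} immediately yields $F_n(1)=\lambda G_n(1)$, since the term $(1-t)^{\ell_n}H_n(t)$ vanishes there.

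Next, for $r < \ell_n$, I would differentiate \eqref{eqn:diff_by_1-t-power} exactly $r$ times and evaluate at $t=1$. By the Leibniz rule,
\[
\frac{d^r}{dt^r}\bigl[(1-t)^{\ell_n} H_n(t)\bigr]
\;=\; \sum_{j=0}^r \binom{r}{j}\, \frac{d^{\,j}}{dt^{\,j}}(1-t)^{\ell_n}\cdot H_n^{(r-j)}(t),
\]
and each factor $\frac{d^{\,j}}{dt^{\,j}}(1-t)^{\ell_n}$ still has a factor of $(1-t)^{\ell_n - j}$, which is positive in exponent whenever $j\le r<\ell_n$, hence vanishes at $t=1$. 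Therefore $F_n^{(r)}(1)=\lambda\, G_n^{(r)}(1)$, and combining with $F_n(1)=\lambda G_n(1)$ gives equality of the $r$-th factorial moments of $X_f^n$ and $X_g^n$ for every $r<\ell_n$.

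Finally, I would convert factorial moments into ordinary moments using Stirling numbers of the second kind,
\[
k^r \;=\; \sum_{j=0}^r S(r,j)\, k(k-1)\cdots(k-j+1).
\]
Taking expectations and using that every factorial moment of order $j\le r<\ell_n$ coincides for $X_f^n$ and $X_g^n$, we conclude $\mathbb{E}[(X_f^n)^r]=\mathbb{E}[(X_g^n)^r]$ as required. No step here is a serious obstacle; the only mild subtlety is making sure the Leibniz-rule argument is applied symmetrically to the $\pm$ sign in \eqref{eqn:diff_by_1-t-power}, but since the contribution from $(1-t)^{\ell_n}H_n(t)$ and all its low-order derivatives at $t=1$ is zero, the sign is irrelevant.
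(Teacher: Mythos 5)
Your proof is correct: differentiating \eqref{eqn:diff_by_1-t-power} $r$ times, noting that every Leibniz term retains a positive power of $(1-t)$ when $r<\ell_n$, deducing $F_n^{(r)}(1)=\lambda G_n^{(r)}(1)$, and passing from factorial to ordinary moments via Stirling numbers is exactly the standard argument. The paper itself does not prove this lemma but cites it from Dey--Sivasubramanian and Fulman--Kim--Lee--Petersen, where the proof proceeds along the same lines, so there is nothing to add.
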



As a corollary of Theorem \ref{thm:sgn_drp_univ}, Theorem \ref{thm:mean_var_D-perm}
and Lemma \ref{lem:connection_between_all_and_plus}, we get the following.  A 
scaling is necessary as for all $\pi \in \SSS_n$, we have 
$2 \cdot \drp(\pi) = f(D(\pi))$ where $f$ is the map used by Reifegerste in 
\cite{reifegerste-bi-incr}.

\begin{theorem}
\label{thm:mean_var_drp-even-perm}
Let $\pi$ be a permutation chosen independently and uniformly in $\AAA_n$.  Then,
as $n \to \infty$, 
\begin{equation}
\label{eqn:mean_var_in_An}
E(\drp(\pi)) = \frac{1}{2} \cdot \frac{1}{3}n^2 + O(n) \mbox{ and } \var(\drp(\pi)) = \frac{1}{4} \cdot \frac{2}{45}n^3 + O(n^2).
\end{equation}
\end{theorem}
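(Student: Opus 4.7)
The plan is to reduce the computation on $\AAA_n$ to the corresponding computation on $\SSS_n$, which is accessible via the combination of Theorem \ref{thm:biv_distrib} (equidistribution of $\drp$ with $\depth$) and Graham--Diaconis (Theorem \ref{thm:mean_var_D-perm}). The bridge between the two groups is the signed identity $\SgnDrp_n(q) = (1-q)^{n-1}$ from Corollary \ref{thm:sgn_drp_univ}, coupled with the moment-transfer lemma (Lemma \ref{lem:connection_between_all_and_plus}).

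First I would set up the relevant generating polynomials: let $F_n(q) = \sum_{\pi \in \SSS_n} q^{\drp(\pi)}$ and $G_n(q) = \sum_{\pi \in \AAA_n} q^{\drp(\pi)}$. Splitting $\SSS_n$ into its even and odd parts and invoking Corollary \ref{thm:sgn_drp_univ} gives
$$F_n(q) \;=\; 2\, G_n(q) \;-\; (1-q)^{n-1}.$$
In the notation of Lemma \ref{lem:connection_between_all_and_plus}, this is exactly the shape $F_n(q) = \lambda\, G_n(q) - (1-q)^{\ell_n} H_n(q)$ with $\lambda = 2$, $\ell_n = n-1$, and $H_n(q) \equiv 1$. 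Note also $F_n(1) = n! = 2\,G_n(1)$, so the normalising constants are compatible with $\lambda = 2$.

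Next I would apply Lemma \ref{lem:connection_between_all_and_plus} to conclude that for every $r < n-1$, the $r$-th moment of $\drp$ under the uniform measure on $\SSS_n$ equals the $r$-th moment under the uniform measure on $\AAA_n$. In particular, for $n \geq 4$ both the first and second moments coincide, hence $E_{\AAA_n}[\drp] = E_{\SSS_n}[\drp]$ and $\var_{\AAA_n}[\drp] = \var_{\SSS_n}[\drp]$ exactly (not just asymptotically).

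Finally I would read off the $\SSS_n$-side numerics. Theorem \ref{thm:biv_distrib} gives that $\drp$ and $\depth$ are equidistributed on $\SSS_n$; recalling the identity $D(\pi) = 2\,\depth(\pi)$ between Spearman's disarray and depth, we have $\drp$ on $\SSS_n$ distributed as $D/2$. Substituting into Graham--Diaconis (Theorem \ref{thm:mean_var_D-perm}) yields
$$E_{\SSS_n}[\drp] \;=\; \tfrac{1}{2}\Bigl(\tfrac{n^2}{3} + O(n)\Bigr), \qquad \var_{\SSS_n}[\drp] \;=\; \tfrac{1}{4}\Bigl(\tfrac{2 n^3}{45} + O(n^2)\Bigr),$$
which, combined with the previous step, is exactly the claimed statement. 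The substantive step is the moment-transfer; there is no real obstacle here, since the ``defect'' term $(1-q)^{n-1}$ kills differences in all moments of order below $n-1$, and for the mean and variance we only need $r \leq 2$.
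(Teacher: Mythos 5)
Your proposal is correct and follows essentially the same route as the paper: the paper likewise derives this as a corollary of Corollary \ref{thm:sgn_drp_univ}, the moment-transfer Lemma \ref{lem:connection_between_all_and_plus}, and the Graham--Diaconis Theorem \ref{thm:mean_var_D-perm}, with the factor of $\tfrac12$ coming from the identification of $\drp$ on $\SSS_n$ with $D/2$. Your write-up in fact makes explicit the decomposition $F_n(q)=2G_n(q)-(1-q)^{n-1}$ and the choice $\lambda=2$, $\ell_n=n-1$, $H_n\equiv 1$, which the paper leaves implicit.
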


\subsection{A complete matching on the Bruhat order}
\label{subsec:matching}

In this section, we show that our sign-reversing involution defined in Section \ref{sec:invol}
gives a matching in the Bruhat order on $\SSS_n$ and $\BB_n$.  The Bruhat order is a partial order on
permutations $\pi \in \SSS_n$.  There are several definitions of this partial order and for 
our purposes, the most suitable is the following.  Let $\pi, \psi \in \SSS_n$.  Define
$\psi \leq \pi$ if for some reduced word $s_{i_1} s_{i_2} \cdots s_{i_k}$ of $\pi$, there 
exists a subset $S \subseteq [k]$ such that $\prod_{j \in S} s_{i_j}$ is a reduced word
for $\psi$.


\begin{figure}[h]
  \centerline{\includegraphics[scale=0.45]{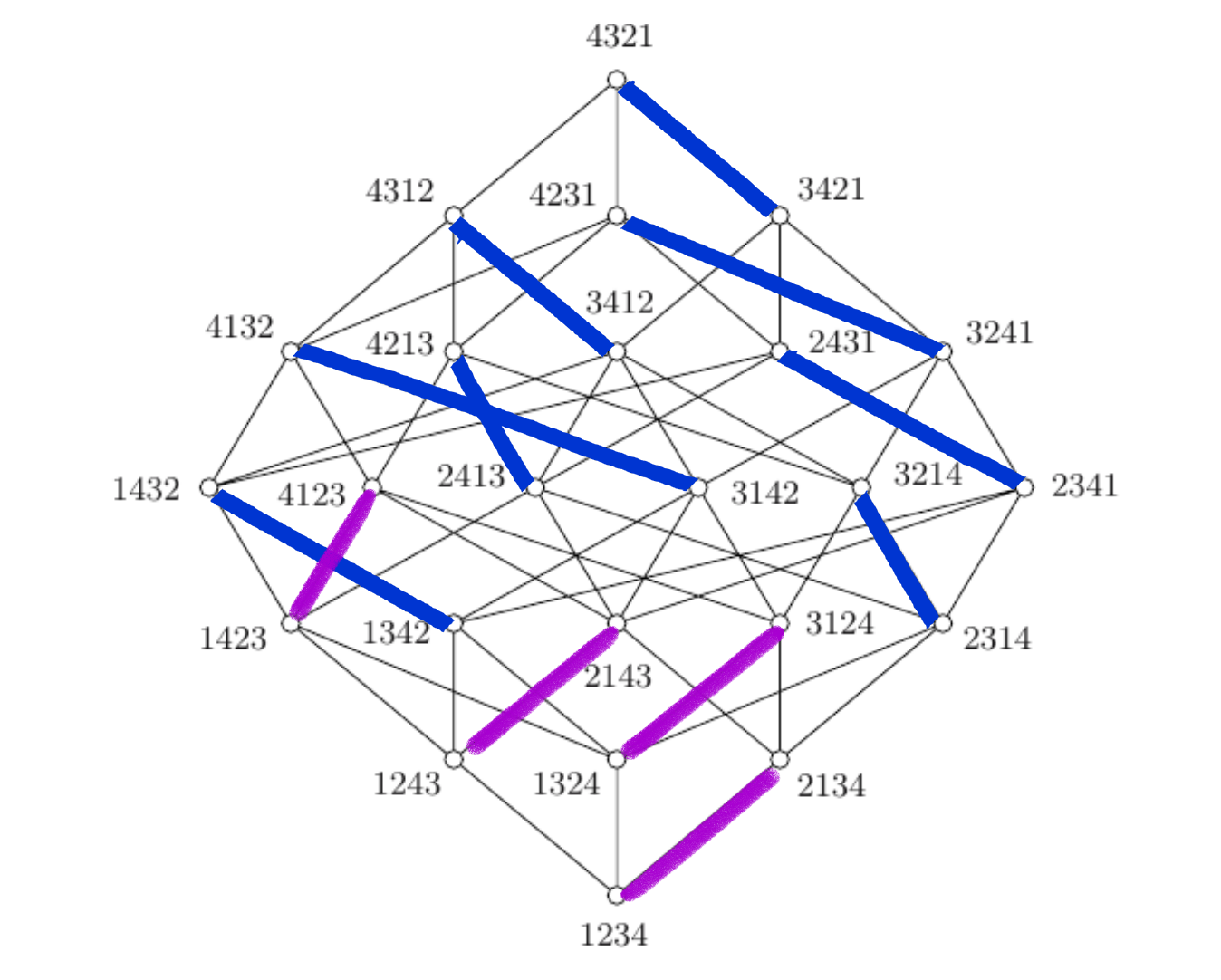}}
  \caption{The Bruhat order on $\SSS_4$, taken from page 31 of Bjorner and Brenti, \cite{bjorner-brenti}.}
  \label{fig:matching_bruhat}
\end{figure}

Let $\pi \in \SSS_n - T_n$ and let $f(\pi)$ be its image.  Since $\pi$ is 
an involution, assume $\inv(\pi) < \inv(f(\pi))$ (otherwise exchange the
roles of $\pi$ and $f(\pi)$).  By Lemma 
\ref{lem:canonical_preserve}, we have $\inv(\pi) = \inv(f(\pi)) - 1$.
Further, by Lemma \ref{lem:canonical_preserve} and 
the definition of the Bruhat order $\pi \leq f(\pi)$ as the canonical
reduced word for $\pi$ is a subword of the canonical reduced word
for $f(\pi)$.

Thus, we get a matching in the Bruhat order on elements of $\SSS_n - T_n$.
We have seen that the elements of $T_n$ are in bijection with subsets 
of $[n-1]$ where simple matchings exist (say between subsets {\it with}
and {\it without} the element $s_1$ in the reduced word).  This gives a matching on the 
set $T_n$ and combining these two, we get a matching on the Bruhat
order of $\SSS_n$.

We illustrate the matching for $\SSS_4$ in Figure \ref{fig:matching_bruhat}.
Both end-points of blue edges (seen better on a colour monitor)  
are elements of $\SSS_n - T_n$ while both end-points of purple edges
are elements in $T_n$.  A file with the canonical reduced word for 
each permutation in $\SSS_4$ and the matching is kept at the url 
http://www.math.iitb.ac.in/$\sim$krishnan/4.bijection-output

Recently Jones \cite{jones-matching-bruhat} gave an explicit matching 
on arbitrary intervals $[u,v]$ in the Bruhat order where $u \leq v$.
His matching depends on a reduced word for the element $v$ and when
specialised to the whole Bruhat order on $\SSS_n$ (i.e. the interval
$[\id,w]$ where $w$ is the unique permutation with largest $\inv$
value) is seen to be different from the one just described.


\section*{Acknowledgement}
The first author gratefully acknowledges support from the National Board 
for Higher Mathematics, India.


\begin{thebibliography}{10}

\bibitem{bagno-biagioli-novick-woo-depth-coxeter-groups}
{\sc Bagno, E., Biagioli, R., Novick, M., and Woo, A.}
\newblock Depth in classical coxeter groups.
\newblock {\em Journal of Algebraic Combinatorics 44\/} (2016), 645--676.

\bibitem{bjorner-brenti}
{\sc Bj{\"o}rner, A., and Brenti, F.}
\newblock {\em Combinatorics of Coxeter Groups}.
\newblock GTM 231, Springer Verlag, 2005.

\bibitem{height-two-types-Motzkin-brennan-knopfmacher}
{\sc Brennam, C., and Knopfmacher, A.}
\newblock The height of two types of generalised {M}otzkin paths.
\newblock {\em Journal of Statistical Planning and Inference 143\/}
  (2112--2120), 2013.

\bibitem{brenti-q-eulerian-94}
{\sc Brenti, F.}
\newblock $q$-{E}ulerian {P}olynomials {A}rising from {C}oxeter {G}roups.
\newblock {\em European Journal of Combinatorics 15\/} (1994), 417--441.

\bibitem{clarke-stein-zeng}
{\sc Clarke, R.~J., Steingr{\'i}msson, E., and Zeng, J.}
\newblock New {E}uler-{M}ahonian {S}tatistics on {P}ermutations and {W}ords.
\newblock {\em Advances in Applied Math 18\/} (1997), 237--270.

\bibitem{inv-foata-zeil-viennot}
{\sc de~M{\'e}dicis, A., and Viennot, X.~G.}
\newblock Moments of {{\(q\)}}-{Laguerre} polynomials and the
  {Foata}-{Zeilberger} bijection.
\newblock {\em Adv. Appl. Math. 15}, 3 (1994), 262--304.

\bibitem{dey-siva-eulerian-clt-carlitz-ids}
{\sc Dey, H.~K., and Sivasubramanian, S.}
\newblock Eulerian central limit theorems and {C}arlitz identities in positive
  elements of classical {W}eyl groups.
\newblock {\em Journal of Combinatorics 13 (3)\/} (2022), 333--356.

\bibitem{graham-diaconis-spearman}
{\sc Diaconis, P., and Graham, R.}
\newblock Spearman's footrule as a measure of disarray.
\newblock {\em Journal of the Royal Statistical Society Ser. B 39\/}
  (262--268), 1977.

\bibitem{cont-frac-flaj}
{\sc Flajolet, P.}
\newblock Combinatorial aspects of continued fractions. ({Reprint}).
\newblock {\em Discrete Math. 306}, 10-11 (2006), 992--1021.

\bibitem{foaza-denert-indeed}
{\sc Foata, D., and Zeilberger, D.}
\newblock Denert's permutation statistic is indeed euler-mahonian.
\newblock {\em Studies in Applied Math 83\/} (1990), 31--59.

\bibitem{fulman-kim-lee-petersen_joint-distrib-descents-sign}
{\sc Fulman, J., Kim, G.~B., Lee, S., and Petersen, T.~K.}
\newblock On the joint distribution of descents and signs of permutations.
\newblock {\em Electronic Journal of Combinatorics 28(3)\/} (2021), \#P3.37.

\bibitem{garsia-saga}
{\sc Garsia, A.}
\newblock {\em The {S}aga of {R}educed {F}actorizations of {E}lements of the
  {S}ymmetric {G}roup}, vol.~29.
\newblock Publications du LACIM, 2002.

\bibitem{goulden-jackson}
{\sc Goulden, I.~P., and Jackson, D.~M.}
\newblock {\em Combinatorial enumeration. {With} a foreword by {Gian}-{Carlo}
  {Rota}}, reprint of the 1983 original~ed.
\newblock Mineola, NY: Dover Publications, 2004.

\bibitem{humphreys-cox}
{\sc Humphreys, J.~E.}
\newblock {\em Reflection Groups and Coxeter Groups}.
\newblock Cambridge University Press, 1990.

\bibitem{jones-matching-bruhat}
{\sc Jones, B.~C.}
\newblock An {E}xplicit {D}erivation of the {M}{\"o}bius {F}unction for
  {B}ruhat {O}rder.
\newblock {\em Order 26\/} (2009), 319--330.

\bibitem{mantaci-signed}
{\sc Mantaci, R.},
\newblock Binomial coefficients and anti-exceedances of even permutations: {A} combinatorial proof
\newblock J. Comb. Theory, Ser. A 63, No. 2, 330--337 (1993)

\bibitem{knuth-taocp2}
{\sc Knuth, D.~E.}
\newblock {\em The Art of Computer Programming, vol 2}.
\newblock Pearson Education Asia, 2000.

\bibitem{lothaire-combin-words}
{\sc Lothaire, M.}
\newblock {\em Combinatorics on Words}.
\newblock Cambridge University Press, 1983.

\bibitem{Motzkin-paths-oste-jeugt}
{\sc Oste, R., and Van~der Jeugt, J.}
\newblock Motzkin paths, {M}otzkin polynomials and recurrence relations.
\newblock {\em Electronic Journal of Combinatorics 22(2)\/} (2015), \#P 2.8.

\bibitem{doron-petersen-email}
{\sc Petersen, K.}
\newblock A note for {D}oron: The generating function for total displacement
  ({S}pearman's footrule) and inversion number.
\newblock {\em Available at
  https://sites.math.rutgers.edu/$\sim$zeilberg/mamarim/mamarimhtml/noga12yleFeedback.pdf\/}.

\bibitem{petersen-guay-paquet-displacement}
{\sc Petersen, K., and Guay-Paquet, M.}
\newblock The generating function for total displacement.
\newblock {\em Electronic Journal of Combinatorics 21(3)\/} (2014), \# P3.37.

\bibitem{petersen-tenner-depth}
{\sc Petersen, K., and Tenner, B.~E.}
\newblock The depth of a permutation.
\newblock {\em Journal of Combinatorics 6\/} (2015), 145--178.

\bibitem{reifegerste-bi-incr}
{\sc Reifegerste, A.}
\newblock Excedances and descents of bi-increasing permutations.
\newblock {\em arxiv.org/abs/math/0212247 arxiv preprint\/} (2002).

\bibitem{siva-exc-det}
{\sc Sivasubramanian, S.}
\newblock Signed excedance enumeration via determinants.
\newblock {\em Advances in Applied Math 47\/} (2011), 783--794.

\bibitem{steingrimsson-indexed-perms}
{\sc Steingr{\'i}msson, E.}
\newblock {P}ermutation {S}tatistics of {I}ndexed {P}ermutations.
\newblock {\em European Journal of Combinatorics 15\/} (1994), 187--205.

\bibitem{stembridge-combin-aspects}
{\sc Stembridge, J.~R.}
\newblock Some {C}ombinatorial {A}spects of {R}educed words in {F}inite
  {C}oxeter {G}roups.
\newblock {\em Transactions of the AMS 349}, 4 (1997), 1285--1332.

\end{thebibliography}

\end{document}